\theoremstyle{plain}
\newtheorem{theorem}{Theorem}[section]
\newtheorem{proposition}[theorem]{Proposition}
\newtheorem{lemma}[theorem]{Lemma}
\newtheorem{corollary}[theorem]{Corollary}
\newtheorem{conjecture}[theorem]{Conjecture}
\newtheorem{problem}[theorem]{Problem}
\theoremstyle{definition} 
\newtheorem{definition}[theorem]{Definition}
\newtheorem{example}[theorem]{Example}
\newtheorem*{claim*}{Claim}
\theoremstyle{remark} 
\newtheorem{remark}[theorem]{Remark}
\numberwithin{equation}{section}
\newcommand{\Sc}{\mathrm{Sc}}
\newcommand{\Z}{\mathbb{Z}}
\newcommand{\R}{\mathbb{R}}
\newcommand{\dist}{\mathrm{dist}}
\newcommand{\ind}{\textup{Ind}}
\newcommand{\supp}{\mathrm{supp}}
\newcommand{\prop}{\mathrm{prop}}
\newcommand{\sph}{\mathbb{S}}
\newcommand{\Ric}{\mathrm{Ric}}
\newcommand{\ev}{ev}
\DeclareMathOperator{\inj}{Inj}
\newcommand{\interior}[1]{%
	{\kern0pt#1}^{\mathrm{\,o}}%
}
\let\save@mathaccent\mathaccent
\newcommand*\if@single[3]{%
	\setbox0\hbox{${\mathaccent"0362{#1}}^H$}%
	\setbox2\hbox{${\mathaccent"0362{\kern0pt#1}}^H$}%
	\ifdim\ht0=\ht2 #3\else #2\fi
}
\newcommand*\rel@kern[1]{\kern#1\dimexpr\macc@kerna}
\newcommand*\wideaccent[2]{\@ifnextchar^{{\wide@accent{#1}{#2}{0}}}{\wide@accent{#1}{#2}{1}}}
\newcommand*\wide@accent[3]{\if@single{#2}{\wide@accent@{#1}{#2}{#3}{1}}{\wide@accent@{#1}{#2}{#3}{2}}}
\newcommand*\wide@accent@[4]{%
	\begingroup
	\def\mathaccent##1##2{%
		\let\mathaccent\save@mathaccent
		\if#42 \let\macc@nucleus\first@char \fi
		\setbox\z@\hbox{$\macc@style{\macc@nucleus}_{}$}%
		\setbox\tw@\hbox{$\macc@style{\macc@nucleus}{}_{}$}%
		\dimen@\wd\tw@
		\advance\dimen@-\wd\z@
		\divide\dimen@ 3
		\@tempdima\wd\tw@
		\advance\@tempdima-\scriptspace
		\divide\@tempdima 10
		\advance\dimen@-\@tempdima
		\ifdim\dimen@>\z@ \dimen@0pt\fi
		\rel@kern{0.6}\kern-\dimen@
		\if#41
		#1{\rel@kern{-0.6}\kern\dimen@\macc@nucleus\rel@kern{0.4}\kern\dimen@}%
		\advance\dimen@0.4\dimexpr\macc@kerna
		\let\final@kern#3%
		\ifdim\dimen@<\z@ \let\final@kern1\fi
		\if\final@kern1 \kern-\dimen@\fi
		\else
		#1{\rel@kern{-0.6}\kern\dimen@#2}%
		\fi
	}%
	\macc@depth\@ne
	\let\math@bgroup\@empty \let\math@egroup\macc@set@skewchar
	\mathsurround\z@ \frozen@everymath{\mathgroup\macc@group\relax}%
	\macc@set@skewchar\relax
	\let\mathaccentV\macc@nested@a
	\if#41
	\macc@nested@a\relax111{#2}%
	\else
	\def\gobble@till@marker##1\endmarker{}%
	\futurelet\first@char\gobble@till@marker#2\endmarker
	\ifcat\noexpand\first@char A\else
	\def\first@char{}%
	\fi
	\macc@nested@a\relax111{\first@char}%
	\fi
	\endgroup
}
\newcommand*{\transpose}{%
	{\mathpalette\@transpose{}}%
}
\newcommand*{\@transpose}[2]{%
	\raisebox{\depth}{$\m@th#1\intercal$}%
}
\providecommand\@dotsep{5}
\renewcommand{\listoftodos}[1][\@todonotes@todolistname]{%
	\@starttoc{tdo}{#1}}
\begin{document}
\title[Sharp bottom spectrum and scalar curvature rigidity]{Sharp bottom spectrum and scalar curvature rigidity}

\author{Jinmin Wang}
\address[Jinmin Wang]{Institute of Mathematics, Chinese Academy of Sciences}
\email{jinmin@amss.ac.cn}
\thanks{}

\author{Bo Zhu}
\address[Bo Zhu]{ Yau Mathematical Sciences Center,  Tsinghua University}
\email{zhub@tsinghua.edu.cn}
\thanks{}

\begin{abstract}
 We establish a sharp upper bound for the bottom spectrum of the Beltrami Laplacian on universal covers of closed Riemannian manifolds with a scalar curvature lower bound. Moreover, we prove a scalar curvature rigidity theorem when this bound is achieved. Additionally, we prove a net characterization of scalar curvature for general complete noncompact Riemannian manifolds.
\end{abstract}
\maketitle

\section{Introduction}

Suppose that  $(X^n,g)$ is  a connected,  complete Riemannian manifold and $\Delta$ is the corresponding Beltrami Laplacian on $(X^n,g)$  defined as
\begin{equation*} \label{def: laplace} 
	\Delta f = \sum_{i=1}^n \left(\nabla_{e_i}\nabla_{e_j}  - \nabla_{\nabla_{e_i} e_j}\right) f = \frac{1}{\sqrt{\det(g)}} \frac{\partial}{\partial x^i}\left(\sqrt{\det(g)}g^{ij}\frac{\partial f }{\partial x^j}\right).
\end{equation*}
The $L^2$-bottom spectrum of $\Delta$  on $(X,g)$ is defined by (see  \cite{Cheng_eigenvalue}*{Section 4} or \cite{Li_geometric_analysis}*{Definition 6.3})
\begin{equation}\label{def: lambda_1}
	\lambda_1(X,g) = \inf \left\{ \frac{\int_X |\nabla f|^2}{ \int_X f^2}: \ f \in C_c^\infty(X), \ f \neq 0\right\}.
\end{equation}

Using the classical comparison theorem (see \cite{Cheng_eigenvalue}*{Theorem~4.2}), Shiu-Yuen Cheng first proved that if a Riemannian manifold $(X^n, g)$ satisfies $\Ric_g \geq -(n-1)$, then
\begin{equation} \label{eq: cheng_spectrum}
    \lambda_1(X, g) \leq \frac{(n-1)^2}{4}.
\end{equation}
Following this, Peter Li and Jiaping Wang pioneered the use of harmonic function theory to study the sharpness of the upper bound in \eqref{eq: cheng_spectrum} and to investigate splitting rigidity phenomena on complete, noncompact Riemannian manifolds with Ricci curvature bounded below by $-(n-1)$ (see \cites{Li_Wang_positive_spectrum_1,Li_Wang_positive_spectrum_2}).

A natural question is whether the estimate \eqref{eq: cheng_spectrum} can be generalized to manifolds with a scalar curvature lower bound.  Munteanu--Wang recently extended the sharp bottom spectrum estimate to three-dimensional manifolds with a negative scalar curvature lower bound, again employing harmonic function techniques.

\begin{theorem}[Munteanu--Wang, see \cite{MunteanuWang}*{Theorem 1.1}] \label{thm: mw_bottom_spectrum}
  Suppose that  $(X^3,g)$ is a complete, noncompact, three-dimensional Riemannian manifold with scalar curvature $\Sc_g \geq -6$. If $X$ satisfies either one of the following properties:
\begin{itemize}
	\item the second homology group $H_2(X, \mathbb{Z})$ contains no spherical class, or
	\item $X$ has finitely many ends and finite first Betti number $b_1(X) < \infty$,
\end{itemize}
then \[\lambda_1(X,g) \leq \frac{(n-1)^2}{4}.\]  
\end{theorem}
Note that Theorem~\ref{thm: mw_bottom_spectrum} applies to the universal cover of any closed, three-dimensional aspherical manifold. Moreover, Munteanu--Wang point out that Theorem~\ref{thm: mw_bottom_spectrum} does not hold in general without any topological or geometric constraints (see \cite{MunteanuWang}*{Example~1.2}).

Using the $L^2$-index theory of Dirac operators on spin manifolds, H. Davoux proved a similar result for a certain class of manifolds in all dimensions (see \cite{Davaux}*{Theorem A\& B}).
Nevertheless, the hypothesis in \cite{Davaux} of non-vanishing $L^2$-index (see also \cite{MR3275037}) is a strong constraint, excluding important examples such as  (asymptotically) hyperbolic manifolds.

In this article, we aim to further investigate sharp bounds for the bottom spectrum of the Laplacian on complete Riemannian manifolds of higher dimensions by employing the Dirac operator and higher index theory, rather than relying on harmonic function methods. Higher index theory \cites{BaumConnesHigson,MR0996437,Roe} extends the classical notions of the Fredholm index and $L^2$-index, and it is particularly well-suited to the analysis of noncompact manifolds. In the case of universal covers of closed Riemannian manifolds, higher index theory is related to the Novikov conjecture of the fundamental group, as shown by J. Rosenberg (see \cite{MR720934}).
Here, we prove our first theorem on the sharp bottom spectrum and the scalar rigidity theorem for the universal cover.

\begin{theorem} \label{thm: cocompact_rigidity}
Suppose that $(M^n, g)$ is a closed Riemannian manifold with fundamental group $\Gamma$ and scalar curvature $\Sc_g \geq -\kappa$ for some constant $\kappa \geq 0$. If
\begin{itemize}
\item $M$ is rationally essential, namely the fundamental class $[M]$ is non-zero in $H_*(B\Gamma,\mathbb Q)$,
\item $\widetilde M$ is spin and $\Gamma$ satisfies the Strong Novikov Conjecture \ref{conj:Novikov}.
\end{itemize}
then
\begin{equation}\label{1.3}
    \lambda_1(\widetilde{M}, \widetilde{g}) \leq \frac{n-1}{4n} \kappa,
\end{equation}
where $(\widetilde{M}, \widetilde{g})$ denotes the Riemannian universal cover of $(M, g)$. Moreover, if the equality holds, then $\Sc_g \equiv -\kappa$.
\end{theorem}

In the case where $M$ is an aspherical manifold, the assumptions that $M$ is rationally essential and $\widetilde{M}$ is spin are automatically satisfied. Moreover, the Strong Novikov Conjecture (or its rational version, see \cite{MR4268302}) has no known counterexamples so far. Therefore, it is expected that line \eqref{1.3} holds for all aspherical manifolds. 

\begin{remark}
   If $\kappa = 0$ in Theorem \ref{thm: cocompact_rigidity}, then by using Ricci flow or solving Laplace equation (see \cites{Kazdan,GromovLawson}),  the work of Gromov–Lawson and Kazdan implies that $(M, g)$ is Ricci-flat ($\Ric_g \equiv 0$). In this paper, we provide a completely new argument using the Dirac operator method only together with the unique continuation theorem (see Proposition~\ref{prop: Ricci_flat}).
\end{remark}

Theorem \ref{thm: cocompact_rigidity} is equivalently stated as
\begin{equation} \label{eq: scalar_quantitative}
    \inf_{p \in M} \Sc_g(p) \leq -\frac{4n}{n-1} \lambda_1(\widetilde{M}, \widetilde{g}).
\end{equation}
Thus, Theorem \ref{thm: cocompact_rigidity} provides a quantitative obstruction to the existence of a complete Riemannian metric with nonnegative scalar curvature. More concretely, we may assume that $M$ is an aspherical manifold—such as in the two model cases: the torus and a closed hyperbolic manifold (see Section \ref{sec:Novikov} for more general cases). Note that Theorem \ref{thm: cocompact_rigidity} applies to both of these model cases as follows (see item (\ref{list: Hilbert}) and (\ref{list: hyperbolic}) in the list below Conjecture \ref{conj:Novikov}).

\begin{itemize}
  \item The torus $(\mathbb{T}^n, g)$, for which $\lambda_1(\widetilde{\mathbb{T}^n}, \widetilde{g}) = 0$ for any complete Riemannian metric $g$.
  \item A closed hyperbolic manifold $(M^n, g)$, for which $\lambda_1(\widetilde{M^n}, \widetilde{g}) > 0$ for any complete Riemannian metric $g$. This reflects the fact that the hyperbolicity of the fundamental group $\Gamma$ obstructs the increase of scalar curvature.
\end{itemize}

\bigskip
Moreover, Theorem \ref{thm: cocompact_rigidity} has the following geometric corollary (see item (\ref{list: npc}) in the list after Conjecture \ref{conj:Novikov}).

\begin{corollary}
    Suppose that $(M^n,g)$ is a closed Riemannian manifold with non-positive sectional curvature $\sec_g \leq 0$. If the scalar curvature $\Sc_g \geq -\kappa$ for some constant $\kappa \geq  0$, then
    \begin{equation}
    \lambda_1(\widetilde{M}, \widetilde{g}) \leq \frac{n-1}{4n} \kappa.
    \end{equation}
    Moreover, if the equality holds, then $\Sc_g \equiv -\kappa$.
\end{corollary}

Note that Xiaodong Wang proves that if $\Ric_g \geq -(n-1)$, then
\[
    \lambda_1(\widetilde{M}, \widetilde{g}) \leq \frac{(n-1)^2}{4}.
\]
In particular, equality holds if and only if $(\widetilde{M}, \widetilde{g})$ is isometric to the standard hyperbolic space $(\mathbb{H}^n, g_{\mathbb{H}^n})$ (see \cite{Wang_eigenvalue}*{Theorem~1.4}). Hence, Theorem~\ref{thm: cocompact_rigidity} naturally leads to the following geometric rigidity problem for $\kappa > 0$.

\begin{problem} \label{conj: rigidity}
	Under the same assumptions as in Theorem~\ref{thm: cocompact_rigidity}, if
\[
    \lambda_1(\widetilde{M}, \widetilde{g}) = \frac{(n-1)^2}{4},
\]
then the universal cover $(\widetilde{M}, \widetilde{g})$ is isometric to the space form with constant sectional curvature $\sec_g = -1$.
\end{problem}
Recall that Munteanu--Wang proved Problem~\ref{conj: rigidity} for closed three-dimensional aspherical manifolds using the harmonic function theory techniques (see \cite{MunteanuWang}*{Theorem~1.3}). However, the argument in \cites{MunteanuWang} cannot approach the scalar curvature rigidity part for higher dimensions in Theorem \ref{thm: cocompact_rigidity}. Hence, Problem \ref{conj: rigidity} remains open in full generality.

\vspace{2mm}

Next, we will study the general complete noncompact Riemannian manifold case. Let us recall that a complete Riemannian manifold $(X^n, g)$ is said to be \emph{geometrically contractible} if there exists a function $R(r) \geq r$ for any $r \geq 0$ such that $B(p,r)$ is contractible in $B(p, R(r))$ for any $p \in X$. Note that the universal Riemannian cover of any closed, aspherical Riemannian manifold is geometrically contractible (see \cite{wang2024fillingradiusquantitativektheory}*{Example 2.6}). A complete Riemannian manifold $(X^n, g)$ is said to be of bounded geometry if the sectional curvature and its derivatives are uniformly bounded, and the injective radius has a uniformly lower bound, i.e., $|\nabla^\alpha\sec_g| \leq K_\alpha, \ 
\inj(M) \geq i > 0$ for any multi-index $\alpha$ and constants $K_{\alpha ,i}>0$.

Now we are ready to state our second theorem on complete, noncompact Riemannian manifolds.

\begin{theorem} \label{thm: geometrically_contracbile_spectrum}
Suppose that $(X^n,g)$ is a complete, geometrically contractible Riemannian manifold with bounded geometry and scalar curvature $\Sc_g \geq -\kappa$ for some constant $\kappa\geq 0$. If $(X^n,g)$ satisfies the Coarse Novikov Conjecture \ref{conj:coarseNovikov}, then
	\begin{equation}
		\lambda_1(X,g) \leq \frac{n-1}{4n}\kappa.
	\end{equation}
	Moreover, if $\lambda_1(X,g) = \frac{n-1}{4n}\kappa$, then for any $\delta>0$, the set $$\{p \in X:\Sc_g(p)\geq -\kappa+\delta\}$$ is not a net of $(\title{X},g)$.
\end{theorem}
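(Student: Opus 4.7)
I plan to argue by contradiction, using the Dirac operator together with a sharp integrated refined Kato-type inequality and higher (coarse) index theory. Assume $\lambda_1(X,g)>\tfrac{(n-1)^2}{4}\kappa$ and derive a contradiction. Since every ball in $X$ is contractible in a larger ball, $w_2(X)$ vanishes and $X$ admits a spin structure; let $D$ denote the Dirac operator on the spinor bundle $S\to X$. By the coarse Novikov conjecture combined with bounded geometry and geometric contractibility, the higher index $\ind(D)\in K_*(C^*(X))$ is nontrivial --- this is the K-theoretic input developed in Section~\ref{sec:pre}. The standard Roe vanishing principle then says: a uniform lower bound $D^2\ge c^2\cdot\id$ on $C_c^\infty(S)$ forces $\ind(D)=0$. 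Hence it suffices to produce such a uniform spectral gap for $D^2$.

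The analytic core of the argument is a sharp refined Kato-type inequality that interpolates between the standard Kato bound and the Bourguignon--Gauduchon refined bound $|\nabla|\psi||^2\le\tfrac{n-1}{n}|\nabla\psi|^2$ valid on harmonic spinors. Concretely, I would establish for every $\psi\in C_c^\infty(S)$ an integrated estimate of the form
\[
\int_X |\nabla|\psi||^2 \le \tfrac{n-1}{n}\int_X|\nabla\psi|^2 + C_n\int_X|D\psi|^2,
\]
with $C_n$ a dimensional constant, by exploiting the orthogonal Penrose decomposition $T^*X\otimes S=\ker(c)\oplus \im(c^*)$ and a careful algebraic Cauchy--Schwarz on each summand. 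Substituting the Lichnerowicz identity $\int|D\psi|^2=\int|\nabla\psi|^2+\int\tfrac{\Sc}{4}|\psi|^2$ to eliminate $\int|\nabla\psi|^2$ and inserting $\Sc_g\ge -n(n-1)\kappa$ yields
\[
\int|\nabla|\psi||^2 \le \Bigl(\tfrac{n-1}{n}+C_n\Bigr)\int|D\psi|^2 + \tfrac{(n-1)^2}{4}\kappa\int|\psi|^2.
\]
Combining with the Rayleigh bound $\lambda_1\int|\psi|^2\le\int|\nabla|\psi||^2$ then produces the positive lower bound $\bigl(\lambda_1-\tfrac{(n-1)^2}{4}\kappa\bigr)\int|\psi|^2\le \bigl(\tfrac{n-1}{n}+C_n\bigr)\int|D\psi|^2$, i.e.\ a uniform spectral gap for $D^2$, contradicting non-vanishing of $\ind(D)$.

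For the ``moreover'' conclusion, suppose $\lambda_1=\tfrac{(n-1)^2}{4}\kappa$ and assume for contradiction that $\{p:\Sc_g(p)\ge -\kappa+\delta\}$ is $R$-dense for some $\delta,R>0$. On every $R$-ball centered at a point of this net, the integrand $-\Sc_g/4$ is strictly below its worst value $n(n-1)\kappa/4$ by an amount depending on $\delta$. A partition-of-unity and bounded-geometry argument shows that every compactly supported test spinor $\psi$ carries a uniformly positive $L^2$-mass on the $R$-neighborhood of the net, which converts this local gain into a uniform improvement $\eta=\eta(\delta,R,n)>0$ in the Kato--Lichnerowicz chain above. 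The result is a uniform positive lower bound $D^2\ge\eta\cdot\id>0$ for the assumed equality value of $\lambda_1$, again contradicting non-vanishing of $\ind(D)$.

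The principal technical obstacle is proving the integrated refined Kato inequality with the correct $\tfrac{n-1}{n}$ coefficient on $\int|\nabla\psi|^2$; a naive $\epsilon$-Young application to the pointwise Penrose decomposition only yields coefficient close to $1$, which would degrade the sharp constant to $\tfrac{n(n-1)}{4}\kappa$ and miss the intended sharp bound. A secondary obstacle is translating the non-vanishing coarse index into the clean spectral-gap language used above, which relies on the bounded-geometry hypothesis and the functional calculus in the Roe $C^*$-algebra.
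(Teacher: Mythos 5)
Your argument for the inequality itself is essentially the paper's proof run in the contrapositive direction: non-vanishing of $\ind(D)\in K_*(C^*(X))$ (supplied by geometric contractibility plus the coarse Novikov conjecture) is traded against a spectral gap for $D$, and the gap is excluded by combining the Lichnerowicz formula with the refined Kato inequality and the Rayleigh quotient of $|\psi|$. Your worry about the ``principal technical obstacle'' is unfounded: you do not need the integrated inequality with the exact coefficient $\tfrac{n-1}{n}$ and no cross term. The pointwise estimate of Proposition \ref{prop:kato2}, $\big|\nabla|\psi|\big|^2\leq \tfrac{n-1}{n}|\nabla\psi|^2+c_n|D\psi|^2+c_n|D\psi||\nabla\psi|$, suffices: in your contrapositive scheme an $\epsilon'$-Young absorption of the cross term degrades $\tfrac{n-1}{n}$ to $\tfrac{n-1}{n}+\epsilon'$, which still contradicts a strict inequality $\lambda_1>\tfrac{(n-1)^2}{4}\kappa$ after choosing $\epsilon'$ small; equivalently, the paper applies the estimate to almost-harmonic spinors ($\|Ds\|\leq\varepsilon$, $\|s\|=1$, which exist because $D$ is not invertible) and lets $\varepsilon\to0$, so the cross term is harmless. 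Either way this half is sound and coincides with the paper's route.

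The ``moreover'' part, however, has a genuine gap. Your key claim --- that a partition-of-unity and bounded-geometry argument shows \emph{every} compactly supported test spinor carries a uniformly positive fraction of its $L^2$-mass on the region where $\Sc_g\geq-\kappa+\delta$ --- is false, and no soft covering argument can repair it. The scalar-curvature gain is available only on the set $X_\delta$ itself (and, via bounded geometry, on a small $a$-neighborhood $N_a(Y)$ of a discrete net $Y\subset X_\delta$, with $a$ controlled by $\delta$ and the curvature bounds, not by the covering radius $R$); the complement of $N_a(Y)$ still contains balls of radius comparable to $R$, and a test spinor supported in such a ball has zero mass on $N_a(Y)$, so no uniform improvement $\eta>0$ and no bound $D^2\geq\eta$ can follow for arbitrary compactly supported sections. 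This is precisely why the paper does not argue with arbitrary test spinors: it works with genuine low-energy spinors $s$ in the range of the spectral projection $P_{\varepsilon^2}$ (which exist since $D$ is not invertible) and invokes the quantitative unique continuation estimate of Proposition \ref{prop:uniqueContDirac} (Theorem \ref{thm:uniqueCont}, proved in Section \ref{sec: unique_cont.} via Carleman estimates and an interpolation/propagation argument along the net), which says exactly that such spinors cannot concentrate away from $N_a(Y)$: $\|s\|_{L^2(X)}\leq C\|s\|_{L^2(N_a(Y))}$ uniformly on the low spectral subspace. That estimate is the missing ingredient in your proposal; without it, the equality-case contradiction does not close.
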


Recall that  a subset $S$ in $X$ is  said to be  a \emph{net} of $X$ if there exists $r>0$ such that $N_r(S)=X$, where $N_r(S) = \{x \in X: \dist(x, S) < r\}$. Theorem \ref{thm: geometrically_contracbile_spectrum} is a geometric version of a more general theorem presented in Section \ref{sec: proof_of_main_thm}. We emphasize that the upper bound $\frac{n-1}{4n}\kappa$ is sharp since the standard hyperbolic space $(\mathbb H^n,g_{\mathbb H^n})$ has scalar curvature $-n(n-1)$ and $\lambda_1(\mathbb H^n,g_{\mathbb H^n})=\frac{(n-1)^2}{4}$. Moreover, the net characterization in Theorem \ref{thm: geometrically_contracbile_spectrum} on the general complete, noncompact Riemannian manifold, as equality holds, is the best expectation and cannot be further improved in general.

\begin{example}\label{example}
	Let $X=\R^{n-1}\times\R$ be a  complete Riemannian manifold equipped with the metric
	$$g = dt^2 + \cosh^{\frac{2}{a}}(at)g_{\mathbb{R}^{n-1}},$$
	where $(n-1)/2\leq a<n/2$.
	Note that $(X, g)$ is a geometrically contractible manifold with bounded geometry and satisfies the coarse Novikov conjecture (see \cite[Chapter 7]{willett2020higher}).
	A direct calculation shows that
	\begin{enumerate}
		\item $\displaystyle\Sc_g = -n(n-1) + (n-1)\frac{n-2a}{\cosh^2(t)}> -n(n-1)$;
		\item $\lambda_1(X, g) = \frac{(n-1)^2}{4}$ (see \cite{Li_geometric_analysis}*{Proposition 22.2}) since 
		\begin{align*}
			&\Delta (\cosh^{- \frac{n-1}{2a}}(at))\\
			=& \cosh^{-\frac{n-1}{a}}(at)\partial_t( \cosh^{\frac{n-1}{a}}(at)\partial_t (\cosh^{-\frac{n-1}{2a}}(at))) \\
			=& \cosh^{-\frac{n-1}{a}}(at)\partial_t(-\frac{
				n-1}{2}\cosh^{\frac{n-1}{2a}-1}(at) \sinh(at)
			)\\
			=&-\frac{n-1}{2}\big(\frac{n-1}{2}-a\big)\cosh^{-\frac{n-1}{2a}-2}(at)\sinh^2(at)-a\frac{n-1}{2}\cosh^{-\frac{n-1}{2a}}(at)\\
			\leq& 
			-\frac{(n-1)^2}{4}\cosh^{- \frac{n-1}{2a}}(at).
		\end{align*}
        
	\end{enumerate}

\end{example}

\vspace{2mm}

\subsection*{Outline and Ideas of the Proofs}

In this paper, we recast the classical problem of the bottom spectrum upper bound estimate in geometric analysis as an instance of the classical Novikov conjecture within the framework of noncommutative geometry. We first prove a sharp upper bound on the bottom spectrum in terms of the scalar curvature lower bounds, and then further establish a corresponding geometric rigidity result. This paper is organized as follows:

In Section \ref{sec:pre}, we provide the necessary background on higher index theory and then prove that a nonzero index of the Dirac operator implies that zero lies in its spectrum (see Proposition \ref{prop: nozero_index_zero_spectrum}), which guarantees the existence of approximately harmonic spinors.

In Section \ref{sec: Kato}, we establish the refined Kato inequality. We present an elementary proof and a more general quantitative version for non-compact manifolds (see Proposition \ref{prop:kato} and Proposition \ref{prop:kato2}). This inequality plays a crucial role in establishing the sharpness of the bottom spectrum.

In Section \ref{sec: proof_of_main_thm}, we first prove a sharp upper bound on the bottom spectrum in both the noncompact and cocompact cases using the harmonic spinor as a test function, and then derive several corollaries. The main difficulty is to prove the scalar curvature rigidity theorem in the cocompact case, and to prove the net characterization in the general noncompact setting. Our approach relies heavily on a quantitative \emph{unique continuation theorem} (Theorem \ref{thm:uniqueCont}). For the reader’s convenience and potential future applications in geometric contexts, we postpone its proof to Section \ref{sec: unique_cont.}. To the best of our knowledge, this part is the first to apply the classical unique continuation theorem in the context of geometric (scalar curvature) rigidity characterization.


In Section \ref{sec: unique_cont.}, we provide the proof of the quantitative unique continuation theorem.

Our approach relies on higher index theory on non-compact manifolds, where $L^2$-harmonic spinors generally do not exist. Instead, one only has approximately harmonic spinors, in the sense that zero lies in the spectrum of the Dirac operator. To handle this situation, we develop new analytic techniques, including a quantitative refined Kato inequality (Proposition \ref{prop:kato2}) and a quantitative unique continuation theorem (Theorem \ref{thm:uniqueCont}).

\subsection*{Acknowledgement} We would like to thank Shiqi Liu, Yuguang Shi, Jiaping Wang, Xiaodong Wang, Xingyu Zhu, and Guoliang Yu for their interest and discussion on this topic. We would also like to thank the anonymous referees for their helpful comments.

\section{Preliminaries on higher index theory}\label{sec:pre}
In this section, we will review the construction of the geometric $C^*$-algebras and the higher index theory (see the textbook \cite{willett2020higher}). The higher index theory \cites{BaumConnesHigson,MR0996437,Roe} is a far-reaching generalization of the classical Fredholm index, particularly for non-compact manifolds, and is a more refined index theory than the Atiyah--Singer index theorem.

\textbf{Assumption}: All Riemannian manifolds considered in this paper in the context of index theory are assumed to be of bounded geometry.

\subsection{Roe algebras and localization algebras}\label{subsec:Roealgebra}
We will first review the definitions of some geometric $C^*$-algebras. 

Suppose that  $X$ is a proper metric space, i.e., every closed ball is compact. Let $\Gamma$ be a discrete group acting on $X$ by isometry. In the following, we only consider the cases where either $\Gamma$ is trivial, or $\Gamma$ acts properly and cocompactly.
Let $C_0(X)$ be the $C^*$-algebra consisting of all complex-valued continuous functions on $X$ that vanish at infinity. A $\Gamma$-$X$-module is a separable Hilbert space $H_X$ equipped with a $*$-representation $\varphi$ of $C_0(X)$ and an action $\pi$ of $\Gamma$, which are compatible in the sense that
$$\pi(\gamma)\big(\varphi(f)\xi\big)=\varphi(f^\gamma)\big(\pi(\gamma)\xi\big),~\forall f\in C_0(X),~\gamma\in\Gamma,~\xi\in H_X,$$
where $f^\gamma(x)\coloneqq f(\gamma^{-1}x)$.

A $\Gamma$-$X$-module $H_X$ is called \emph{admissible} if
\begin{enumerate}
	\item $H_X$ is \emph{nondegenerate}, namely the representation $\varphi$ is nondegenerate,
	\item $H_X$ is standard, namely no nonzero function in $C_0(X)$ acts as a compact operator, and
	\item for any $x\in X$, the stabilizer group $\Gamma_x$ acts on $H_X$ regularly, in the sense that the action is isomorphic to the action of $\Gamma_x$ on $l^2(\Gamma_x)\otimes H$ for some infinite-dimensional Hilbert space $H$.
\end{enumerate}

For example, if $X$ is a $\Gamma$-cover of a closed manifold, then $L^2(X)$ is naturally a $\Gamma$-$X$-module.

\begin{definition}\label{def:prop-locallycompact}
	Let $H_X$ be an admissible $\Gamma$-$X$-module and $T$ be a bounded linear operator acting on $H_X$.
	\begin{enumerate}
		\item The \emph{propagation} of $T$ is defined by
		$$
		\prop(T)=\sup\{d(x,y) ~|~ (x,y)\in \text{supp}(T) \},
		$$
		where $\text{supp}(T)$ is the complement (in $X\times X$) of the set of points $(x,y) \in X\times X$ such that there exists $f_1,f_2\in C_0(X)$ such that $f_1Tf_2=0$ and $f_1(x)f_2(y)\neq 0$;
		\item $T$ is said to be \emph{locally compact} if both $fT$ and $Tf$ are compact for all $f\in C_0(X)$.
		\item $T$ is said to be $\Gamma$-\emph{equivariant} if $\gamma T=T\gamma$ for any $\gamma\in\Gamma$.
	\end{enumerate}
\end{definition}

\begin{definition}\label{def roe and localization}
	Let $H_X$ be a standard nondegenerate $\Gamma$-$X$-module and $B(H_X)$ the set of all bounded linear operators on $H_X$.
	\begin{enumerate}
		\item The \emph{equivariant $Roe$ algebra} of $X$, denoted by $C^*(X)^\Gamma$, is the $C^*$-algebra generated by all locally compact, equivariant operators with finite propagation in $B(H_X)$.
		\item The \emph{equivariant localization algebra} $C_L^*(X)^\Gamma$ is the $C^*$-algebra generated by all bounded and uniformly norm-continuous functions $f\colon [1,\infty)\to C^*(X)^\Gamma$ such that 	
		\[
		\prop(f(t))<\infty \text{ and }\prop(f(t))\to 0\  \text{~as~}  t \to \infty.
		\]
	\end{enumerate}
\end{definition}
The Roe algebras and localization algebras of $X$ are independent (up to isomorphisms) of the choice of nondegenerate standard $\Gamma$-$X$-modules  $H_X$ (see \cite[Proposition 3.7]{Yulocalization}). 

There is a natural evaluation map
$$\ev\colon C^*_L(X)^\Gamma\to C^*(X)$$
induced by evaluating a path at $t=1$. The induced map $\ev_*$ at the level of $K$-theory is also usually referred to as the index map or the assembly map.

We will omit $\Gamma$ if $\Gamma$ is trivial. In the case where $\Gamma$ acts on $X$ properly and cocompactly, we have that $C^*(X)^\Gamma\cong C^*_r(\Gamma)\otimes \mathcal K$, where $C^*_r(\Gamma)$ is the reduced group $C^*$-algebra of $\Gamma$ and $\mathcal K$ is the algebra of compact operators. In particular, we have $K_*(C^*(X)^\Gamma)\cong K_*(C^*_r(\Gamma))$.
\subsection{Higher index and local higher index}\label{subsec:higher_index}
In this subsection, we will recall the definition of the higher index and local higher index for Dirac operators. 

Let $\chi$ be a continuous function on $\R$. $\chi$ is said to be a \emph{normalizing function} if it is non-decreasing, odd (i.e. $\chi(-x) = -\chi(x)$) and  
\[ \lim_{x\to \pm \infty} \chi(x) = \pm 1. \]  

Suppose that  $X$ is a complete spin manifold. Let $D$  be the associated Dirac operator on $X$ acting on the spinor bundle of $X$, and  $\Gamma$  is a discrete group acting on $X$  isometrically. Moreover, let $H$ be the Hilbert space of the $L^2$-sections of the spinor bundle, which is an admissible $\Gamma$-$X$-module in the sense of Section \ref{subsec:Roealgebra}.
Let us first assume that $\dim X$ is even. In this case, the spinor bundle is naturally $\Z_2$-graded, and the Dirac operator $D$ is an odd  operator given by
$$D=\begin{pmatrix}
	0&D_+\\D_-&0
\end{pmatrix}.$$

Let $\chi$ be a normalizing function. Since  $\chi$ is an odd function, we see that $\chi(t^{-1}D)$ is also a self-adjoint odd operator for any $t>0$ given by
\begin{equation}\label{eq:chi}
	\chi(t^{-1} D)=\begin{pmatrix}
		0&U_{t,D}\\V_{t,D}&0
	\end{pmatrix}.
\end{equation}
Now, we set
$$W_{t,D}=\begin{pmatrix}
	1& U_{t,D}\\0&1
\end{pmatrix}
\begin{pmatrix}
	1&0\\-V_{t,D}&1
\end{pmatrix}
\begin{pmatrix}
	1&U_{t,D}\\0&1
\end{pmatrix}
\begin{pmatrix}
	0&-1\\1&0
\end{pmatrix},\ e_{1,1}=\begin{pmatrix}
	1&0\\0&0
\end{pmatrix}
$$
and
\begin{equation}\label{eq:PtD}
	\begin{split}
		P_{t,D} = & W_{t,D}e_{1,1}W_{t,D}^{-1}\\
		= & \begin{pmatrix}
			1-(1-U_{t,D}V_{t,D})^2 & (2-U_{t,D}V_{t,D})U_{t,D}(1-V_{t,D}U_{t,D})\\
			V_{t,D}(1-U_{t,D}V_{t,D})&(1-V_{t,D}U_{t,D})^2
		\end{pmatrix}.
	\end{split}
\end{equation}
The path $(P_{t,D})_{t\in[1,+\infty)}$ defines an element in  $M_2((C^*_L(X)^\Gamma)^+)$, and the difference $P_{t,D}-e_{1,1}$ lies in $M_2(C^*_L(X)^\Gamma)$.
\begin{definition}\label{def:lind}
	If $X$ is a  spin manifold of even dimension, then  
	\begin{itemize}
		\item 	the local higher index $\ind_L(D)$ of $D$ is defined to be  
		\[ \ind_L(D)\coloneqq [P_{t,D}]-[e_{1,1}] \in K_0(C^*_L(X)^\Gamma); \] 
		\item 
		the higher index $\ind(D)$ of $D$ is defined to be  
		\[ \ind(D) \coloneqq [P_{1,D}]-[e_{1,1}] \in  K_0(C^*(X)^\Gamma).\]
	\end{itemize}

\end{definition}

The constructions of the (local) higher index for the odd-dimensional spin manifold are as follows.
\begin{definition}\label{def:ind-odd}
If $X$ is a spin manifold of odd dimension, then
\begin{itemize}
	\item  the local higher index  $\ind_L(D)$ of $D$  is defined to be 
	\[ [e^{2\pi i\frac{\chi(t^{-1}D)+1}{2}}] \in K_1(C^*_L(X)^\Gamma);\]
	
	\item  the higher index  $\ind(D)$ of $D$ is defined to be  
	\[ [e^{2\pi i\frac{\chi(D)+1}{2}}] \in K_1(C^*(X)^\Gamma). \]
\end{itemize}

\end{definition}

Note that the higher index and the local higher index are independent of the choices of normalizing functions. The $K$-theory $K_\ast(C^*_L(X)^\Gamma)$ of the localization algebra $C^*_L(X)^\Gamma$ is naturally isomorphic to  the $\Gamma$-equivariant $K$-homology of $X$. Under this isomorphism, the local higher index of $D$ coincides with the $K$-homology class of $D$ (see \cite[Theorem 3.2]{Yulocalization} and \cite[Theorem 3.4]{QiaoRoe}).

\begin{proposition} \label{prop: nozero_index_zero_spectrum}
	Suppose that $(X,g)$ is a spin Riemannian manifold and $D$ is the associated Dirac operator acting on the spinor bundle.	If $\ind(D)\ne 0$ in $K_*(C^*(X)^\Gamma)$, then zero is in the spectrum of $D$.
\end{proposition}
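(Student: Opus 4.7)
The statement is essentially a restatement of the computation sketched just before it in the text, so my plan is to write the argument as a contrapositive: assume $0\notin\spec(D)$, and exhibit a normalizing function for which the defining cycle of $\ind(D)$ is trivial. The key point that needs to be verified is that the higher index is independent of the choice of normalizing function $\chi$, so that we may specialize to a convenient $\chi$ under the invertibility hypothesis.

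First I would recall that by the spectral theorem, if $0\notin\spec(D)$, then since $\spec(D)$ is a closed subset of $\mathbb R$ bounded away from $0$, the sign function
\[
\chi(x)=\begin{cases}1,&x\geq 0,\\ -1,&x<0\end{cases}
\]
is continuous on $\spec(D)$. This function is odd, non-decreasing, and has the correct limits, so it is a legitimate normalizing function for the purposes of computing $\ind(D)$. Moreover, $\chi(D)^2=1$ by the functional calculus, so in the even dimensional case the off-diagonal entries in \eqref{eq:chi} satisfy $U_{1,D}V_{1,D}=V_{1,D}U_{1,D}=1$.

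Next I would substitute these identities into the formula \eqref{eq:PtD} for $P_{1,D}$. The block matrix collapses: all the entries involving $1-U_{1,D}V_{1,D}$ or $1-V_{1,D}U_{1,D}$ vanish, and the remaining $(1,1)$ entry is just $1$, so $P_{1,D}=e_{1,1}$ on the nose. Hence $\ind(D)=[P_{1,D}]-[e_{1,1}]=0$ in $K_0(C^*(X)^\Gamma)$. In the odd dimensional case, $\chi(D)$ takes only the values $\pm 1$, so $(\chi(D)+1)/2$ is a projection and $e^{2\pi i(\chi(D)+1)/2}=1$ as a unitary on $H$. Thus $\ind(D)$ is represented by the identity, so it vanishes in $K_1(C^*(X)^\Gamma)$.

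The only subtlety, which I expect to be the main (minor) obstacle, is the independence of $\ind(D)$ from the choice of normalizing function. The standard argument is that any two normalizing functions $\chi_0,\chi_1$ differ by a function in $C_0(\mathbb R)$, and a straight-line homotopy $\chi_s=(1-s)\chi_0+s\chi_1$ between them produces a homotopy of the corresponding projections (or unitaries) whose entries depend continuously on $s$ in the appropriate $C^*$-algebra. I would simply cite this well-known fact (see \cite{willett2020higher}) rather than reprove it, which makes the whole proof very short.
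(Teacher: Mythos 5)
Your proposal is correct and follows essentially the same route as the paper: take the contrapositive, use the sign function (continuous on $\spec(D)$ when $0\notin\spec(D)$) as the normalizing function, deduce $\chi(D)^2=1$, and conclude $P_{1,D}=e_{1,1}$ in the even case and $e^{2\pi i(\chi(D)+1)/2}=1$ in the odd case, so $\ind(D)=0$. The independence of the higher index from the choice of normalizing function, which you cite, is exactly the fact the paper also invokes, so there is no gap.
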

\begin{proof}
Assume that $D$ is an invertible operator on a spin manifold $X$, namely $0$ is not in the spectrum of $D$. Then we choose the normalizing function $\chi$ to be the following function
$$\chi(x)=\begin{cases}
	1&x\geq 0,\\-1&x<0,
\end{cases}$$
which is continuous on the spectrum of $D$, and satisfies $\chi(D)^2=1$. Consequently, we reach that
\begin{itemize}
	\item if $X$ has even dimension, then $P_{1,D}=e_{1,1}$;
	\item if $X$ has odd dimension, then $$e^{2\pi i \frac{\chi(D)+1}{2}}=1.$$
\end{itemize}
 It follows that $\ind(D)=0$. 
\end{proof}


\subsection{Strong Novikov Conjecture and its coarse analogue}\label{sec:Novikov}
In this subsection, we recall the statement of the Strong Novikov Conjecture for groups and its coarse analogue for non-compact metric spaces.

Let $(X,d)$ be a discrete metric space with bounded geometry.
For each $d>0$, we define the Rips complex $P_d(X)$ to be the simplicial complex generated by points in $X$ such that $x_i,x_j\in X$ are in the same simplex if $d(x_i,x_j)\leq d$. By construction, $P_d(X)$ is finite-dimensional. We equip $P_d(X)$ with the spherical metric: for each simplex
$$\{\sum_{k=1}^m t_kx_{i_k}:\sum_{k=1}^n t_k=1,~t_k\geq 0\}.$$
Its metric is the one obtained from the sphere $\sph^m$ through the following map:
$$\sum_{k=1}^m t_kx_{i_k}\mapsto\Big(\frac{t_0}{\sqrt{\sum_{k=1}^n t_k^2}},\cdots,\frac{t_0}{\sqrt{\sum_{k=1}^n t_k^2}}\Big).$$
In particular, if $X=\Gamma$ is a finitely presented group, then $P_d(\Gamma)$ admits a natural $\Gamma$-action, which is proper and cocompact. We similarly define its Roe algebra and localization algebra. 
In particular, the Roe algebras (or the equivariant version) of $X$ and $P_d(X)$ are isomorphic (see \cite[Proposition 3.7]{Yulocalization}).
\begin{conjecture}[Coarse Novikov Conjecture]\label{conj:coarseNovikov}
	Let $X$ be a discrete metric space with bounded geometry. The coarse Novikov conjecture for $X$ states that the evaluation map 
	$$\ev\colon \lim_{d\to\infty} C^*_L(P_d(X))\to  C^*(X)$$
	induces an injection
	$$\ev_*\colon \lim_{d\to\infty}K_*( C^*_L(P_d(X)))\to K_*( C^*(X)).$$
\end{conjecture}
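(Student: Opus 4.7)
The coarse Novikov conjecture is an open problem in its stated generality, so any proof plan must restrict to a class of discrete bounded geometry spaces where known technology applies. The two confirmed regimes are (i) spaces of finite asymptotic dimension and (ii) spaces that coarsely embed into a Hilbert space, both due to Yu. The plan I would follow is to construct, in such a regime, a one-sided inverse to $\ev_*$ at the level of $K$-theory; since an injective homomorphism of abelian groups is exactly one admitting a left inverse, this suffices. In both cases the guiding idea is to build an auxiliary \emph{twisted} evaluation map whose assembly is known to be an isomorphism, and then transport information back across it via a Bott-type map.

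Concretely, in the case where $X$ coarsely embeds into a Hilbert space $H$ via a map $f$, I would first introduce the Higson--Kasparov $C^*$-algebra $\mathcal A=\mathcal A(H)$, built out of Clifford algebras of finite-dimensional affine subspaces of $H$, and then define a twisted Roe algebra $C^*(P_d(X),\mathcal A)$ and twisted localization algebra $C^*_L(P_d(X),\mathcal A)$ by tensoring the representation Hilbert space with $L^2(H,\mathcal A)$ and requiring operators to be supported near the graph of $f$ at scale $t$. Using $f$ and the infinite-dimensional Bott element, one obtains a Bott homomorphism
\begin{equation*}
\beta\colon \lim_{d\to\infty} K_*(C^*_L(P_d(X))) \to \lim_{d\to\infty} K_*(C^*_L(P_d(X),\mathcal A)),
\end{equation*}
together with an analogous map on the Roe side, fitting into a commutative square with $\ev_*$. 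The crucial step is to show, via a strong Mayer--Vietoris decomposition controlled by the bounded geometry of $X$, that the twisted evaluation map $\ev^{\mathcal A}_*$ is an isomorphism. Combined with the fact that $\beta$ is injective on the localization side (a local, essentially pointwise computation from infinite-dimensional Bott periodicity), this yields the desired left inverse $(\ev^{\mathcal A}_*)^{-1}\circ\beta$ of $\ev_*$, and hence injectivity. In the finite asymptotic dimension regime I would replace the infinite-dimensional Bott map by an induction on the asymptotic dimension, cutting $P_d(X)$ into pieces of bounded dimension using Yu's anti-\v Cech systems and applying a Mayer--Vietoris argument at each stage.

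The hard part will be the Mayer--Vietoris step for the twisted algebras. One must cover $P_d(X)$ by uniformly finite families of uniformly bounded pieces and verify that the resulting excision sequences for both Roe and localization algebras are compatible with each other and with the Bott homomorphism; compatibility with $\beta$ is delicate because $\beta$ depends non-locally on $f$, and one needs enough freedom in the coarse embedding to re-parameterize the Bott element on each Mayer--Vietoris piece without losing control of the scale $t\to\infty$. For $X$ outside both the finite asymptotic dimension class and the class of coarsely embeddable spaces—expanders containing a counterexample to the Baum--Connes conjecture being the prototypical obstruction—the conjecture is genuinely open, so the plan above is the state of the art rather than a route to a completely general proof.
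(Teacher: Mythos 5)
This statement is labeled a \emph{conjecture} in the paper, and the paper offers no proof of it; it is recorded only as background for the hypotheses of Theorem \ref{thm: geometrically_contracbile_spectrum} and its corollaries, together with a bibliography of the classes of spaces for which it is known (finite asymptotic dimension, coarse embeddability into Hilbert space, coarse equivalence to nonpositively curved manifolds). You correctly recognized that the coarse Novikov conjecture is open in the generality stated, and your sketch of the two known regimes matches the literature the paper cites: Yu's finite asymptotic dimension argument via anti-\v Cech systems and controlled Mayer--Vietoris, and the coarse embeddability argument via the Higson--Kasparov algebra $\mathcal A(H)$, twisted Roe and localization algebras, and the infinite-dimensional Bott map producing a left inverse to $\ev_*$. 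Your identification of the Mayer--Vietoris compatibility with the Bott homomorphism as the technical crux, and of expanders as the obstruction outside these classes, is accurate. Since there is no proof in the paper to compare against, the appropriate verdict is simply that you read the statement correctly: it is a hypothesis, not a result, and your survey of the literature on it is sound.
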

\begin{conjecture}[Strong Novikov Conjecture]\label{conj:Novikov}
	Let $\Gamma$ be a finitely presented group. The Strong Novikov Conjecture for $\Gamma$ states that the evaluation map 
	$$\ev\colon \lim_{d\to\infty} C^*_L(P_d(\Gamma)^\Gamma)\to  C^*_r(\Gamma)$$
	induces an injection 
	$$\ev_*\colon \lim_{d\to\infty}K_*( C^*_L(P_d(\Gamma))^\Gamma)\to K_*( C^*_r(\Gamma)).$$
\end{conjecture}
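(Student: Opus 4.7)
The plan is to construct a $K$-theoretic left inverse of the evaluation map via Kasparov's Dirac--dual Dirac framework, which is the standard route to injectivity results of this kind. First I would reinterpret the domain: the colimit $\lim_{d\to\infty}K_*(C^*_L(P_d(\Gamma))^\Gamma)$ is naturally isomorphic to the $\Gamma$-equivariant $K$-homology $K^\Gamma_*(\underline{E\Gamma})$ of the classifying space for proper actions, since the Rips complexes $P_d(\Gamma)$ exhaust $\underline{E\Gamma}$ and the $K$-theory of the equivariant localization algebra recovers equivariant $K$-homology \cite{Yulocalization,QiaoRoe}. Under this identification $\ev_*$ becomes the analytic (Baum--Connes) assembly map $\mu\colon K^\Gamma_*(\underline{E\Gamma})\to K_*(C^*_r(\Gamma))$, so the injectivity of $\ev_*$ is equivalent to the injectivity of $\mu$.

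Next, I would search for a proper isometric $\Gamma$-action on an auxiliary $\Gamma$-space $\mathcal E$ with good geometric properties (for instance a Hilbert space carrying a proper affine action, a $\mathrm{CAT}(0)$ cube complex, a bolic space, or a uniformly convex Banach space) and construct Kasparov's Dirac and dual Dirac elements
\[ [D]\in KK^\Gamma(C_0(\mathcal E),\C), \qquad [\beta]\in KK^\Gamma(\C,C_0(\mathcal E)), \]
together with the identity $[\beta]\otimes_{C_0(\mathcal E)}[D]=1_{\C}$ in $KK^\Gamma(\C,\C)$ (the Kasparov $\gamma$-element equals $1$). Applying Kasparov descent would then yield a $K$-theory map $K_*(C^*_r(\Gamma))\to K_*(C^*_r(\Gamma,C_0(\mathcal E)))$ whose composition with the image of $[D]$ provides a left inverse of $\mu$. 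A closely related and technically more flexible variant is the twisted localization/Roe-algebra method, where one deforms $C^*_L(P_d(\Gamma))^\Gamma$ to a \emph{twisted Roe algebra} built from $\mathcal E$ and proves injectivity by a Mayer--Vietoris plus five-lemma argument at the twisted level; this is the strategy pursued in \cite{Yucoarseembed}.

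The main obstacle, and the reason this statement appears as a conjecture rather than a theorem, is the construction of such an auxiliary space $\mathcal E$ and the corresponding dual Dirac element for an arbitrary finitely presented $\Gamma$. Every known success has had to engineer $\mathcal E$ from a specific geometric hypothesis: Higson--Kasparov produce it for a-T-menable groups via infinite-dimensional Bott periodicity on a Hilbert space with a proper affine $\Gamma$-action; Yu \cite{Yucoarseembed} extends this to groups coarsely embeddable into Hilbert space using the twisted Roe-algebra method above; Kasparov--Yu handle coarse embeddings into uniformly convex Banach spaces; and Lafforgue treats hyperbolic groups through Banach $KK$-theory. A concrete proof proposal for a given $\Gamma$ must therefore either verify that $\Gamma$ lies in one of these classes and then invoke the corresponding construction, or introduce a fundamentally new mechanism for producing $[\beta]$ beyond the Dirac--dual Dirac paradigm; the latter is what makes the conjecture in its stated generality a genuinely open problem rather than a routine exercise.
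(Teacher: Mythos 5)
This statement is not a theorem in the paper; it is a \emph{conjecture}, stated as Conjecture \ref{conj:Novikov} and used as a hypothesis elsewhere (e.g., in the rigidity results of Section \ref{sec: proof_of_main_thm}). The paper offers no proof and merely lists in Section \ref{sec:Novikov} the classes of groups for which the conjecture is known. You correctly recognize this: your ``proof proposal'' is really a survey of the Dirac--dual-Dirac / $\gamma$-element machinery and its twisted Roe-algebra variant, together with an honest identification of the genuine obstruction, namely the construction of a suitable proper $\Gamma$-space $\mathcal E$ and dual-Dirac element for an \emph{arbitrary} finitely presented $\Gamma$. This is consistent with the paper's treatment and with the state of the art; there is nothing to compare at the level of a proof, since none exists. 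One small caution: your reduction of $\ev_*$ to the Baum--Connes assembly map on $K^\Gamma_*(\underline{E\Gamma})$ is the right identification, but for the full Baum--Connes picture one needs maximal coefficients or the Kasparov descent at the reduced $C^*$-algebra level with care about exactness of $\Gamma$; as phrased for injectivity only (strong Novikov), the descent argument you sketch is standard and fine.
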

We briefly recall some known cases for the Coarse (Strong) Novikov Conjectures. 

The Strong Novikov Conjecture (or its rational version) holds for groups as follows:
\begin{enumerate}
	\item \label{list: npc} groups acting properly and isometrically on simply connected and non-positively curved manifolds(see \cite[Section 5.3]{GK88}).
	\item \label{list: hyperbolic}hyperbolic groups (see \cite[Theorem 6.8]{ConnesMoscovici}).
	\item \label{list: Hilbert}groups acting properly and isometrically on Hilbert spaces (see \cite[Theorem 1.1]{MR1821144}), for example, amenable groups.
	\item groups acting properly and isometrically on bolic spaces (see \cite[Theorem 1.1]{MR1998480}).
	\item \label{list: FSD}groups with finite asymptotic dimension (see \cite[Corollary 7.2]{Yu}),
	\item groups that coarsely embed into Hilbert spaces (see\cite[Corollary 1.2]{Yucoarseembed}).
    \item all subgroups of almost connected Lie groups (see \cite{MR2217050}).
    \item groups acting properly and isometrically on an admissible Hilbert-Hadamard space (see \cite[Theorem 1.1]{MR4268302}).
\end{enumerate}

The Coarse Novikov Conjecture holds for metric spaces as follows.
\begin{enumerate}
	\item metric spaces that are coarsely equivalent to non-positively curved manifolds (see \cite[Section 4]{Yulocalization}).
	\item metric spaces that have finite asymptotic dimension (see \cite[Theorem 1.1]{Yu}).
    \item metric spaces that have subexponential volume growth (see \cite[Theorem 5.1]{MR1791141}).
	\item metric spaces that coarsely embed into Hilbert spaces (see \cite[Theorem 1.1]{Yucoarseembed}).
    \item metric spaces that coarsely embed into Hilbert spaces (see \cite[Theorem 1.1]{MR3325537}).
\end{enumerate}

In particular, we remark that by the descent principle \cite[Theorem 8.4]{Roe}, the isomorphism of the map $\ev_*$ in Conjecture \ref{conj:coarseNovikov} for a group $\Gamma$ (as a metric space) implies the Strong Novikov Conjecture of the group $\Gamma$.

\section{A quantitative refined Kato inequality} \label{sec: Kato}

The Kato inequality for harmonic spinors is essential for us to obtain the sharpness of the bottom spectrum. In this subsection, we will provide a detailed proof of the Kato inequality, inspired by \cite[Section 4.1]{Davaux} and \cite{CGH00}, in order to present a slightly more general version (see Proposition \ref{prop:kato2}) for noncompact manifolds. For the simplicity of the notation, we only provide the proof for real spinors, while the complex case also holds with the same argument.

\begin{proposition}\label{prop:kato}
	Suppose that $(X^n,g)$ is a complete Riemannian manifold and $E$ is a vector bundle over $X$ equipped with a Clifford action of $TX$. Let $D$ be the Dirac operator
	$$D=\sum_{i=1}^n c(e_i)\nabla_{e_i},$$
	where $\nabla$ is a connection on $E$.
	If $\xi$ is a smooth section of $E$ such that $D\xi=0$, then
	$$\Big|\nabla|\xi|\Big|^2\leq \frac{n-1}{n}|\nabla\xi|^2.$$
\end{proposition}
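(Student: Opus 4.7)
The plan is to reduce the inequality to a pointwise statement on $U := \{\xi \neq 0\}$, where $|\xi|$ is smooth, and then extend across $\{\xi = 0\}$ by the standard regularization $|\xi|_\epsilon := \sqrt{|\xi|^2 + \epsilon^2}$ and $\epsilon \to 0^+$. This last step is routine; the heart of the matter is the pointwise computation.

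Fix $p \in U$. If $\nabla |\xi|(p) = 0$ the inequality is trivial, so assume otherwise. I would choose an orthonormal basis $\{e_1,\dots,e_n\}$ of $T_pX$ with $e_1$ parallel to $\nabla|\xi|(p)$, and set $\xi_i := \nabla_{e_i}\xi$, $\eta := \xi/|\xi|$ at $p$. Because $(\nabla|\xi|)(e_i) = \langle \xi_i,\eta\rangle$, the adapted choice forces $\langle \xi_i,\eta\rangle = 0$ for $i \geq 2$, so
$$|\nabla |\xi||^2 = a^2, \qquad a := \langle \xi_1, \eta\rangle.$$
The harmonic spinor hypothesis $D\xi = \sum_i c(e_i)\xi_i = 0$ enters next. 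Multiplying on the left by $c(e_1)$ and using $c(e_1)^2 = -\id$ gives
$$\xi_1 = \sum_{i \geq 2} c(e_1)c(e_i)\xi_i.$$
Pairing with $\eta$, and combining the skew-adjointness $c(e_j)^* = -c(e_j)$ with the anticommutation $c(e_1)c(e_i) = -c(e_i)c(e_1)$ for $i \geq 2$, I obtain
$$a = \sum_{i \geq 2}\langle \xi_i,\, c(e_1)c(e_i)\eta\rangle,$$
in which each $c(e_1)c(e_i)\eta$ is a \emph{unit} vector \emph{orthogonal} to $\eta$.

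The refined constant $(n-1)/n$ then comes from two applications of Cauchy--Schwarz. First, $(\sum_{i\geq 2} x_i)^2 \leq (n-1)\sum_{i\geq 2} x_i^2$ applied to $x_i = \langle \xi_i, c(e_1)c(e_i)\eta\rangle$ yields
$$a^2 \leq (n-1)\sum_{i \geq 2} |\xi_i|^2.$$
Second, $|\xi_1|^2 \geq \langle \xi_1, \eta\rangle^2 = a^2$. Adding,
$$|\nabla \xi|^2 = |\xi_1|^2 + \sum_{i\geq 2}|\xi_i|^2 \;\geq\; a^2 + \frac{a^2}{n-1} \;=\; \frac{n}{n-1}\,a^2,$$
which is exactly $|\nabla|\xi||^2 \leq \frac{n-1}{n}|\nabla\xi|^2$ on $U$.

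The main place for care is the Clifford-algebra bookkeeping that produces the identity for $a$: it is precisely the orthogonality $c(e_1)c(e_i)\eta \perp \eta$ (for $i \geq 2$), together with the fact that the Dirac constraint distributes $\xi_1$ across the $n-1$ remaining directions, that converts a naive Cauchy--Schwarz into the sharp factor $(n-1)/n$. Beyond this, the only delicate point is the sign convention in the Clifford relations; everything else (the choice of adapted frame, the final Cauchy--Schwarz estimate, and the $\epsilon$-smoothing across $\{\xi = 0\}$) is routine.
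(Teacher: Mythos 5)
Your argument is correct, but it takes a genuinely different route from the paper. You fix the point, adapt the orthonormal frame so that $e_1$ is parallel to $\nabla|\xi|$, use $D\xi=0$ to write $\xi_1=\sum_{i\geq 2}c(e_1)c(e_i)\xi_i$, and then conclude with the discrete Cauchy--Schwarz bound $a^2\leq (n-1)\sum_{i\geq 2}|\xi_i|^2$ together with $|\xi_1|^2\geq a^2$; adding the two gives exactly $\frac{n}{n-1}a^2\leq|\nabla\xi|^2$. The paper instead reduces the pointwise statement to a purely algebraic lemma about the kernel of the Clifford contraction $T\colon V\otimes W\to W$ and proves that lemma by induction on $n$, via the decomposition $s_{i,j}=s_i-\frac{1}{n-1}c(e_i)c(e_j)s_j$ and a summation over $i\neq j$. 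Your direct frame-adapted argument is shorter and avoids the induction entirely; the paper's algebraic formulation has the advantage of isolating a frame-independent statement about $\ker T$ (which also makes the generalization in its Proposition~\ref{prop:kato2}, with the $|D\xi|$ error terms, immediate, since $\nabla\xi$ is then only approximately in $\ker T$ -- your argument adapts to that too, but you would have to track the error through the $c(e_1)$ manipulation). Two small remarks: the identity for $a$ acquires a minus sign with the stated conventions ($\langle c(e_1)c(e_i)\xi_i,\eta\rangle=-\langle\xi_i,c(e_1)c(e_i)\eta\rangle$), which is harmless since only $a^2$ enters; and the orthogonality $c(e_1)c(e_i)\eta\perp\eta$, while true, is never actually used -- only the unit-norm bound $|c(e_1)c(e_i)\eta|=1$ and the projection estimate $|\xi_1|^2\geq\langle\xi_1,\eta\rangle^2$ are what produce the sharp constant. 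Both proofs implicitly assume the bundle metric and connection are compatible with the Clifford action, and both treat the zero set of $\xi$ in the same routine way.
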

\begin{proof}
	Let $\nabla\xi$ be the derivative of $\xi$ as a section in $TX\otimes E$.
	Note that
	$$\Big|\nabla|\xi|^2\Big|=2\Big|\nabla|\xi|\Big|\cdot|\xi|=2\Big|\langle\nabla\xi,\xi\rangle\Big|.$$
	Therefore, if $\xi(x)\ne 0$ for $x\in X$, then the desired inequality at $x$ is equivalent to
	$$\Big|\langle\nabla\xi(x),\xi(x)\rangle\Big|^2\leq \frac{n-1}{n}|\nabla\xi(x)|^2|\xi(x)|^2.$$
	Since $D\xi(x)=0$, we have $\nabla\xi(x)\in\ker T$, where $T$ is the endomorphism
	$$T\colon (TX\otimes E)_x\to E_x,~\psi\mapsto \sum_{i=1}^n c(e_i)\langle\psi,e_i\rangle.$$
	The inequality now follows from Lemma \ref{lemma:kato}, which will be proved later. Therefore, we have shown that
	$$\Big|\big(\nabla|\xi|\big)(x)\Big|^2\leq \frac{n-1}{n}|\big(\nabla\xi\big)(x)|^2$$
	for any $x\in\supp(\xi)$, namely the support of $\{x\in X:\xi(x)\ne 0\}$. The inequality holds trivially outside $\supp(\xi)$. This finishes the proof.
\end{proof}
\begin{lemma}\label{lemma:kato}
	Suppose that $V$ is a vector space and  $W$ is a vector space equipped with a $Cl(V)$-action. Let
	$$T\colon V\otimes W\to W,~\psi\mapsto \sum_{i=1}^n c(e_i)\langle\psi,e_i\rangle,$$
	then, for any $\psi\in \ker T$ and $\xi\in V$, we have
	$$\Big|\langle\psi,\xi\rangle\Big|^2\leq \frac{n-1}{n}|\psi|^2|\xi|^2.$$
\end{lemma}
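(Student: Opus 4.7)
\medskip

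\noindent\textbf{Proof plan for Lemma \ref{lemma:kato}.} The plan is to reduce the inequality to a single component estimate by choosing an orthonormal basis of $V$ adapted to $\xi$, then exploit the Clifford relation $c(e_1)^2=-1$ together with the fact that $c(e_1)$ is an isometry on $W$ (since the Clifford action is skew-adjoint). First I would assume, without loss of generality, that $|\xi|=1$, and extend $\xi$ to an orthonormal basis $\{e_1,\ldots,e_n\}$ of $V$ with $e_1=\xi$. Writing $\psi=\sum_{i=1}^n e_i\otimes\psi_i$ with $\psi_i\in W$, we have $|\psi|^2=\sum_i|\psi_i|^2$ and $\langle\psi,\xi\rangle=\psi_1$, so the target inequality becomes
\begin{equation*}
|\psi_1|^2\leq \frac{n-1}{n}\sum_{i=1}^n|\psi_i|^2.
\end{equation*}

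The Clifford identity $T\psi=0$ reads $\sum_{i=1}^n c(e_i)\psi_i=0$, hence
\begin{equation*}
c(e_1)\psi_1=-\sum_{i=2}^n c(e_i)\psi_i.
\end{equation*}
Because $c(e_1)^*c(e_1)=-c(e_1)^2=\mathrm{Id}$, the operator $c(e_1)$ preserves the norm on $W$, and likewise each $c(e_i)$ does. Taking norms squared on both sides and applying the elementary Cauchy--Schwarz bound $|\sum_{i=2}^n v_i|^2\leq (n-1)\sum_{i=2}^n|v_i|^2$ to the $n-1$ vectors $v_i=c(e_i)\psi_i\in W$ gives
\begin{equation*}
|\psi_1|^2=\Big|\sum_{i=2}^n c(e_i)\psi_i\Big|^2\leq (n-1)\sum_{i=2}^n|\psi_i|^2=(n-1)\bigl(|\psi|^2-|\psi_1|^2\bigr).
\end{equation*}
Rearranging yields $n|\psi_1|^2\leq (n-1)|\psi|^2$, which is exactly the desired estimate.

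The argument is essentially algebraic and there is no serious obstacle; the only point to watch is that the Clifford action must satisfy $c(e_i)^*c(e_i)=\mathrm{Id}$ on $W$, which follows from the standard conventions (skew-adjoint action with $c(e_i)^2=-|e_i|^2$) used throughout the paper. If one is proving the complex version, the same reasoning goes through verbatim, with $\mathbb{R}$ replaced by $\mathbb{C}$ and inner products taken to be Hermitian. Equality in the final estimate forces equality in Cauchy--Schwarz, meaning all vectors $c(e_i)\psi_i$ for $i\geq 2$ are parallel in $W$, a condition one can later use to analyze the rigidity case if needed.
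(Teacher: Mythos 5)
Your argument is correct for the statement as literally written, where $\xi\in V$ and $\langle\psi,\xi\rangle$ means contraction in the $V$-factor, and it is considerably more elementary than the paper's proof. However, the paper's own proof and the way the lemma is used in Proposition \ref{prop:kato} show that the intended statement has $\xi\in W$ (the ``$\xi\in V$'' in the lemma is a typo): writing $\psi=\sum_i e_i\otimes s_i$, the relevant quantity is $\langle\psi,\xi\rangle=\sum_i\langle s_i,\xi\rangle e_i\in V$, and the inequality actually needed is $\sum_i\langle s_i,\xi\rangle^2\leq \tfrac{n-1}{n}\big(\sum_i|s_i|^2\big)|\xi|^2$, because in the Kato inequality $\nabla|\xi|^2=2\langle\nabla\xi,\xi\rangle$ pairs $\nabla\xi$ with the spinor $\xi$ in the bundle factor. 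So your inequality, $|\langle\psi,v\rangle_V|^2\leq\tfrac{n-1}{n}|\psi|^2$ for a unit $v\in V$, is a genuinely different statement from the one the paper proves and uses. Fortunately it implies the intended one by a short duality step you did not include: set $u\coloneqq\sum_i\langle s_i,\xi\rangle e_i$; then $|u|^2=\big\langle\langle\psi,u\rangle_V,\xi\big\rangle\leq|\langle\psi,u\rangle_V|\,|\xi|\leq\sqrt{\tfrac{n-1}{n}}\,|u|\,|\psi|\,|\xi|$, whence $|u|^2\leq\tfrac{n-1}{n}|\psi|^2|\xi|^2$. (Equivalently, in Proposition \ref{prop:kato} one may take $v=\nabla|\xi|/\big|\nabla|\xi|\big|$ and use $\big|\nabla|\xi|\big|\leq|\nabla_v\xi|$ before applying your bound.) Without some such step, what you proved does not plug into the paper's application.

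As for the comparison: the paper proves the $W$-version directly by induction on $n$, introducing the auxiliary sections $s_{i,j}=s_i-\tfrac{1}{n-1}c(e_i)c(e_j)s_j$ and summing the induction hypothesis over $j$, which is noticeably heavier. Your route — isolate one Clifford term from $\sum_i c(e_i)\psi_i=0$, use that each $c(e_i)$ is an isometry (valid under the standard skew-adjointness convention, which the paper also uses, e.g.\ $\omega^*=-\omega$), and apply Cauchy--Schwarz over the remaining $n-1$ directions — combined with the duality step above gives a shorter and cleaner proof of the same sharp constant, with equality analysis coming out of the equality case of Cauchy--Schwarz.
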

\begin{proof}
	Let $\psi=\sum_{i=1}^n e_i\otimes s_i$. Since $\psi\in \ker T$, we have
	$$\sum_{i=1}^n c(e_i)s_i=0.$$
	Now it suffices to prove that
	\begin{equation}\label{eq:sixi}
		\sum_{i=1}^n\langle s_i,\xi\rangle^2\leq \frac{n-1}{n}\sum_{i=1}^n|s_i|^2|\xi|^2
	\end{equation}
	subject to the equality for $s_i$'s above.
	
	Assume that $|\xi|=1$. We will prove by induction on $n$. The case when $n=1$ is obvious, as $Ts=0$ implies that $s=0$. When $n=2$, we have
	$$c(e_1)s_1+c(e_2)s_2=0,$$
	namely $s_1=\omega s_2$, where $\omega=c(e_1)c(e_2)$. Observe that $\omega^*=-\omega$ and $\omega^2=-1$. Hence 
	\begin{equation}\label{eq:perp}
		|s_2|=|\omega s_2|\text{ and }s_2\perp\omega s_2.
	\end{equation} It follows that
	$$\sum_{i=1}^n\langle s_i,\xi\rangle^2=\langle\omega s_2,\xi\rangle^2+\langle s_2,\xi\rangle^2=|s_2|^2\Big|P(\xi)\Big|^2\leq \frac 1 2\Big(|s_1|^2+|s_2|^2\Big)|\xi|^2,$$
	where $P$ is the orthogonal projection from $W$ to $\text{span}\{s_2,\omega s_2\}$.
	This finishes the proof when $n=2$. In particular, the equality holds if and only if $\text{span}\{s_2,\omega s_2\}$ or $s_2=0$.
	
	Now we prove the inequality \eqref{eq:sixi} for $n\geq 3$ by induction. For any $i\ne j$, we define
	$$s_{i,j}=s_i-\frac{1}{n-1}c(e_i)c(e_j)s_j.$$
	Since $\sum_{i=1}^n c(e_i)s_i=0$, we see that $\sum_{i:i\ne j} c(e_i)s_{i,j}=0.$ By the induction hypothesis, we have
	$$\sum_{i:i\ne j}\langle s_{i,j},\xi\rangle^2\leq \frac{n-2}{n-1}\sum_{i:i\ne j}|s_{i,j}|^2.$$
	Take summation for $j=1,2,\ldots,n$ and obtain that
	\begin{equation}\label{eq:ij}
		\sum_{i\ne j}\langle s_{i,j},\xi\rangle^2\leq \frac{n-2}{n-1}\sum_{i\ne j}|s_{i,j}|^2
	\end{equation}
	
	We first compute the right-hand side of line \eqref{eq:ij}. Note that
	\begin{align*}
		|s_{i,j}|^2=&\Big|s_i-\frac{1}{n-1}c(e_i)c(e_j)s_j\Big|^2=|s_i|^2+\frac{1}{(n-1)^2}|s_j|^2-\frac{2}{n-1}\langle s_i,c(e_i)c(e_j)s_j\rangle\\
		=&|s_i|^2+\frac{1}{(n-1)^2}|s_j|^2+\frac{2}{n-1}\langle c(e_i)s_i,c(e_j)s_j\rangle
	\end{align*}
	Sum for $i$ with $i\ne j$,
	$$\sum_{i:i\ne j}|s_{i,j}|^2=\sum_{i:i\ne j}|s_i|^2+\frac{1}{n-1}|s_j|^2-\frac{2}{n-1}|s_j|^2=\sum_{i:i\ne j}|s_i|^2-\frac{1}{n-1}|s_j|^2.$$
	It follows that
	\begin{equation}\label{eq:rhs-ij}
		\begin{split}
			\sum_{i\ne j}|s_{i,j}|^2=&\sum_{i\ne j}|s_i|^2-\frac{1}{n-1}\sum_{j=1}^n|s_j|^2=(n-1)\sum_{i=1}^n|s_i|^2-\frac{1}{n-1}\sum_{j=1}^n|s_j|^2\\
			=&\frac{n(n-2)}{n-1}\sum_{i=1}^n|s_i|^2.
		\end{split}
	\end{equation}
	
	Now we estimate the left-hand side of line \eqref{eq:ij}. Fix $i\in\{1,\ldots,n\}$. We have
	$$\sum_{j:i\ne j}\langle s_{i,j},\xi\rangle^2=\sum_{j:i\ne j}\langle s_i-\frac{1}{n-1}c(e_i)c(e_j)s_j,\xi\rangle^2$$ 
	By the Cauchy--Schwarz inequality, we have
	\begin{equation}
		\begin{split}
			&(n-1)\sum_{j:i\ne j}\langle s_i-\frac{1}{n-1}c(e_i)c(e_j)s_j,\xi\rangle^2\\
			\geq&\Big(\sum_{j:i\ne j}\langle s_i-\frac{1}{n-1}c(e_i)c(e_j)s_j,\xi\rangle\Big)^2\\
			=&\Big\langle (n-1)s_i-\frac{1}{n-1}\sum_{j:i\ne j}c(e_i)c(e_j)s_j,\xi\Big\rangle^2\\
			=&\frac{n^2(n-2)^2}{(n-1)^2}\langle s_i,\xi\rangle^2.
		\end{split}
	\end{equation}
	Here, the last equality follows from
	$$\sum_{j:i\ne j}c(e_i)c(e_j)s_j=c(e_i)\Big(-c(e_i)s_i\Big)s_i=s_i.$$
	Thus, we obtain an estimate for the left-hand side of line \eqref{eq:ij} as
	\begin{equation}\label{eq:lhs-ij}
		\sum_{i\ne j}\langle s_{i,j},\xi\rangle^2\geq \frac{n^2(n-2)^2}{(n-1)^3}\sum_{i=1}^n\langle s_i,\xi\rangle^2.
	\end{equation}
	Combining \eqref{eq:ij}, \eqref{eq:rhs-ij}, and \eqref{eq:lhs-ij}, we obtain that
	$$\frac{n^2(n-2)^2}{(n-1)^3}\sum_{i=1}^n\langle s_i,\xi\rangle^2\leq \frac{n-2}{n-1}\cdot \frac{n(n-2)}{n-1}\sum_{i=1}^n|s_i|^2.$$
	Since $n\geq 3$, we have
	$$\sum_{i=1}^n\langle s_i,\xi\rangle^2\leq \frac{n-1}{n}\sum_{i=1}^n|s_i|^2.$$
	This finishes the proof.
\end{proof}

Indeed, the proof of Lemma \ref{lemma:kato} implies a quantitative version of Proposition \ref{prop:kato} as follows.

\begin{proposition}\label{prop:kato2}
	If $E$ is a bundle over $X^n$ equipped with a Clifford action of $TX$, then there exists $c_n>0$  depending only on $n$ such that, for any smooth section $\xi$ of $E$, we have
	$$\Big|\nabla|\xi|\Big|^2\leq \frac{n-1}{n}|\nabla\xi|^2+c_n|D\xi|^2+c_n|D\xi||\nabla\xi|.$$
\end{proposition}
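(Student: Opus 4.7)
The plan is to adapt the proof of Proposition \ref{prop:kato} by decomposing $\nabla\xi$ orthogonally at each point into a piece lying in $\ker T$ (where Lemma \ref{lemma:kato} applies directly) and a transverse piece whose size is controlled by $|D\xi|$. Fix $x\in X$ with $\xi(x)\neq 0$, set $V=T_xX$, $W=E_x$, $\psi = \nabla\xi(x)\in V\otimes W$, so that $T\psi = D\xi(x)$, and write $\psi = \psi_0 + \psi_1$ with $\psi_0\in\ker T$ and $\psi_1\in(\ker T)^\perp$. To bound $|\psi_1|$, I would exhibit an explicit right inverse of $T$: the Clifford relation $c(e_i)^2=-1$ implies that the map $S\colon W\to V\otimes W$, $S(w) = -\frac{1}{n}\sum_{i=1}^{n} e_i\otimes c(e_i)w$, satisfies $T\circ S = \id_W$ and has operator norm $\frac{1}{\sqrt{n}}$. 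Since $\psi_1$ is the minimal-norm preimage of $D\xi(x)$ under $T$, this yields $|\psi_1|\leq \frac{1}{\sqrt{n}}|D\xi(x)|$, while orthogonality gives $|\psi_0|\leq |\nabla\xi(x)|$.

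Next I would turn to the Kato-type cotangent vector $\omega\in T^*_xX$, defined by $\omega(v)=\langle \nabla_v\xi,\xi(x)\rangle_{E_x}$, which satisfies $|\omega|^2 = |\xi(x)|^2\,|\nabla|\xi|(x)|^2$ exactly as in the proof of Proposition \ref{prop:kato}. The decomposition $\psi = \psi_0 + \psi_1$ induces a decomposition $\omega = \omega_0 + \omega_1$. Applying Lemma \ref{lemma:kato} to $\psi_0\in\ker T$, paired in the $W$-factor with $\xi(x)$, gives
\[
|\omega_0|^2 \leq \frac{n-1}{n}|\psi_0|^2|\xi(x)|^2 \leq \frac{n-1}{n}|\nabla\xi|^2|\xi|^2,
\]
while the Cauchy--Schwarz inequality yields $|\omega_1|^2\leq |\psi_1|^2|\xi(x)|^2\leq \frac{1}{n}|D\xi|^2|\xi|^2$. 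The triangle inequality $|\omega|^2 \leq |\omega_0|^2 + 2|\omega_0||\omega_1| + |\omega_1|^2$, combined with these bounds and division by $|\xi|^2$, produces the desired inequality with a constant $c_n$ depending only on $n$ (for instance $c_n = \max(\tfrac{1}{n},\tfrac{2\sqrt{n-1}}{n})$). Points where $\xi = 0$ are handled exactly as in Proposition \ref{prop:kato}.

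The main subtlety is keeping track of the two distinct pairings on $V\otimes W$---pairing with $v\in V$ yields an element of $W$, while pairing with $w\in W$ yields an element of $V$---and selecting a right inverse of $T$ whose operator norm is transparent. Once the explicit $S$ is in hand, the remainder is a cleanly packaged Young-type inequality applied to the two pieces of the orthogonal decomposition, so no further ingredient beyond Lemma \ref{lemma:kato} is needed.
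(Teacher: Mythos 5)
Your proposal is correct, and it proves exactly the stated inequality with an explicit constant (one can take $c_n=\max\{\tfrac1n,\tfrac{2\sqrt{n-1}}{n}\}$), but it follows a genuinely different route from the paper. The paper obtains Proposition \ref{prop:kato2} by implicitly re-running the inductive computation in the proof of Lemma \ref{lemma:kato} while carrying along the error terms coming from $\sum_i c(e_i)s_i=D\xi\neq 0$; you instead keep Lemma \ref{lemma:kato} as a black box, decompose $\nabla\xi(x)=\psi_0+\psi_1$ orthogonally with respect to $\ker T$, and control the transverse piece by the explicit right inverse $S(w)=-\tfrac1n\sum_i e_i\otimes c(e_i)w$, whose norm $1/\sqrt n$ (using that Clifford multiplication by unit vectors is an isometry, the same compatibility already used in the paper's proof via $\omega^*=-\omega$) gives $|\psi_1|\leq \tfrac{1}{\sqrt n}|D\xi(x)|$ since $\psi_1$ is the minimal-norm preimage of $D\xi(x)$. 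Applying the exact lemma to $\psi_0$, Cauchy--Schwarz to $\psi_1$, and expanding $(|\omega_0|+|\omega_1|)^2$ then yields the claim; this is in effect the standard ``refined Kato with remainder'' argument in the spirit of \cite{CGH00}. What your approach buys is a clean separation of the sharp constant from the error terms and a transparent, explicit $c_n$, at the cost of introducing the projection/right-inverse bookkeeping; the paper's route avoids that bookkeeping but leaves $c_n$ implicit. One cosmetic point: Lemma \ref{lemma:kato} as printed says $\xi\in V$, but its proof (and your use of it) pairs in the $W$-factor with $\xi\in W$, which is the intended statement, so your application is consistent. Your treatment of the zero set of $\xi$ matches the paper's and inherits the same (harmless, a.e.) caveat about differentiability of $|\xi|$ there.
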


\section{Sharp Bottom spectrum and scalar curvature rigidity} \label{sec: proof_of_main_thm}
In this section, we will prove the main theorems and then state several related corollaries. 
\subsection{Complete manifolds with cocompact action}
We first prove the result for universal covers of closed manifolds. Let us first state a technical ingredient, the proof of which will be postponed to Section \ref{sec: unique_cont.}.
\begin{proposition}[See Theorem \ref{thm:uniqueCont} in Section \ref{sec: unique_cont.}]\label{prop:uniqueCont}
Suppose that $(X^n, g)$ is a complete spin Riemannian manifold with bounded geometry and  $Y$ a subset of $X$. Let $N_a(Y)$ be the $a$-neighborhood of $Y$ for some $a>0$. Let $D$ be the Dirac operator of $X$. Let  $P_\lambda$ be the spectral projection of $D^2$ with spectrum $\leq \lambda$ and $V_\lambda$ the range of $P_\lambda$. If there exists $r>0$ such that $N_r(Y)=X$, then there exists a constant $C_\lambda>0$ such that
	$$\|\sigma\|_{L^2(X)}\leq C_\lambda\|\sigma\|_{L^2(N_a(Y))}\text{ for any }\sigma\in V_\lambda,$$
    where $C_\lambda\leq c_1e^{c_2\lambda}$ for some $c_1,c_2>0$.
\end{proposition}

\begin{theorem}\label{thm:cover}
	Suppose that $(M^n,g)$ is a closed Riemannian manifold and $(\widetilde M,\widetilde g)$ is the Riemannian universal cover of (M, g). Assume that $\widetilde M$ is spin and $\widetilde D$ is the Dirac operator acting on the spinor bundle over $\widetilde M$. If
	\begin{enumerate}
		\item $\ind(\widetilde D)\in K_*(C^*(\widetilde M)^\Gamma)\cong K_*(C^*_r(\Gamma))$ is non-zero;
		\item $\Sc_{g}\geq -\kappa$ for some constant $\kappa\geq 0$,
	\end{enumerate} 
	then $$\lambda_1(\widetilde M,\widetilde g)\leq \frac{n-1}{4n}\kappa.$$
	Moreover, if $\lambda_1(\widetilde M,\widetilde g)= \frac{n-1}{4n}\kappa$,  then $(M^n, g)$ has constant scalar curvature $\Sc_{g}=-\kappa$ on $M$.
\end{theorem}

\begin{proof}
   	Let $S_{\widetilde M}$ be the spinor bundle over $\widetilde M$. Since $\ind(\widetilde D)\in K_*(C^*(\widetilde M)^\Gamma)$ is non-zero, we obtain that the Dirac operator $D$ is not invertible by Proposition \ref{prop: nozero_index_zero_spectrum}.  Consequently, for any $\varepsilon>0$, there exists a spinor $s\in L^2(S_{\widetilde M})$ such that
	$$\|s\|=1 \text{ and }\|\widetilde Ds\|\leq\varepsilon.$$
    Recall that
	\begin{itemize}
		\item  The Lichnerowicz formula \[\Delta = \nabla^*\nabla + \frac{1}{4}\Sc_{\widetilde g}.\]

        shows that 
		$$\|\nabla s\|^2=\|D s\|^2-\int_{\widetilde M}\frac{\Sc_{\widetilde g}}{4}|s|^2\leq \varepsilon^2+\frac{\kappa}{4}.$$
		
		\item 	The  Kato inequality in Proposition \ref{prop:kato2} indicates that there exists $c_n>0$ such that
		$$\Big|\nabla|s|\Big|^2\leq \frac{n-1}{n}|\nabla s|^2+c_n|D s|^2+c_n|Ds||\nabla s|$$
		in $(\widetilde M, \widetilde {g})$.
	\end{itemize}
    \bigskip
	By Integrating on $\widetilde M$, we obtain that
	\begin{align*}
		\int_{\widetilde M}\langle-\Delta |s|,|s|\rangle=\Big\|\nabla|s|\Big\|^2
		\leq \frac{n-1}{n}\big(\varepsilon^2+\frac{\kappa}{4}\big)+c_n\varepsilon^2+c_n\varepsilon\sqrt{\varepsilon^2+\frac{\kappa}{4}}.
	\end{align*}
 Since $\varepsilon > 0$ can be chosen arbitrarily, we take the limit $\varepsilon \to 0^+$ and conclude that $$\lambda_1(\widetilde M,\widetilde g)\leq \frac{n-1}{4n}\kappa.$$
	
Next, let us prove the scalar curvature rigidity if the equality holds. Assume otherwise that for some $\delta>0$, the open set 
    $$U=\{x\in M:\Sc_g(x)>-\kappa+\delta\}$$
    is non-empty. 
    
    Let $\widetilde U$ be the lift of $U$ in $\widetilde M$ and we define the $r$-neighborhood (denoted by $\widetilde U_r$) of $\widetilde U$ as
    \[\widetilde U_r = \{x \in \widetilde M, \ \dist_{\widetilde g}(x, \widetilde U) \leq r\}.\]
    Then there exists a constant $r> 0$ such that $\widetilde U_r = \widetilde M$ with $r$ at most the diameter of $M$. Moreover, for  any given $\varepsilon>0$, let $P_{\varepsilon^2}$ be the spectral projection to the spectrum $\leq\varepsilon^2$ and $V_{\varepsilon^2}$ the range of $P_{\varepsilon^2}$. Since $\widetilde D$ is non-invertible,  we obtain that $V_{\varepsilon^2}$ is non-empty. Let us pick a spinor $s$ in $V_{\varepsilon^2}$ with $\|s\|=1$. Clearly we have $\|\widetilde Ds\|\leq\varepsilon.$  By our assumption, we obtain that  $$\Sc_{\widetilde g}\geq -\kappa+\delta  \text{ on } \widetilde U.$$ Hence, by the Lichnerowicz formula, we get
	$$\|\nabla s\|^2=\|D s\|^2-\int_{\widetilde M}\frac{\Sc_{\widetilde g}}{4}|s|^2\leq \varepsilon^2+\frac{\kappa}{4}-\frac{\delta}{4}\|s\|^2_{L^2(\widetilde U)}.$$
	Similarly, we deduce
	$$\Big\|\nabla|s|\Big\|^2
	\leq \frac{n-1}{n}\big(\varepsilon^2+\frac{\kappa}{4}\big)+c_n\varepsilon^2+c_n\varepsilon\sqrt{\varepsilon^2+\frac{\kappa}{4}}-\frac{(n-1)\delta}{4n}\|s\|^2_{L^2(\widetilde U)}.$$
	Assume that $\varepsilon<1$. By Proposition \ref{prop:uniqueCont}, there exists $C>0$ independent of $\varepsilon$ such that
	$$\|s\|_{L^2(\widetilde U)}\geq \frac 1 C\|s\|=\frac 1 C.$$
	Therefore, we see that
	$$\Big\|\nabla|s|\Big\|^2
	\leq\frac{(n-1)\kappa}{4n}-\frac{(n-1)\delta}{4nC^2}+\Big(	
	\frac{n-1}{n}\varepsilon^2+c_n\varepsilon^2+c_n\varepsilon\sqrt{\varepsilon^2+\frac{\kappa}{4}}\Big).$$
	By letting $\varepsilon\to 0$, we reach
	$$\lambda_1(\widetilde M,\widetilde g)\leq  \frac{n-1}{4n}\kappa-\frac{(n-1)\delta}{4nC^2}<\frac{n-1}{4n}\kappa.$$
	This contradicts the assumption that $\lambda_1(X,g)=\frac{n-1}{4n}\kappa$ and finishes the proof.
\end{proof}

Next, we remark that the index-theoretic condition, namely $\ind(D) \in K_*(C^*(X)^\Gamma)$ being nonzero, can be verified under the following topological and algebraic conditions.

\begin{proposition}
	Suppose that $(M^n, g)$ is a closed Riemannian manifold and $\Gamma=\pi_1(M)$. If
	\begin{itemize}
		\item $M$ is rationally essential, namely the fundamental class $[M]$ is non-zero in $H_*(B\Gamma,\mathbb Q)$
        \item $\widetilde M$ is spin, and
		\item $\Gamma$ satisfies the Strong Novikov Conjecture \ref{conj:Novikov} (or its rational version),
	\end{itemize}
	then $\ind(\widetilde D)$ is in non-zero in $K_*(C^*_r(\Gamma))$. Hence, Theorem \ref{thm: cocompact_rigidity} holds.
\end{proposition}
\begin{proof}
    Assume first $M$ itself is spin, and $D$ the Dirac operator on $M$. Let $B\Gamma$ be the classifying space of $\Gamma$, $E\Gamma$ its universal cover, and $h\colon M\to B\Gamma$ the classifying map. As equivariant homology classes are, by definition, locally finitely supported, we have a natural map
    $$H^\Gamma_*(B\Gamma,\mathbb Q)=H_*^\Gamma(E\Gamma,\mathbb Q)\to \lim_{d\to\infty} H_*^\Gamma(P_d(\Gamma),\mathbb Q),$$
    which is injective by \cite[(7.4)]{BaumConnesHigson}. A Mayer-Vietoris argument shows that
    $$H^*(P_d(\Gamma),\mathbb Q)\cong K_*(C^*_L(P_d(\Gamma))\otimes\mathbb Q$$
    via the Chern character map. Therefore, if otherwise the higher index of $\widetilde D$ vanishes in $K_*(C^*_r(\Gamma))$, then the Chern character of $D$ vanishes in $H_*(B\Gamma,\mathbb Q)$. As a result, for any class $\alpha\in H^*(B\Gamma,\mathbb Q)$, we have 
    $$\langle\hat A(M)\cup h^*(\alpha),[M]\rangle=0,$$
    where $\hat A(M)$ is the $\hat A$-class of $M$ given by
    $$\hat A(M)=1-\frac{1}{24} p_1(M)+\cdots\in H^*(M,\mathbb Q).$$
    In particular, we have $\langle h^*(\alpha),[M]\rangle=0$ for any $\alpha\in H^n(B\Gamma,\mathbb Q)$, which contradicts to that $M$ is rationally essential.

    In general, the proof follows from the same argument as in the proof of \cite[Theorem 3.5]{MR720934} by considering instead the $\hat \Gamma$-equivariant index with $\hat \Gamma$ constructed from the pull-back diagram
    $$\begin{tikzcd}
\hat\Gamma \arrow[r] \arrow[d] & Aut(P_{Spin}) \arrow[d] \\
\Gamma \arrow[r]               & Aut(P_{SO})            
\end{tikzcd},$$
where $P_{SO}$ is the principal $SO(n)$-bundle over $\widetilde M$ and $P_{Spin}$ is the principal $Spin(n)$-bundle over $\widetilde M$.
\end{proof}

Note that our scalar curvature rigidity result follows from a spectral argument. This idea was previously used in \cites{WangXie25,MR4855340}. In fact, we can further apply this type of argument to the special case of the rigidity part where $\kappa=0$, from which we can deduce that $M$ is Ricci flat (see \cite{GromovLawson}). Classically, the fact that $M$ is Ricci flat follows from a result of Kazdan (see \cite{Kazdan}). Here, we provide a new proof that depends only on the technique of Dirac operators and \emph{unique continuation theorem} in this paper.

\begin{proposition} \label{prop: Ricci_flat}
    With the same notation and assumptions as in Theorem \ref{thm:cover}, if $\kappa = 0$, then $\Ric_g \equiv 0$ on $M$. 
\end{proposition}
\begin{proof}
    Note that Theorem \ref{thm:cover} implies  $\Sc_g=0$  on $M$.
    Now we assume  that $Ric_g$ is not identically zero on $M$, then it means that, for some $\delta>0$, the open set
    $$U=\{x\in M: |Ric_g(x)|>\delta\}$$
    is non-empty. Let $\widetilde U$ be the lift of $U$ in $\widetilde M$. 

    Since by assumption that $\widetilde D$ is non-invertible, for any $\varepsilon>0$, there exists a spinor $s$ of $\widetilde M$ such that
    $$\|s\|=1\text{ and }\|\widetilde D s\|\leq\varepsilon.$$
    The Lichnerowicz formula shows that
    $$\|\nabla s\|^2=\|Ds\|^2\leq \varepsilon^2.$$
    Let $c$ be the Clifford action of $S_{\widetilde M}$ and assume the local orthonormal basis $\{e_i\}$ of $\widetilde M$, then we obtain,  by \cite[Corollary 2.9]{spinorialapproach}
    $$\sum_{j=1}^nc(e_i)(\nabla_{e_i}\nabla_{e_j}-\nabla_{e_j}\nabla_{e_i}-\nabla_{[e_i,e_j]})s=-\frac 1 2c(Ric_{\widetilde g}(e_i))s.$$
    Hence
    $$\langle\sum_{j=1}^nc(e_i)(\nabla_{e_i}\nabla_{e_j}-\nabla_{e_j}\nabla_{e_i}-\nabla_{[e_i,e_j]})s,-\frac 1 2c(Ric_{\widetilde g}(e_i))s\rangle=\frac 1 4|Ric_{\widetilde g}(e_i)|^2\cdot|s|^2.$$
    Therefore, by the Stokes formula and a partition of unity, there exists $c>0$ that only depends on $n$ and $g$ such that
    $$\int_{\widetilde M}|Ric_g|^2|s|^2\leq c\int_{\widetilde M}|\nabla s|^2.$$
    Thus we obtain
    $$\delta^2\int_{\widetilde U}|s|^2\leq \varepsilon^2.$$
    Recall that Poincar\'{e} inequality on $\widetilde M$ (see \cite{WangXie25}*{Lemma 2.7}) as follows.
    $$\|s\|^2\leq C_1\|\nabla s\|^2+C_2\int_{\widetilde U}|s|^2$$
    for some $C_1,C_2>0$. Hence, we reach that $$\|s\|^2\leq (C_1+C_2/\delta^2)\varepsilon^2$$ which leads to a contradiction that $\|s\|=1$. Therefore, we conclude that $\Ric(g) = 0$ on $M$.
\end{proof}

\begin{remark}
 Under the additional assumption that $(M, g)$ is aspherical in Theorem~\ref{thm: cocompact_rigidity}, the condition $\kappa = 0$ implies that $(M, g)$ is flat. However, we note that when $\kappa = 1$, the corresponding geometric rigidity result asserting that $\sec = -1$ is significantly more subtle and remains nontrivial (see Problem \ref{conj: rigidity}).
\end{remark}

\subsection{Complete manifolds without group action}
In this subsection, we prove the main result for complete manifolds. Let us first introduce some basic concepts.

\begin{definition}\label{def:net}
	Suppose that  $(X, d)$ is a metric space and $S$ is a subset of $X$. $S$ is said to be  a \emph{net} of $X$ if there exists $r>0$ such that $N_r(S)=X$, where $N_r(S) = \{x \in X: \dist(x, S) < r\}$. Furthermore, we say that $S$ is a \emph{discrete net} of $X$ if there exists $r'>0$ such that $d(x,y)\geq r'$ for any $x\ne y$ in $S$.
\end{definition}

\begin{theorem}\label{thm:complete}
	Suppose that $(X^n, g)$ is a complete, noncompact, spin Riemannian manifold with bounded geometry and  $D$ is the Dirac operator acting on the spinor bundle over $X$. If
	\begin{enumerate}
		\item $\ind(D)\in K_*(C^*(X))$ is non-zero, and
		\item $\Sc_{g}\geq -\kappa$ for some constant $\kappa\geq 0$,
	\end{enumerate} 
	then $$\lambda_1(X,g)\leq \frac{n-1}{4n}\kappa.$$
Moreover, if $\lambda_1(X,g)= \frac{n-1}{4n}\kappa$, then for any $\delta>0$, the set $$\{x\in X:\Sc_g(x)\geq -\kappa+\delta\}$$ is not a net of $X$.
\end{theorem}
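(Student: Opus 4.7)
Since $\ind(D)\neq 0$, the proposition at the end of Section \ref{subsec:higher_index} yields $0\in\spec(D)$; as $C_c^\infty(X;S)$ is a core for $D$ (using completeness of $X$), for each $\epsilon>0$ I can pick $\xi\in C_c^\infty(X;S)$ with $\|\xi\|_{L^2}=1$ and $\|D\xi\|_{L^2}\leq\epsilon$. Integrating the Lichnerowicz identity $D^2=\nabla^*\nabla+\Sc_g/4$ against $\xi$ and using $\Sc_g\geq-\kappa$ gives $\|\nabla\xi\|_{L^2}^2\leq\epsilon^2+\kappa/4$. Integrating the Kato-type inequality of Proposition \ref{prop:kato2} and applying Cauchy--Schwarz yields
\[
\int_X|\nabla|\xi||^2\leq\tfrac{n-1}{n}\|\nabla\xi\|_{L^2}^2+c_n\epsilon^2+c_n\epsilon\|\nabla\xi\|_{L^2}.
\]
Since $|\xi|$ is Lipschitz with compact support and $\|\xi\|_{L^2}=1$, the variational characterization of $\lambda_1$ gives $\lambda_1\leq\int_X|\nabla|\xi||^2$; letting $\epsilon\to 0$ yields $\lambda_1\leq\tfrac{n-1}{4n}\kappa$.

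For the moreover statement, assume $\lambda_1=\tfrac{n-1}{4n}\kappa$ and, for contradiction, that $S=\{\Sc_g\geq-\kappa+\delta\}$ is an $r$-net. Bounded geometry bounds $|\nabla\Sc_g|\leq K<\infty$, so with $r_0=\delta/(2K)$ the enlargement $S':=N_{r_0}(S)$ is contained in $\{\Sc_g\geq-\kappa+\delta/2\}$ and is still a net. Splitting the scalar-curvature integral over $S'$ and $X\setminus S'$ refines the Lichnerowicz step to
\[
\|\nabla\xi\|^2\leq\|D\xi\|^2+\tfrac{\kappa}{4}-\tfrac{\delta}{8}\int_{S'}|\xi|^2.
\]
Plugging back into the chain and using the saturation assumption, any sequence $\xi_k$ of approximate harmonic spinors with $\|\xi_k\|_{L^2}=1$ and $\|D\xi_k\|_{L^2}\to 0$ must satisfy $\int_{S'}|\xi_k|^2\to 0$.

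To finish via unique continuation: by bounded geometry one produces (after passing to a subsequence) points $x_k\in X$ and constants $R_0,c_0>0$ with $\int_{B_{R_0}(x_k)}|\xi_k|^2\geq c_0$, while the net property supplies $y_k\in S$ with $d(x_k,y_k)<r$, so $B_{r_0}(y_k)\subset S'$ gives $\int_{B_{r_0}(y_k)}|\xi_k|^2\to 0$. Pulling back via exponential coordinates at $x_k$ to a fixed Euclidean ball $B$, the metrics $g_k$ converge in $C^\infty(B)$ to some $g_\infty$ by bounded geometry. Identifying the spinor bundles of $g_k$ with that of $g_\infty$ via the Bourguignon--Gauduchon procedure and using uniform interior elliptic estimates for $D_{g_k}$, I extract a subsequential $L^2_{loc}$-limit $\widetilde\xi_\infty$ on $B$, which is smooth by elliptic regularity, $D_{g_\infty}$-harmonic, nontrivial on $B$ (since $\|\widetilde\xi_\infty\|_{L^2(B)}\geq c_0/2$), and vanishes identically on the subball $B_{r_0}(\widetilde y_\infty)\subset B$. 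This contradicts the unique continuation theorem of Section \ref{sec: unique_cont.}.

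The main obstacle is producing the concentration point $x_k$, i.e., ruling out the ``vanishing'' case of Lions's concentration--compactness dichotomy in which $|\xi_k|^2$ spreads arbitrarily thinly across $X$. The relevant input is the approximate subharmonicity of $|\xi_k|^2$ coming from the Bochner identity (with error controlled by $\|D\xi_k\|_{L^2}$) combined with the uniform volume and injectivity radius bounds from bounded geometry; a secondary technicality is the smooth identification of spinor bundles for $g_k\to g_\infty$, for which the Bourguignon--Gauduchon construction is tailor-made.
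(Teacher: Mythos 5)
Your proof of the inequality $\lambda_1(X,g)\leq\frac{n-1}{4n}\kappa$ is correct and is essentially the paper's argument (non-invertibility of $D$ from $\ind(D)\ne 0$, Lichnerowicz, the Kato inequality of Proposition \ref{prop:kato2}, and the variational characterization applied to $|\xi|$); choosing the approximate kernel elements in $C_c^\infty$ via the core property is a harmless variant.

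The ``moreover'' part, however, has a genuine gap, and it is exactly at the point you flag yourself: producing the concentration points $x_k$. Nothing in your setup rules out the vanishing alternative, and approximate subharmonicity of $|\xi_k|^2$ together with bounded geometry cannot rule it out: local elliptic estimates bound $\sup$-norms by local $L^2$ mass, so if the local $L^2$ mass tends to zero uniformly they only tell you $\|\xi_k\|_{L^\infty}\to 0$, which is perfectly compatible with $\|\xi_k\|_{L^2}=1$ on a noncompact manifold. Indeed, on $(\mathbb H^n,g_{\mathbb H^n})$ the operator $D$ has purely continuous spectrum, so every approximate kernel sequence spreads out; vanishing is the typical behavior in precisely the regime $\lambda_1=\frac{n-1}{4n}\kappa$, so a blow-up-at-a-concentration-point scheme cannot be completed. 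The paper avoids this issue entirely: instead of an arbitrary approximate kernel sequence, it takes $s$ of norm one in the range $V_{\varepsilon^2}$ of the spectral projection of $D^2$ below $\varepsilon^2$ (nonempty because $D$ is not invertible), and then invokes Proposition \ref{prop:uniqueContDirac} --- a \emph{global} quantitative unique continuation inequality from a discrete net, proved as Theorem \ref{thm:uniqueCont} --- which gives $\|s\|_{L^2(N_a(Y))}\geq \frac{1}{C}\|s\|$ with $C$ uniform for all $\varepsilon\leq 1$ (since $V_{\varepsilon^2}\subset V_1$). Feeding this uniform lower bound into the refined Lichnerowicz--Kato chain yields $\lambda_1\leq \frac{n-1}{4n}\kappa-\frac{(n-1)\delta}{4nC^2}$, the desired contradiction, with no concentration or limit extraction needed. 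Note also that the unique continuation result of Section \ref{sec: unique_cont.} is this net-based spectral-subspace inequality, not a local strong-unique-continuation statement on a ball; the local statement you would need in your limit step is classical UCP for Dirac operators, but the step you cannot justify is upstream of it. To repair your argument you would have to replace the concentration step by the spectral-subspace choice plus Proposition \ref{prop:uniqueContDirac}, i.e.\ by the paper's route.
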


\begin{proof}
   	Let $S_{X}$ be the spinor bundle over $X$. Since $\ind(D)\in K_*(C^*(X))$ is non-zero, we obtain that the Dirac operator $D$ is not invertible.  Consequently, for any $\varepsilon>0$, there exists a spinor $s\in L^2(S_{X})$ such that
	$$\|s\|=1 \text{ and }\|Ds\|\leq\varepsilon.$$
	Note that
	\begin{itemize}
		\item  The Lichnerowicz formula shows that 
		$$\|\nabla s\|^2=\|D s\|^2-\int_{X}\frac{\Sc_{g}}{4}|s|^2\leq \varepsilon^2+\frac{\kappa}{4}.$$
		
		\item 	The  Kato inequality in Proposition \ref{prop:kato2} indicates that there exists $c_n>0$ such that
		$$\Big|\nabla|s|\Big|^2\leq \frac{n-1}{n}|\nabla s|^2+c_n|D s|^2+c_n|Ds||\nabla s|$$
		in $(X, {g})$.
	\end{itemize}
	By Integrating on $X$, we obtain that
	\begin{align*}
		\int_{X}\langle-\Delta |s|,|s|\rangle=\Big\|\nabla|s|\Big\|^2
		\leq \frac{n-1}{n}\big(\varepsilon^2+\frac{\kappa}{4}\big)+c_n\varepsilon^2+c_n\varepsilon\sqrt{\varepsilon^2+\frac{\kappa}{4}}.
	\end{align*}
	Since $\varepsilon$ can be chosen as any positive real number, we let $\varepsilon \rightarrow 0$, and then we obtain $$\lambda_1(X,g) \leq \frac{n-1}{4n}\ kappa.$$

	Next, let us prove the scalar curvature rigidity if the equality holds as follows. We will argue by contradiction. Suppose that there exists a positive constant $\delta>0$ such that the set $$X_\delta:=\{x\in X:\Sc_g(x)\geq -\kappa+\delta\}$$ is  a net of $X$, then there exists a discrete net $Y$ of $X$ and some $a>0$ such that
	$$\Sc_g(x)\geq -\kappa+\delta\text{ for any }x\in N_{a}(Y).$$
	Here, we have used the assumption of bounded geometry.
	
	Now given any $\varepsilon>0$, let $P_{\varepsilon^2}$ be the spectral projection to the spectrum $\leq\varepsilon^2$ and $V_{\varepsilon^2}$ the range of $P_{\varepsilon^2}$. Since $D$ is non-invertible,  we obtain that $V_{\varepsilon^2}$ is non-empty. Let us pick a spinor $s$ in $V_{\varepsilon^2}$ with $\|s\|=1$. Clearly we have $\|Ds\|\leq\varepsilon.$  By our assumption that  $\Sc_{g}\geq -\kappa+\delta$ on $N_a(Y)$, we obtain by the Lichnerowicz formula that
	$$\|\nabla s\|^2=\|D s\|^2-\int_X\frac{\Sc_g}{4}|s|^2\leq \varepsilon^2+\frac{\kappa}{4}-\frac{\delta}{4}\|s\|^2_{L^2(N_a(Y))}.$$
	Similarly, we deduce
	$$\Big\|\nabla|s|\Big\|^2
	\leq \frac{n-1}{n}\big(\varepsilon^2+\frac{\kappa}{4}\big)+c_n\varepsilon^2+c_n\varepsilon\sqrt{\varepsilon^2+\frac{\kappa}{4}}-\frac{(n-1)\delta}{4n}\|s\|^2_{L^2(N_a(Y))}.$$
	Assume that $\varepsilon<1$. By Proposition \ref{prop:uniqueCont}, there exists $C>0$ independent of $\varepsilon$ such that
	$$\|s\|_{L^2(N_a(Y))}\geq \frac 1 C\|s\|=\frac 1 C.$$
	Therefore, we see that
	$$\Big\|\nabla|s|\Big\|^2
	\leq\frac{(n-1)\kappa}{4n}-\frac{(n-1)\delta}{4nC^2}+\Big(	
	\frac{n-1}{n}\varepsilon^2+c_n\varepsilon^2+c_n\varepsilon\sqrt{\varepsilon^2+\frac{\kappa}{4}}\Big).$$
	By letting $\varepsilon\to 0$, we have
	$$\lambda_1(X,g)\leq  \frac{n-1}{4n}\kappa-\frac{(n-1)\delta}{4nC^2}<\frac{n-1}{4n}\kappa.$$
	This contradicts with the assumption that $\lambda_1(X,g)=\frac{n-1}{4n}\kappa$. This finishes the proof.
\end{proof}

We also list some topological conditions where $\ind(D)\in K_*(C^*(X))$ is non-zero.

\begin{proposition} \label{prop: novikov_nonzero_index}
	If $(X,g)$ is a geometrically contractible  Riemannian manifold and satisfies the Coarse Novikov Conjecture, then $\ind(D)$ is non-zero in $K_*(C^*(X))$. In particular, Theorem \ref{thm: geometrically_contracbile_spectrum} holds for $X$.
\end{proposition}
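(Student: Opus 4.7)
The plan is to exhibit $\ind(D)$ as the image of the K-homology fundamental class under the coarse assembly map, and then invoke the coarse Novikov conjecture to conclude non-vanishing.

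First, using the bounded geometry of $(X,g)$ (implicit in the hypothesis of Theorem \ref{thm: geometrically_contracbile_spectrum}), I would pick a discrete net $Y \subset X$ of bounded geometry. The inclusion $Y \hookrightarrow X$ is a coarse equivalence, giving natural isomorphisms $K_*(C^*(X)) \cong K_*(C^*(Y))$ and $\lim_{d\to\infty} K_*(C^*_L(P_d(X))) \cong \lim_{d\to\infty} K_*(C^*_L(P_d(Y)))$ compatible with the evaluation maps. Thus the coarse Novikov conjecture for $X$ amounts to injectivity of $\ev_*$ as formulated in Conjecture \ref{conj:coarseNovikov}.

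Second, under the natural isomorphism $K_*(C^*_L(X)) \cong K_*(X)$ recalled in Section \ref{subsec:higher_index}, the local higher index $\ind_L(D)$ corresponds to the K-homology fundamental class $[D]$ of the spin manifold $X$, which is Poincaré-dual to $[1_X] \in K^0(X)$ and hence non-zero. The crucial observation is that the canonical pullback $K_*(C^*_L(X)) \to \lim_{d\to\infty} K_*(C^*_L(P_d(Y)))$ sends $[D]$ to a non-zero class. For this I would use geometric contractibility: the function $R(r)$ allows one to construct, for each large $d$, a Lipschitz coarse inverse $P_d(Y) \to X$ to the natural coarse map $X \to P_d(Y)$. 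At the level of K-theory of localization algebras, this implements an inverse to the pullback map, so the pullback is an isomorphism.

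Combining the above, the coarse Novikov conjecture gives injectivity of $\ev_*$, so $\ind(D) = \ev_*(\ind_L(D)) \neq 0$ in $K_*(C^*(X))$, and the ``In particular'' conclusion follows from Theorem \ref{thm:complete}. The main obstacle is the second step: constructing, for each sufficiently large propagation scale $d$, the Lipschitz contraction $P_d(Y) \to X$ with controlled propagation and verifying that it realizes a coarse homotopy inverse at the level of localization algebras. This is in the spirit of Yu's descent principle, but making it explicit via the contraction function $R(r)$ requires an inductive gluing over the skeleta of $P_d(Y)$ and careful propagation control to remain within the localization algebra framework.
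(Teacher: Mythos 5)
Your argument is correct and is exactly the standard route the paper implicitly relies on (the proposition is stated there without proof): identify $\ind_L(D)$ with the locally finite $K$-homology fundamental class, which is non-zero by Poincar\'e duality, use geometric (uniform) contractibility together with bounded geometry to see that the coarsening map into $\lim_{d\to\infty} K_*(C^*_L(P_d(Y)))$ does not kill it, and then apply the injectivity of $\ev_*$ from the coarse Novikov conjecture to get $\ind(D)\neq 0$. Two minor refinements: bounded geometry must indeed be assumed (as in Theorem \ref{thm: geometrically_contracbile_spectrum}, since uniform contractibility alone does not suffice), and the maps $P_d(Y)\to X$ in your second step need only be continuous, proper and coarse rather than Lipschitz; their construction by induction over skeleta is the standard coarsening argument, cf.\ \cite{willett2020higher}.
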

\begin{proof}
    We first note that the assumption of geometrically contractible implies that $X$ is spin, hence the Dirac operator $D$ is well-defined.
    
    Let $\mathcal N$ be a maximal $1$-discrete set of $X$, namely $d(x,y)\geq 1$ for any $x,y\in\mathcal N$, and $\mathcal N$ is maximal under inclusion of such sets. It follows from the coarse invariance of Roe algebra \cite[Theorem 5.1.15]{willett2020higher} that $K_*(C^*(\mathcal N))\cong K_*(C^*(X))$. Furthermore, since $X$ is geometrically contractible, for any $d>0$, the Rips complex $P_d(\mathcal N)$ is homotopic equivalence to $X$ in the sense of \cite[Theorem 6.4.16]{willett2020higher}, hence $K_*(C^*_L(P_d(\mathcal N))\cong K_*(C^*_L(X))$. Thus, the coarse Novikov conjecture for $X$ yields the injectivity of the index map
    $$K_*(C^*_L(X))\to K_*(C^*(X)).$$

    Under the assumption of bounded geometry, the $K$-theory $K_\ast(C^*_L(X))$ of the localization algebra $C^*_L(X)$ is naturally isomorphic to $K_*(X)$, the $K$-homology of $X$. Under this isomorphism, the local higher index of $D$ coincides with the $K$-homology class of $D$ (see \cite{Yulocalization,QiaoRoe} for the details).
    By \cite[Corollary 9.6.12]{willett2020higher}, we have the Poincar\'{e} duality
    $$K_n(X)\cong K_n(C^*_L(X))\cong \mathbb Z,$$
    which is generated by the local higher index of the Dirac operator $D$. As a result, $\ind(D)\ne 0$ in $K_n(C^*(X))$. 
\end{proof}

In particular, the above property holds for Riemannian manifolds with non-positive sectional curvature (see \cite{Yulocalization}*{Theorem 4.1}). Hence, a geometric version of Theorem \ref{thm:complete} is as follows.
\begin{corollary}
	Suppose that $(X^n,g)$ a Cartan--Hadamard manifold\footnote{A simply connected, complete Riemannian manifold $(X,g)$ is said to be a Cartan--Hadamard manifold if the sectional curvature is non-positive.} with bounded geometry. If $\Sc_g \geq -\kappa$, then
	\begin{equation*}
		\lambda_1( X, g) \leq \frac{n-1}{4n}\kappa.
	\end{equation*}
\end{corollary}
\begin{corollary}
	Suppose that $(X^n,g)$ has bounded geometry and is bi-Lipschitz equivalent to a Cartan--Hadamard manifold outside a compact set. If $\Sc_g \geq -\kappa$, then
	\begin{equation*}
		\lambda_1( X, g) \leq \frac{n-1}{4n}\kappa.
	\end{equation*}
\end{corollary}

In particular, the condition of the non-vanishing of the higher index holds for asymptotically hyperbolic manifolds. A complete Riemannian manifold $(X^2,g)$ is said to be asymptotically hyperbolic if it is conformally compact with the standard sphere $(\sph^{n-1},g_{\sph^{n-1}})$ as its conformal end, and there is a unique defining function $r$ in a collar neighborhood near infinity such that
$$g=\sinh^{-2}(r)\big(dr^2+g_{\sph^{n-1}}+\frac{r^n}{n} h+O(r^{n+1})\big)$$
where $h$ is a symmetric 2-tensor on $\sph^{n-1}$, and the asymptotic expression can be differentiated twice (see \cite{MR1879228} for the definition). The non-vanishing of the higher index holds for asymptotically hyperbolic manifolds follows from the pairing of the index of the Dirac operator with almost flat bundles (see  \cite[Chapter 11]{willett2020higher}).
Hence, we obtain that

	\begin{corollary}
		Suppose that $(X^n, g)$ is an asymptotically hyperbolic spin  manifold with scalar curvature $\Sc_g \geq -\kappa$, then
		\[\lambda_1({X}^n, g) \leq  \frac{n-1}{4n}\kappa.\]
	\end{corollary}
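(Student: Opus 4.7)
The plan is to verify the hypotheses of Theorem \ref{thm:complete} for an asymptotically hyperbolic spin manifold $(X^n,g)$, with $\kappa=n(n-1)$, so that the conclusion gives
\[\lambda_1(X,g)\leq \frac{n-1}{4n}\cdot n(n-1) = \frac{(n-1)^2}{4}.\]
By the preceding proposition, it suffices to check that (i) $(X,g)$ has bounded geometry, (ii) $(X,g)$ is geometrically contractible, and (iii) $(X,g)$ satisfies the coarse Novikov conjecture. The scalar curvature hypothesis is given.

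First, I would use the asymptotic expansion
\[g=\sinh^{-2}(r)\bigl(dr^2+g_{\sph^{n-1}}+\tfrac{r^n}{n}h+O(r^{n+1})\bigr)\]
in a collar neighborhood of infinity to show that the sectional curvature tends to $-1$ together with all its covariant derivatives, and that the injectivity radius has a uniform positive lower bound near infinity. On the compact core of $X$ the bounds are automatic. This establishes bounded geometry (i).

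Next, I would argue that $(X,g)$ is quasi-isometric (in fact coarsely equivalent) to real hyperbolic space $\mathbb H^n$. Indeed the asymptotic expansion shows that the metric $g$ differs from the warped product $dr^2+\sinh^{-2}(r)g_{\sph^{n-1}}$ (which is the hyperbolic metric in polar coordinates) by lower-order terms as $r\to 0$, and on the compact core the identity is bi-Lipschitz to hyperbolic geometry up to a bounded factor. From this I would conclude that $X$ is geometrically contractible: any metric ball $B(p,r)$ sits inside a ball of a uniformly controlled larger radius which, being quasi-isometric to a ball in $\mathbb H^n$, is contractible within $X$. This gives (ii). For (iii), the coarse equivalence with $\mathbb H^n$ reduces the coarse Novikov conjecture for $X$ to that for hyperbolic space, which is known by any of the routes listed in Section \ref{sec:Novikov} (negative curvature, finite asymptotic dimension, or coarse embeddability into Hilbert space).

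With (i)--(iii) in hand, the preceding proposition ensures $\ind(D)\neq 0$ in $K_*(C^*(X))$, and Theorem \ref{thm:complete} delivers the stated bound. The main technical obstacle will be the second step: writing down carefully the coarse equivalence between $(X,g)$ and $\mathbb H^n$ from the defining-function expansion, since the conformal factor $\sinh^{-2}(r)$ blows up at infinity, so I must work in the true distance rather than the conformally compactified metric and track how the $O(r^{n+1})$ error behaves at large distance in $(X,g)$. Once that coarse equivalence is established, geometric contractibility and the coarse Novikov property are inherited from $\mathbb H^n$ with no further work.
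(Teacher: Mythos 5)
Your overall route is the paper's: it deduces the corollary by combining the preceding proposition (geometric contractibility plus the coarse Novikov conjecture forces $\ind(D)\neq 0$ in $K_*(C^*(X))$) with Theorem \ref{thm:complete} for $\kappa=n(n-1)$, and the paper offers no more justification than the assertion that asymptotically hyperbolic manifolds satisfy those two hypotheses; your computation $\frac{n-1}{4n}\cdot n(n-1)=\frac{(n-1)^2}{4}$ is the intended one.

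The one step of your verification that would fail as stated is the deduction of geometric contractibility from a coarse equivalence with $\mathbb H^n$. Contractibility of $B(p,r)$ inside $B(p,R(r))$ is a topological condition at every scale and is not a coarse (or quasi-isometry) invariant: a manifold obtained from $\mathbb H^n$ by a compactly supported modification, e.g.\ a connected sum with $S^1\times S^{n-1}$, is still asymptotically hyperbolic in the sense of the paper's definition and coarsely equivalent to $\mathbb H^n$, but has nontrivial fundamental group and so is not geometrically contractible. The coarse equivalence does legitimately give you item (iii), because the coarse Novikov conjecture is a statement about the Roe algebra and Rips complexes and is coarsely invariant (and it holds for $\mathbb H^n$, e.g.\ by nonpositive curvature or finite asymptotic dimension); but for item (ii) you need a genuinely topological input: the expansion identifies a neighborhood of infinity with $S^{n-1}\times(0,\epsilon)$ carrying a metric close to the hyperbolic model $\sinh^{-2}(r)\big(dr^2+g_{\sph^{n-1}}\big)$ (your model $dr^2+\sinh^{-2}(r)g_{\sph^{n-1}}$ is not the hyperbolic metric in these coordinates), so $X$ is the interior of a compact manifold with sphere boundary, and one still must know (or implicitly assume, as the paper does) that $X$ itself is contractible before uniform contractibility of balls can be extracted. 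Your bounded-geometry step (i) matches what the paper uses, though note the stated definition only guarantees that the expansion can be differentiated twice, so asserting bounds on all derivatives of the curvature requires care.
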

%

\section{Unique continuation Theorem  on Riemannian manifolds} \label{sec: unique_cont.}

Suppose that $(X^n, g)$ is  a complete Riemannian manifold with bounded geometry and 
$P$ is a second order elliptic differential operator on $X$ acting on a smooth bundle $E$ over $X$. Then, the elliptic operator theory shows that $P$ satisfies the G\r{a}rding's inequality. Namely, there exists constants $c,c'>0$ such that
\begin{equation}\label{eq:Garding}
	\langle P\sigma,\sigma\rangle\geq c\|\nabla \sigma\|^2-c'\|\sigma\|^2.
\end{equation}
 In this section, we will prove a quantitative unique continuation theorem as follows.

\begin{theorem}\label{thm:uniqueCont}
Suppose that $(X^n, g)$ is a complete Riemannian manifold with bounded geometry and   $Y$ is a discrete net of $X$ and $N_a(Y)$ the $a$-neighborhood of $Y$ for some $a>0$.  Let $E$ be a vector bundle over $X$ and $P$ a second order elliptic differential operator acting on $E$ satisfying the G\r{a}rding inequality  in line \eqref{eq:Garding}. If  $P_\lambda$ is the spectral projection of $P$ acting on $L^2(E)$ with spectrum $\leq \lambda$ and $V_\lambda$ is the range of $P_\lambda$, then there exists a constant $C_\lambda>0$ such that
	$$\|\sigma\|_{L^2(X)}\leq C_\lambda\|\sigma\|_{L^2(N_a(Y))}\text{ for any }\sigma\in V_\lambda,$$
    where $C_\lambda\leq c_1e^{c_2\lambda}$ for some $c_1,c_2>0$.
\end{theorem}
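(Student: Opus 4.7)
The plan is to decouple the problem into (i) uniform Sobolev regularity for elements of $V_\lambda$, (ii) a covering reduction to a uniform local inequality on balls of bounded radius, and (iii) the local inequality itself, which is the heart of the matter.

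For step (i), I would combine the G\r{a}rding inequality \eqref{eq:Garding} with the spectral cutoff: for $\sigma\in V_\lambda$ one has $\langle P\sigma,\sigma\rangle\leq \lambda\|\sigma\|_{L^2}^2$, hence $\|\nabla\sigma\|_{L^2}^2\leq c^{-1}(\lambda+c')\|\sigma\|_{L^2}^2$. Since $P$ is self-adjoint and $V_\lambda$ is $P$-invariant with $\|P|_{V_\lambda}\|\leq\Lambda(\lambda)$ (after adding a constant so that $P$ is bounded below), iterating this bound and applying interior elliptic regularity in charts of bounded geometry yields uniform Sobolev estimates $\|\sigma\|_{H^k(X)}\leq C_{k,\lambda}\|\sigma\|_{L^2(X)}$ for every $k\geq 0$, and in particular $\|\nabla^k\sigma\|_{L^\infty(X)}\leq C_{k,\lambda}\|\sigma\|_{L^2(X)}$ by Sobolev embedding.

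For step (ii), I would use bounded geometry and the discrete-net assumption to fix radii $0<a<R$ and a multiplicity bound $N<\infty$ such that $X=\bigcup_{y\in Y}B(y,R)$ with each point lying in at most $N$ of the balls. It then suffices to establish the uniform local estimate
\begin{equation*}
\|\sigma\|_{L^2(B(y,R))}\leq C_\lambda\|\sigma\|_{L^2(B(y,a))}\qquad\forall\,y\in Y,\ \sigma\in V_\lambda,
\end{equation*}
as squaring, summing over $y\in Y$, and invoking the multiplicity on both sides produces the theorem. For step (iii), I would argue by contradiction using the compactness afforded by bounded geometry: if the local estimate fails there exist $y_n\in Y$ and $\sigma_n\in V_\lambda$ with $\|\sigma_n\|_{L^2(B(y_n,R))}=1$ and $\|\sigma_n\|_{L^2(B(y_n,a))}\to 0$. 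Cheeger--Gromov compactness lets one pass to a smooth limit $(X_\infty,g_\infty,y_\infty)$ of $(X,g,y_n)$; a localized version of step (i), in which the controlling norm is $\|\sigma_n\|_{L^2(B(y_n,R'))}$ on a slightly larger ball rather than the global $L^2$-norm, should make the pullbacks $\widetilde{\sigma}_n$ uniformly bounded in $C^\infty_{loc}$, producing a nonzero $C^\infty$ limit $\sigma_\infty$ on $X_\infty$ that vanishes on $B(y_\infty,a)$. The iterated bounds $\|P^k\sigma_n\|_{L^2}\leq\Lambda^k\|\sigma_n\|_{L^2}$ would localize and pass to the limit, identifying $\sigma_\infty$ as lying in the spectral subspace of the limit operator $P_\infty$ below $\Lambda$; Aronszajn-type unique continuation for second-order elliptic operators would then force $\sigma_\infty\equiv 0$, contradicting the normalization.

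The main obstacle is the localization required in step (iii): the spectral condition $\sigma\in V_\lambda$ is inherently global, and transferring the uniform regularity of step (i) into bounds that depend only on $\|\sigma_n\|_{L^2}$ on a fixed ball around $y_n$ (rather than on $\|\sigma_n\|_{L^2(X)}$) is the key technical ingredient, since under the natural normalization $\|\sigma_n\|_{L^2(B(y_n,R))}=1$ the global $L^2$-norm is completely unconstrained. Once such a localized regularity statement is in place, the Cheeger--Gromov extraction and the final unique continuation are standard; but it is precisely here, at the interface between the global spectral cutoff and the local elliptic theory, that the technicalities concentrate.
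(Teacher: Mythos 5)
Your steps (i) and (ii) are fine, but the argument collapses exactly at the point you flag as ``the main obstacle'': step (iii) is not a technicality to be filled in later, it is the entire content of the theorem, and the route you propose for it cannot work as stated. The fundamental problem is that membership in $V_\lambda$ does not localize: an element of a spectral subspace satisfies no differential equation on a ball (in general the spectrum is continuous, so $\sigma$ is not even a finite combination of eigenfunctions), and the only information you have is the global bound $\|P^k\sigma\|_{L^2(X)}\leq \Lambda^k\|\sigma\|_{L^2(X)}$. Consequently (a) under the normalization $\|\sigma_n\|_{L^2(B(y_n,R))}=1$ you cannot get uniform $C^\infty_{loc}$ bounds for the Cheeger--Gromov extraction, because interior elliptic estimates control $\sigma_n$ on a ball only through $\|P\sigma_n\|_{L^2}$ on a slightly larger ball, and that quantity is bounded only by the \emph{global} $L^2$-norm, which is unconstrained; and (b) even granting a nonzero smooth limit $\sigma_\infty$ vanishing on $B(y_\infty,a)$, there is no local equation or localizable spectral condition it satisfies, so Aronszajn-type unique continuation does not apply. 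Identifying $\sigma_\infty$ with an element of the spectral subspace of a limit operator $P_\infty$ requires global $L^2$ control that the local normalization destroys, and, worse, the qualitative statement ``an element of a spectral subspace vanishing on an open set vanishes identically'' is essentially the theorem itself (in quantitative form), so invoking it for the limit is circular. A compactness/contradiction scheme also cannot by itself produce the explicit dependence of $C_\lambda$ on $\lambda$.

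The paper resolves precisely this global-versus-local tension by the Lebeau--Robbiano mechanism rather than by soft limits: one passes to the elliptic extension $F_t=\frac{\sinh(t\sqrt P)}{\sqrt P}\,\sigma$ on $X\times\R_{\geq 0}$, which satisfies $\bigl(-\tfrac{\partial^2}{\partial t^2}+P\bigr)F_t=0$, $F_0=0$, $\partial_t F_t|_{t=0}=\sigma$, so that the global spectral hypothesis is converted into a genuinely local PDE for $F$ with Cauchy data $\sigma$ on $N_a(Y)\times\{0\}$. Carleman estimates for $Q=-\partial_t^2+P$ (first near the net with weight $e^{-t-d(x,y)^6}$, then across small hypersurface pieces to propagate outward, uniformly thanks to bounded geometry) give an interpolation inequality
$\|F\|_{H^1(X\times N_\tau(t_0))}\leq C\|F\|^{\nu}_{H^1(X\times[0,T])}\,\|\sigma\|^{1-\nu}_{L^2(N_a(Y))}$,
and the functional calculus bounds $\|F_t\|,\ \langle PF_t,F_t\rangle,\ \|\partial_tF_t\|$ by $e^{cT\sqrt\lambda}\|\sigma\|$, which after the interpolation yields the stated inequality with an explicit $C_\lambda$. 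If you want to salvage your outline, the missing ingredient is exactly such a quantitative propagation-of-smallness estimate for the extended problem; the covering/summation in your step (ii) then corresponds to the paper's iteration of the Carleman estimate across the net (Lemmas on interpolation and their iterated version), not to a pointwise local inequality proved by compactness.
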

Theorem \ref{thm:uniqueCont} is essentially motivated by  \cite{MR1362555,MR3041662}. It plays an essential role in the proof of the main theorem regarding the scalar curvature rigidity/scalar curvature distribution.

\subsection{Local Carleman estimate}

In this subsection, we will prove a local Carleman estimate for elliptic differential operators on a discrete net in a complete manifold $(X^n, g)$.

Let $X\times\R_{\geq 0}$ be the product space of $X$ and the half real line. In the following proof, we will use the function $\varphi$ as a key ingredient in variant circumstances. To begin with, we consider the simple case when $y$ is a singleton in $X$ and give a detailed computation. Given any fixed point $y\in X$, we consider a function on $X\times\R_{\geq 0}$
$$\varphi(x,t)=e^{-t-d(x,y)^6},$$
whose derivatives along $X$ with order $\leq 5$ are small near $y$. 

{Given a fixed small $a>0$, let $\mathcal F$ be the space of smooth sections in $E$ over $X\times\R_{\geq 0}$ that are supported in $\{\varphi< a\}$ and vanish on $X\times\{0\}$. }Let
$$Q=-\frac{\partial^2}{\partial t^2}+P$$
be a differential operator that acts on $\mathcal F$. For any $h>0$, we define
$$Q_{\varphi}=e^{\varphi/h}\cdot Q\cdot e^{-\varphi/h}.$$

We first prove
\begin{lemma}\label{lemma:local}
	There exists $C_1,C_2>0$ such that for any $f\in\mathcal F$, we have
	
		\begin{equation*}
		\frac 1 h\|\frac{\partial f }{\partial t} \|^2+\frac 1 h\|\nabla f\|^2+\frac{1}{h^3}\|f\|^2  \leq 	C_1\|Q_\varphi f\|^2+\frac{C_2}{h}\int_{X\times\{0\}}|\frac{\partial f}{\partial t}|^2
	\end{equation*}
	for any $h>0$ sufficiently small.
\end{lemma}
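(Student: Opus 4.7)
The plan is to apply the classical H\"ormander--Carleman strategy, decomposing the conjugated operator $Q_\varphi = e^{\varphi/h} Q e^{-\varphi/h}$ into its formally self-adjoint and skew-adjoint parts with respect to the $L^2$ inner product on $X \times \mathbb{R}_{\geq 0}$. A direct computation, using $\varphi_t = -\varphi$ for $\varphi = e^{-t - d(x,y)^6}$, gives
$$Q_\varphi = \Big(-\partial_t^2 + P_0 - \tfrac{\varphi_t^2 + |\nabla_x \varphi|^2}{h^2}\Big) + \Big(\tfrac{2\varphi_t}{h}\partial_t + \tfrac{2}{h}\nabla_x \varphi \cdot \nabla_x + \tfrac{\Delta\varphi}{h}\Big) + \text{lower order},$$
where $P_0$ denotes the principal part of $P$. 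I would then write $Q_\varphi = A + B$ with $A = A^*$ and $B = -B^*$ on $L^2$, shifting appropriate zero-order pieces into $A$ so that $B$ is genuinely skew.

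Next I would expand
$$\|Q_\varphi f\|^2 = \|A f\|^2 + \|B f\|^2 + \langle [A,B]f, f\rangle + \mathcal{B}(f),$$
where $\mathcal{B}(f)$ collects the boundary terms arising at $t = 0$. Since $f$ vanishes on $X \times \{0\}$ but $\partial_t f$ need not, $\mathcal{B}(f)$ reduces to a multiple of $\int_{X\times\{0\}} |\partial_t f|^2$, which accounts for the last term in the inequality. The heart of the matter is the commutator: at leading order, $[-\tfrac{\varphi_t^2 + |\nabla \varphi|^2}{h^2}, B]$ produces an order-$h^{-3}$ multiplication operator proportional to $\varphi_t^2 \varphi_{tt} + \hess(\varphi)(\nabla \varphi, \nabla \varphi)$, which is strictly positive on $\supp(f) \subset \{\varphi < a\}$ by the explicit form of $\varphi$ and $\varphi_{tt} = \varphi$. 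Sub-leading commutators $[-\partial_t^2 + P_0, B]$ generate order-$h^{-1}$ positive terms controlling $|\partial_t f|^2$ and $|\nabla f|^2$ through $\varphi_{tt}$ and $\hess_x \varphi$; bounded geometry ensures uniformity of these estimates, while errors from the bundle curvature on $E$ and from the lower-order part of $P$ are of strictly lower order in $h^{-1}$ and absorbed into the main terms once $h$ is small.

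The principal obstacle will be verifying that $[A, B]$ yields the correct positive quadratic forms with the specific weights $1/h$ and $1/h^3$ uniformly across $X$. Delicate cancellations in the expansion of $[A,B]$ must be tracked carefully, and one must ensure that the curvature corrections coming from the action of $P$ on the bundle $E$ and from the Riemannian metric (through the bounded geometry hypothesis) do not destroy the sign of the leading commutator contributions; the smallness condition $\varphi < a$ is precisely what keeps the $O(a^2/h^2)$ corrections arising from the conjugation from overwhelming the dominant positive terms.
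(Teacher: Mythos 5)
Your overall architecture matches the paper's: split $Q_\varphi$ into its self-adjoint part $A$ and skew-adjoint part $B$, expand $\|Q_\varphi f\|^2=\|Af\|^2+\|Bf\|^2+\langle[A,B]f,f\rangle$ plus a boundary contribution at $t=0$ (which, since $f$ vanishes there but $\partial_t f$ need not, is exactly the $\frac{C_2}{h}\int_{X\times\{0\}}|\partial_t f|^2$ term), and then prove positivity of the commutator modulo $\|Af\|^2$ for $h$ small. Up to that point you are on the paper's track.

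The genuine gap is in how you produce the term $\frac 1h\|\nabla f\|^2$. You attribute it to commutator contributions of the form $\hess_x\varphi$ (and to the leading positivity $\varphi_t^2\varphi_{tt}+\hess(\varphi)(\nabla\varphi,\nabla\varphi)$). But the weight $\varphi(x,t)=e^{-t-d(x,y)^6}$ is chosen precisely so that its spatial derivatives up to high order are \emph{small} on the relevant support; in the paper all terms involving $\nabla_x\varphi$ are dumped into negligible remainders $\mathscr R_1,\mathscr R_2,\mathscr R_3$, so the weight is effectively a function of $t$ alone. For such a weight the commutator $[A,B]$ can only give the $\frac1{h^3}\|f\|^2$ and $\frac1h\|\partial_t f\|^2$ terms; it cannot generate a uniformly positive multiple of $\frac1h\|\nabla f\|^2$, because the spatial Hessian of the weight is tiny (and has no favorable sign in general), so your claimed order-$h^{-1}$ control of $|\nabla f|^2$ "through $\hess_x\varphi$" fails. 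The missing idea is the paper's use of ellipticity: one rewrites $\frac{\dot\varphi^2}{h^2}=-A-\partial_t^2+P+\mathscr R_1$, so that $\frac1h\langle\frac{\dot\varphi^2}{h^2}f,f\rangle$ contains $\frac1h\langle Pf,f\rangle$, and then invokes the G\aa rding inequality \eqref{eq:Garding} to convert this into $\frac ch\|\nabla f\|^2-\frac{c'}h\|f\|^2$, at the cost of an error of size $\sqrt h\,\|Af\|^2$ (plus $h^{-5/2}\|f\|^2$, dominated by the $h^{-3}$ term) which is absorbed into $\|Af\|^2$ once $h$ is small. Without this step, or some substitute for it (e.g.\ a different weight with genuinely uniformly convex spatial dependence, which is not the $\varphi$ fixed in the statement), the claimed inequality for the full gradient does not follow from your argument.
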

\begin{proof}
	Let $A$ and $B$ be the self-adjoint and anti-self-adjoint parts of $Q_\varphi$ respectively, namely
	$$A=\frac{Q_\varphi+Q_\varphi^*}{2},\ B=\frac{Q_\varphi-Q_\varphi^*}{2}.$$
	A direct calculation  shows that
	$$A=Q-\frac{\dot\varphi^2}{h^2}+\mathscr R_1=-\frac{\partial^2}{\partial t^2}+P-\frac{\dot\varphi^2}{h^2}+\mathscr R_1,$$
	$$B=2\frac{\dot\varphi}{h}\frac{\partial}{\partial t}+\frac{\ddot \varphi}{h}+\mathscr R_2=\frac{\partial}{\partial t}\frac{\dot\varphi}{h} + \frac{\dot\varphi}{h}\frac{\partial}{\partial t} +\mathscr R_2.$$
	Here, we denote by $\dot\varphi, \ddot\varphi$  the derivatives of $\varphi$ with respect to $t\in\R$, and  $\mathscr R_1$ and $\mathscr R_2$ are the remainders given by the derivatives of $\varphi$ along $X$, which are small by the construction.
	
	Note that $Q_\varphi=A+B$, we have
	$$\|Q_\varphi f\|^2=\|Af\|^2+\|Bf\|^2+\langle Af,Bf\rangle+\langle Bf,Af\rangle.$$
	Since  $f$ is compactly supported within $X\times[0,a)$ and $f(x,0)=0$ for any $x \in X$, we have
	$$\langle Af,Bf\rangle=-\langle BAf,f\rangle$$
	 and 
	 $$\langle Bf,Af\rangle=\langle ABf,f\rangle-\langle Bf,\frac{\partial f}{\partial t}\rangle\Big|_0^a=\langle ABf,f\rangle+\int_{X\times \{0\}}\frac{2\dot\varphi}{h}|\frac{\partial f}{\partial t}|^2.$$
	 It follows that
	 $$\|Q_\varphi f\|^2-\int_{X\times \{0\}}\frac{2\dot\varphi}{h}|\frac{\partial}{\partial t}f|^2=\|Af\|^2+\|Bf\|^2+\langle[A,B]f,f\rangle.$$
	 Here $[A, B] = AB - BA$.
	 A direct computation shows that
	 \begin{align*}
	 	[A,B]=&[-\frac{\partial^2}{\partial t^2}-\frac{\dot\varphi^2}{h^2},\frac{\partial}{\partial t}\frac{\dot\varphi}{h} + \frac{\partial}{\partial t}\frac{\dot\varphi}{h}]+\mathscr R_3\\
	 	=&4\frac{\dot\varphi^2\ddot\varphi}{h^3}
	 	-\frac{\partial}{\partial t}\Big(2\frac{\ddot\varphi}{h}\frac{\partial}{\partial t}+\frac{\dddot\varphi}{h}\Big)-\Big(2\frac{\ddot\varphi}{h}\frac{\partial}{\partial t}+\frac{\dddot\varphi}{h}\Big)\frac{\partial}{\partial t}
	 	+\mathscr R_3.
	 \end{align*}
	 Here the remainder $\mathscr R_3$ is also small and will be ignored. By construction, we have $\dot\varphi^2\ddot\varphi>1/2$ and $\ddot\varphi>1/2$ on the support of $f$ if $a$ is small enough. Furthermore, by line \eqref{eq:Garding} and Cauchy--Schwarz inequality, we have
	 \begin{align*}
	 	\langle\frac{\dot\varphi^2}{h^3}f,f\rangle=&\frac 1 h\langle-\frac{\partial^2}{\partial t^2}f+Pf-Af+\mathscr R_1 f,f\rangle\\
	 	\geq& \frac 1 h\|\frac{\partial}{\partial t}f\|^2+\frac 1 h c\|\nabla f\|^2-\frac 1 h c'\|f\|^2\\
	 	&-\frac{h^{1/2}}{2}\|Af\|^2-\frac{1}{2h^{5/2}}\|f\|^2+\frac 1 h\langle\mathscr R_1 f,f\rangle,
	 \end{align*}
	 and
	 \begin{align*}
	 	&\langle-\frac{\partial}{\partial t}\Big(2\frac{\ddot\varphi}{h}\frac{\partial}{\partial t}+\frac{\dddot\varphi}{h}\Big)f-\Big(2\frac{\ddot\varphi}{h}\frac{\partial}{\partial t}+\frac{\dddot\varphi}{h}\Big)\frac{\partial}{\partial t} f,f\rangle\\
	 	=&\frac 1 h\langle{4\ddot\varphi}\frac{\partial}{\partial t}f,\frac{\partial}{\partial t}f\rangle +
	 	\frac 1 h\langle2\dddot\varphi f,\frac{\partial}{\partial t} f\rangle\\
	 	\geq &\frac 2 h\|\frac{\partial}{\partial t}f\|^2 -\frac{1}{h^{1/2}}\|\frac{\partial}{\partial t}f\|^2
	 	-\frac{1}{h^{3/2}}\langle |\dddot\varphi|^2f,f\rangle.
	 \end{align*}
	 Hence there exists $c_1>0$ such that
	 \begin{equation}\label{eq:[A,B]}
	 	\langle[A,B] f,f\rangle\geq c_1\Big(\frac 1 h\|\frac{\partial}{\partial t} f\|^2+\frac 1 h\|\nabla f\|^2+\frac{1}{h^3}\|f\|^2\Big)-c_1\sqrt h\|Af\|^2.
	 \end{equation}
	 This inequality indicates that $[A, B]$ is positive modulo $A$. Clearly $\|Bf\|^2\geq 0$.
	 This finishes the proof for $h$ sufficiently small.
\end{proof}

We remark that the key ingredient that proves Lemma \ref{lemma:local} is the non-negativity condition \eqref{eq:[A,B]}. This follows from the fact that the function $e^x$ has a positive second-order derivative, and the function $\varphi$ has a non-zero derivative along some direction. The estimate \eqref{eq:[A,B]} holds more generally if $\varphi$ satisfies H\"{o}rmander's condition \cite[Theorem 27.1.11]{HormanderIV}. See \cite[Section 3]{MR3041662}. The following lemma is directly from Lemma \ref{lemma:local} by substituting $f=e^{\varphi/h}g$.
\begin{lemma}\label{lemma:local2}
	There exists $C_1, C_2>0$ such that for any $g\in\mathcal F$, we have
	\begin{align*}
		& \int_{X\times\R_{\geq 0}}\Big(\frac 1 h |\frac{\partial g}{\partial t} |^2+\frac 1 h|\nabla g|^2+\frac{1}{h^3}|g|^2\Big)e^{2\varphi/h}\\
		\leq & C_1 \int_{X\times\R_{\geq 0}}e^{2\varphi/h}|Qg|^2+\frac{C_2}{h}\int_{X\times\{0\}}|\frac{\partial g}{\partial t}|^2.
	\end{align*}
		for any $h>0$ sufficiently small.
\end{lemma}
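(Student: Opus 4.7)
The plan is to derive Lemma \ref{lemma:local2} directly from Lemma \ref{lemma:local} by making the substitution $f = e^{\varphi/h} g$. The key algebraic fact is that $Q_\varphi$ was defined precisely as $e^{\varphi/h} Q e^{-\varphi/h}$, so $Q_\varphi f = e^{\varphi/h} Q g$ and therefore $\|Q_\varphi f\|^2 = \int e^{2\varphi/h}|Qg|^2$, which matches the desired right-hand side. Similarly $\|f\|^2 = \int |g|^2 e^{2\varphi/h}$, producing the weighted $L^2$-norm of $g$ that appears on the target left-hand side. Moreover, since every $g \in \mathcal{F}$ is supported in $\{\varphi<a\}$ and vanishes on $X \times \{0\}$, the same holds for $f$, so the substituted section remains in $\mathcal F$ and Lemma \ref{lemma:local} applies.

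For the gradient terms, the product rule gives $\partial_t f = e^{\varphi/h}(\partial_t g + (\dot\varphi/h) g)$ and an analogous identity for $\nabla f$. Applying the elementary inequality $|a|^2 \leq 2|a+b|^2 + 2|b|^2$ yields
\[
|\partial_t g|^2 e^{2\varphi/h} \leq 2|\partial_t f|^2 + \tfrac{2\dot\varphi^2}{h^2}|f|^2,
\]
and the analogous bound with $\nabla$ in place of $\partial_t$. On the support $\{\varphi<a\}$, both $\dot\varphi$ and $|\nabla_X \varphi|$ are uniformly bounded by direct inspection of $\varphi(x,t) = e^{-t-d(x,y)^6}$. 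Consequently, after dividing by $h$, the nuisance terms of the form $(\text{const}/h^3)|f|^2$ can be absorbed into the reservoir $\frac{1}{h^3}\|f\|^2$ already present on the left-hand side of Lemma \ref{lemma:local}, provided $h$ is chosen sufficiently small so the reservoir dominates.

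For the boundary term, since $g$ vanishes on $X \times \{0\}$, the product rule gives $\partial_t f|_{t=0} = e^{\varphi(x,0)/h}\, \partial_t g|_{t=0}$, so the boundary contribution becomes $\frac{C_2}{h}\int_{X\times\{0\}} e^{2\varphi(x,0)/h}|\partial_t g|^2$. Since $\varphi$ is uniformly bounded on the support by the parameter $a$, the weight $e^{2\varphi(x,0)/h}$ is absorbed into the constant $C_2$. The main obstacle in this plan is ensuring that the cross-term absorption goes through without spoiling the coefficient balance: specifically, one needs the constant multiplying $|f|^2/h^3$ in the expanded gradient estimates to remain strictly smaller than the coefficient $1$ available from Lemma \ref{lemma:local}. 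This is handled by first rescaling Lemma \ref{lemma:local} (e.g., shrinking its leading $1/h^3$ to $1/(2h^3)$ by a harmless constant adjustment) and then taking $h$ small enough that all error terms are swallowed.
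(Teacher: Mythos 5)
Your route is exactly the paper's: Lemma \ref{lemma:local2} is deduced from Lemma \ref{lemma:local} by substituting $f=e^{\varphi/h}g$, and your treatment of $Q_\varphi f=e^{\varphi/h}Qg$, of $\|f\|^2$, and of the gradient terms is essentially correct. One presentational point: the ``absorption'' discussion for the interior terms is misconceived, since the nuisance term $2\sup|\dot\varphi|^2\, h^{-3}\|f\|^2$ scales in $h$ exactly like the reservoir $h^{-3}\|f\|^2$, so taking $h$ small cannot make the latter dominate; but no absorption is needed at all. Because $\dot\varphi$ and $\nabla_X\varphi$ are bounded, your elementary inequality already gives $\int e^{2\varphi/h}\big(h^{-1}|\partial_t g|^2+h^{-1}|\nabla g|^2+h^{-3}|g|^2\big)\leq C\big(h^{-1}\|\partial_t f\|^2+h^{-1}\|\nabla f\|^2+h^{-3}\|f\|^2\big)$ for a fixed $C$, and one simply enlarges $C_1,C_2$ by the factor $C$.

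The genuine gap is in your boundary step. Since $g=0$ on $X\times\{0\}$, indeed $\partial_t f|_{t=0}=e^{\varphi(x,0)/h}\,\partial_t g|_{t=0}$, so the substitution produces the boundary term $\frac{C_2}{h}\int_{X\times\{0\}}e^{2\varphi(x,0)/h}|\partial_t g|^2$. But $\varphi=e^{-t-d(x,y)^6}>0$, so on the support the weight is only bounded by $e^{2a/h}$, which blows up as $h\to 0$; boundedness of $\varphi$ by $a$ does \emph{not} give an $h$-independent bound on $e^{2\varphi/h}$, so it cannot be ``absorbed into the constant $C_2$.'' What the substitution honestly yields is the estimate with the weighted boundary term $\frac{C_2}{h}\int_{X\times\{0\}}e^{2\varphi/h}|\partial_t g|^2$ (equivalently, an $h$-dependent prefactor of size $e^{c/h}$); removing the weight as in the stated lemma would need a separate argument or a weight with $\varphi\leq 0$ at $t=0$. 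In fairness, this imprecision is shared by the paper, whose one-line proof is the same substitution and whose statement also drops the weight, and the weighted version suffices for all later uses: in Lemma \ref{lemma:interpolation1} the boundary term in any case acquires an exponential prefactor $e^{-2b_2/h}$, and Lemma \ref{lemma:interpolation} tolerates factors of the form $e^{q/h}$. Still, as a proof of the lemma exactly as stated, your absorption claim is an error that should be corrected or flagged.
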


Now we consider the non-compact case. Let $Y$ be a discrete net of $X$.
	Given any fixed small $a>0$, let $\mathcal F_Y$ be the space of smooth sections $g$ of $E$ over $X\times\R_{\geq 0}$ that satisfy
	\begin{itemize}
		\item $g$ is supported in $N_a(Y)\times [0,a)$,
		\item $g|_{X\times \{0\}}=0$.
	\end{itemize}
Let $\varphi_Y$ be a function on $X\times\R_{\geq 0}$ defined by
$$\varphi_Y(x,t)=e^{-t-d(x,y)^6}$$
on $B_{2a}(y)\times[0,2a]$ for any $y\in Y$. We assume that $a$ is small enough  so that the $4a$-neighborhoods of points in $Y$ are disjoint in Definition \ref{def:net}. The value of $\varphi_Y$ on the rest of points in $X$ can be arbitrary.

As it is pointed out in line \eqref{eq:[A,B]}, the essential part for the proof of Lemma \ref{lemma:local} is the non-negativity condition \eqref{eq:[A,B]} on the support of $f$. Here, we note that if $f\in\mathcal F_Y$, line \eqref{eq:[A,B]}  holds for $\varphi_Y$ as well. Thus, Lemma \ref{lemma:local2} still holds for the non-compact case.
\begin{lemma}\label{lemma:local3}
	There exists $C_1,C_2>0$ such that for any $g\in\mathcal F_Y$, we have
	\begin{equation}\label{eq:local3}
	\begin{split}
			& \int_{X\times\R_{\geq 0}}\Big(\frac 1 h|\frac{\partial g}{\partial t} |^2+\frac 1 h|\nabla g|^2+\frac{1}{h^3}|g|^2\Big)e^{2\varphi_Y/h}\\
			\leq &C_1\int_{X\times\R_{\geq 0}}e^{2\varphi_Y/h}|Qg|^2+\frac{C_2}{h}\int_{X\times\{0\}}|\frac{\partial g}{\partial t}|^2\\	
	\end{split}
	\end{equation}
		for any $h>0$ sufficiently small.
\end{lemma}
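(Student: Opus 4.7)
The plan is to reduce the estimate to the single-point version already established in Lemma \ref{lemma:local2}. By the definition of a discrete net, we may choose $a>0$ small enough that the $4a$-neighborhoods of distinct points of $Y$ are pairwise disjoint. Then any $g\in\mathcal F_Y$ decomposes uniquely as a locally finite sum
\[
g=\sum_{y\in Y}g_y,
\]
where $g_y$ is the restriction of $g$ to $B_{2a}(y)\times[0,2a]$ (extended by zero outside). Each $g_y$ is supported in a single ``chimney'' over $y$ and vanishes on $X\times\{0\}$, so it lies in the space $\mathcal F$ of Lemma \ref{lemma:local2} with $y$ chosen as the distinguished point. Moreover, by the very construction of $\varphi_Y$, on the support of $g_y$ it coincides with the single-point weight $\varphi_y(x,t)=e^{-t-d(x,y)^6}$ already used in Lemmas \ref{lemma:local} and \ref{lemma:local2}.

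First I would apply Lemma \ref{lemma:local2} to each $g_y$ with the weight $\varphi_y$. The proof of that lemma rests on three ingredients: the G\r{a}rding inequality \eqref{eq:Garding} for $P$, the commutator computation producing \eqref{eq:[A,B]}, and the smallness of the remainder terms $\mathscr R_i$ on the support of $f$, which involve $X$-derivatives of $\varphi_y$ of order $\leq 5$ near $y$. Under the bounded geometry hypothesis, each of these is controlled uniformly in $y$: the G\r{a}rding constants $c,c'$ are global, and the pointwise size of the $X$-derivatives of $\varphi_y$ on $B_{2a}(y)$ depends only on $n$ and the curvature/injectivity-radius bounds, not on the particular point $y$. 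Hence one may take $C_1,C_2$ independent of $y\in Y$.

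Finally I would sum the resulting inequalities over $y\in Y$. Because the supports of the $g_y$ (and, on these supports, of $\frac{\partial g_y}{\partial t}$ and $\nabla g_y$) are pairwise disjoint, the left-hand sides, right-hand sides, and boundary integrals over $X\times\{0\}$ add up precisely to the global quantities in \eqref{eq:local3}; the identification $\varphi_Y=\varphi_y$ on each supp$(g_y)$ ensures the exponential factors match up. This yields the claimed inequality.

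The main obstacle, beyond careful bookkeeping, is the uniformity of the constants in $y$. Without bounded geometry the G\r{a}rding constants and the size of the metric-dependent remainders $\mathscr R_i$ entering \eqref{eq:[A,B]} could degenerate from one point of $Y$ to another, destroying any hope of a global $C_1,C_2$. Once this uniformity is guaranteed, the rest of the argument is essentially a disjoint-sum decomposition of the local Carleman estimate.
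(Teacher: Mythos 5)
Your argument is correct, but it is organized differently from the paper's. The paper does not decompose $g$ at all: it observes that the only place the specific weight enters the proof of Lemma \ref{lemma:local} is the positivity of the commutator term \eqref{eq:[A,B]} on the support of the section, and since $\varphi_Y$ coincides with the single-point weight $e^{-t-d(x,y)^6}$ near each $y\in Y$, the same global integration-by-parts computation runs verbatim for any $f\in\mathcal F_Y$, giving \eqref{eq:local3} in one stroke. You instead split $g=\sum_{y}g_y$ into disjointly supported pieces (legitimate, since the $4a$-neighborhoods of the points of $Y$ are disjoint, so the cut-off takes place where $g$ vanishes and each $g_y$ is smooth and compactly supported near a single $y$), apply Lemma \ref{lemma:local2} to each piece with its own basepoint, and sum; because $Q$ is a local operator and $\varphi_Y=\varphi_y$ on $\supp g_y$, the weighted integrals on both sides and the boundary terms add up exactly, even for infinitely many $y$. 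Both routes ultimately rest on the same fact, which you correctly isolate as the crux: under bounded geometry the G\r{a}rding constants are global and the $X$-derivatives of $d(\cdot,y)^6$ are uniformly small on uniformly small balls, so $C_1$, $C_2$ and the admissible range $h\in(0,h_0)$ can be taken independent of $y$. Your version has the merit of making this uniformity explicit (the paper leaves it implicit) and of making transparent that the arbitrary values of $\varphi_Y$ away from $N_{2a}(Y)$ play no role; the paper's version is shorter and avoids the decomposition bookkeeping. Either argument establishes the lemma.
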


Moreover,  we will consider another type of function $\varphi$ along directions in $X$. Given a discrete net $Y$ of $X$, let $Z=\{Z_i\}$ be a collection of pieces of oriented hypersurfaces, where each piece is located near a point of $Y$. We fix a small number $t_0>0$, and points $z_i\in Z_i$. Pick smooth functions $v_i$ supported near $Z_i$ such that $|\nabla v_i|=1$, $Z_i$ is the level set $\{v_i=0\}$, and $\nabla v_i$ is pointing outward from $Z_i$. We define $\varphi_Z$ on $X\times\R_{\geq 0}$ as
$$\varphi_Z(x,t)=-v_i-d((x,t),(z_i,t_0))^6$$
near each $Z_i$. The value of $\varphi$ away from $Z_i$ is arbitrary.

Let $\mathcal F_Z$ be the collections of smooth sections of $E$ over $X\times\R_{\geq 0}$ that are supported in a small neighborhood of  $Z\times\{t_0\}$. The same proof of Lemma \ref{lemma:local3} applies to the function $\varphi_Z$.
\begin{lemma}\label{lemma:localZ}
	There exists $C_1>0$ such that for any $g\in\mathcal F_Z$, we have
	\begin{equation} \label{eq:localZ}
		\int_{X\times\R_{\geq 0}}\Big(\frac 1 h|\frac{\partial g}{\partial t}g|^2+\frac 1 h|\nabla g|^2+\frac{1}{h^3}|g|^2 \Big)e^{2\varphi_Z/h} \leq C_1\int_{X\times\R_{\geq 0}}e^{2\varphi_Z/h}|Qg|^2
	\end{equation}
	for any $h>0$ sufficiently small.
\end{lemma}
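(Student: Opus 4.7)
The plan is to adapt the proof of Lemma \ref{lemma:local3} essentially verbatim, exploiting two structural features that simplify the argument in the present setting. First, the Carleman weight is now $\varphi_Z$ instead of $\varphi_Y$; second, any $g\in\mathcal F_Z$ is supported in a small neighborhood of $Z\times\{t_0\}$ lying strictly in the interior $X\times\R_{>0}$, so that no boundary contribution at $X\times\{0\}$ arises. This is why the right-hand side of \eqref{eq:localZ} contains only the $\|Qg\|^2$ term and no boundary integral.

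Concretely, I would substitute $f=e^{\varphi_Z/h}g$ to reduce \eqref{eq:localZ} to an a priori estimate of the form
\[
\frac{1}{h}\|\partial_t f\|^2+\frac{1}{h}\|\nabla f\|^2+\frac{1}{h^3}\|f\|^2 \leq C\|Q_{\varphi_Z}f\|^2,
\]
where $Q_{\varphi_Z}=e^{\varphi_Z/h}\,Q\,e^{-\varphi_Z/h}$. Splitting $Q_{\varphi_Z}=A+B$ into its self-adjoint and anti-self-adjoint parts and integrating by parts, the absence of boundary terms yields the clean identity $\|Q_{\varphi_Z}f\|^2=\|Af\|^2+\|Bf\|^2+\langle[A,B]f,f\rangle$, so the desired bound reduces to a commutator positivity estimate modulo $\|Af\|^2$, which can be absorbed for $h$ small.

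The central step is to establish the analogue of \eqref{eq:[A,B]} for $\varphi_Z$. The mechanism is the same as in Lemma \ref{lemma:local}, but the role played there by $\partial_t$ (acting on the factor $e^{-t}$ in $\varphi_Y$) is now played by the spatial derivative along $\nabla v_i$: on a sufficiently small neighborhood of $(z_i,t_0)$ we have $|\nabla v_i|=1$, giving a non-vanishing linear gradient contribution, while the perturbation $-d((\cdot,\cdot),(z_i,t_0))^6$ supplies a strictly positive Hessian contribution. This is precisely H\"{o}rmander's pseudoconvexity condition for the elliptic operator $Q=-\partial_t^2+P$ (cf.\ \cite[Theorem 27.1.11]{HormanderIV} and the discussion in \cite[Section 3]{MR3041662}), and it yields the required lower bound with the correct powers of $h$. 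The main obstacle I anticipate is verifying this pseudoconvexity carefully: unlike in Lemma \ref{lemma:local}, here the relevant direction of non-vanishing gradient is horizontal rather than vertical, so the commutator interacts with the full second-order operator $P$ rather than with a clean $\partial_t^2$, and one must show that the remainder terms $\mathscr{R}_j$ remain genuinely lower order after shrinking the supports to a sufficiently small neighborhood of each $z_i$. Once this commutator positivity is established, discarding $\|Bf\|^2\geq 0$ and undoing the substitution $g=e^{-\varphi_Z/h}f$ gives \eqref{eq:localZ}.
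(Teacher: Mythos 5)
Your proposal matches the paper's own proof, which simply observes that the Carleman computation of Lemma \ref{lemma:local} applies with $\varphi_Z$ in place of $\varphi_Y$ once the $t$-derivative is replaced by derivatives along the $\nabla v_i$-directions, and that the boundary term of \eqref{eq:local3} is absent because sections in $\mathcal F_Z$ are supported near $Z\times\{t_0\}$, away from $X\times\{0\}$. The only quibble is your attribution of the commutator positivity to the Hessian of the $-d(\cdot,(z_i,t_0))^6$ term (which vanishes at $(z_i,t_0)$ and serves only to keep the remainder terms small); as in the paper, the positivity analogous to \eqref{eq:[A,B]} rests on the convex, exponential-type structure of the weight along the direction of nonvanishing gradient, i.e.\ H\"ormander's condition, which you also correctly invoke, so the argument is unaffected.
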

Note that the calculation in Lemma \ref{lemma:local} applies to the function $\varphi_Z$ if we replace the $t$-direction derivative by the $\nabla v_i$-directions. Thus,  Lemma \ref{lemma:localZ} follows from a similar calculation. We also emphasize that, as $g$ is supported near $Z$, we only need the value of the function $\varphi_Z$ near $Z$ in the proof. Since $g$ vanishes away from $Z$, the boundary term, namely the second term in the right-hand side of line \eqref{eq:local3}, does not appear in line \eqref{eq:localZ}.
\subsection{Interpolation and unique continuation}

In this subsection, we will first prove an interpolation inequality for sections over $X\times\R_{\geq 0}$ and then derive the unique continuation theorem at the lower spectrum of elliptic operators from a net.

We begin with some elementary inequalities that deduce interpolation inequality from a Carleman estimate.
\begin{lemma}\label{lemma:interpolation}
	Let $\alpha,\beta,\gamma$ be positive numbers with $\alpha\leq A\beta$ for some $A>0$. If there exist $p,q>0$ and $h_0>0$ such that
	$$\alpha\leq e^{-p/h}\beta+e^{q/h}\gamma$$
	for any $h\in(0,h_0)$, then there exist $C>0$ and $\nu\in(0,1)$ that only depends on $A,p,q,h_0$ such that
	$$\alpha\leq C\beta^\nu\gamma^{1-\nu}.$$
\end{lemma}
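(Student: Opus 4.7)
The plan is to balance the two exponential terms on the right-hand side by optimizing the free parameter $h$, and to handle the degenerate case where the optimal $h$ falls outside the allowed range $(0,h_0)$ by invoking the a priori bound $\alpha\leq A\beta$.

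More concretely, formally setting $e^{-p/h}\beta=e^{q/h}\gamma$ yields $h^* = (p+q)/\log(\beta/\gamma)$, which lies in $(0,h_0)$ precisely when $\log(\beta/\gamma) > (p+q)/h_0$, i.e.\ when the ratio $\beta/\gamma$ exceeds the threshold $M \coloneqq e^{(p+q)/h_0}$. I would therefore split the argument into two regimes. In the regime $\beta/\gamma > M$, I choose $h = h^*\in(0,h_0)$, substitute into the hypothesis, and compute
\[
\alpha \leq e^{-p/h^*}\beta+e^{q/h^*}\gamma = 2\beta^{q/(p+q)}\gamma^{p/(p+q)},
\]
since $e^{-p/h^*} = (\gamma/\beta)^{p/(p+q)}$ and $e^{q/h^*} = (\beta/\gamma)^{q/(p+q)}$. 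This gives the desired estimate with $\nu = q/(p+q)\in(0,1)$ and constant $2$.

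In the complementary regime $\beta/\gamma\leq M$, the hypothesis alone is not strong enough to give a good bound, so instead I would use $\alpha\leq A\beta$ and rewrite this as
\[
\alpha \leq A\beta = A\beta^{\nu}\beta^{1-\nu} \leq A\,M^{1-\nu}\,\beta^{\nu}\gamma^{1-\nu},
\]
using $\beta\leq M\gamma$ in the factor $\beta^{1-\nu}$. Taking $C \coloneqq \max\{2,\,AM^{1-\nu}\}$ with the same $\nu = q/(p+q)$ unifies the two cases, and both $C$ and $\nu$ depend only on $A,p,q,h_0$ as required.

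The argument is essentially mechanical; the only subtlety I anticipate is bookkeeping the case split so that $\nu$ remains the same in both regimes and the constant $C$ is genuinely controlled by $A$, $p$, $q$, $h_0$ alone. There is no real analytic obstacle here, as the lemma is a pure elementary interpolation statement once one recognizes that the parameter $h$ should be chosen to equalize the two competing exponentials, with the a priori bound $\alpha\leq A\beta$ serving purely as a safety net in the regime where equalization is not permitted.
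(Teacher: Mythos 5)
Your proof is correct, and the overall strategy matches the paper's: choose $h$ to optimize the right-hand side, and fall back on the a priori bound $\alpha\le A\beta$ when the optimal $h$ falls outside $(0,h_0)$. The details of the degenerate case differ, though, and yours is slightly cleaner. The paper works with the true minimizer $h_*$ of $F(h)=e^{-p/h}\beta+e^{q/h}\gamma$ and, when $h_*\ge h_0$, further splits into $\beta\le\gamma$ (use $\alpha\le A\beta$) and $\gamma\le\beta$ (use monotonicity of $F$ on $(0,h_0)$ and estimate at $h=h_0$). You instead pick $h^*$ so that the two exponential terms are equal — which yields the same exponent $\nu=q/(p+q)$, with constant $2$ in place of the slightly sharper Young-type constant $(p+q)p^{-p/(p+q)}q^{-q/(p+q)}$ — and then absorb the entire complementary regime $\beta/\gamma\le M$ in one step by combining $\alpha\le A\beta$ with $\beta\le M\gamma$, avoiding the monotonicity sub-argument entirely. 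Your case split (on whether $\beta/\gamma$ exceeds $M=e^{(p+q)/h_0}$) is exhaustive, your $h^*$ genuinely lies in $(0,h_0)$ when invoked, and $C=\max\{2,\,AM^{1-\nu}\}$ depends only on $A,p,q,h_0$ as required.
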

\begin{proof}
	We set $\nu=\frac{q}{p+q}$ and  define the function $F(h)=e^{-p/h}\beta+e^{q/h}\gamma$ on $\R^+$. A direct calculation shows that  $F$ attains its unique minimum at the point
	$$h=h_*=\frac{\ln(p\beta)-\ln(q\gamma)}{p+q}, $$ and the minimum value is  $$F(h_*)=(p+q)p^{-\frac{p}{p+q}}q^{-\frac{q}{p+q}}\cdot \beta^\nu\gamma^{1-\nu}.$$
	
	Let us consider the following cases.
	
	\begin{itemize}
		\item If $h_*\leq h_0$, then the desired inequality follows directly.

		\item Assume that $h_*\geq h_0$.
		
		\begin{itemize}
			\item 	If $\beta\leq\gamma$, then we have obviously 
			$$\alpha\leq A\beta\leq A\beta^\nu\gamma^{1-\nu}.$$
			
			\item  If $\gamma\leq\beta$, then by the monotonicity of $F$ on $(0,h_0)$, we have
			\begin{align*}
				\alpha&\leq F(h_0)=e^{-p/h_0}\beta+e^{q/h_0}\gamma\\
				&\leq e^{-p/h_*}\beta+e^{q/h_0}\gamma=q^{\frac{p}{p+q}}p^{-\frac{p}{p+q}}\beta^\nu\gamma^{1-\nu}+e^{q/h_0}\gamma\\
				&\leq \Big(q^{\frac{p}{p+q}}p^{-\frac{p}{p+q}}+e^{q/h_0}\Big)\beta^\nu\gamma^{1-\nu},
			\end{align*}
			where the last inequality follows from $\gamma\leq\beta$. 
		\end{itemize}
	\end{itemize}
To summarize, we have shown that $\alpha\leq C\beta^\nu\gamma^{1-\nu}$ by setting
$$C=\max\Big\{(p+q)p^{-\frac{p}{p+q}}q^{-\frac{q}{p+q}},A,q^{\frac{p}{p+q}}p^{-\frac{p}{p+q}}+e^{q/h_0}\Big\}.$$

\end{proof}
\begin{lemma}\label{lemma:iteratedInterpolation}
	Suppose that  $\alpha_i>0$ for $i=0,1,\cdots,N$ and $\beta,\gamma$ are positive numbers with $\alpha_i\leq \beta$ for any $i$. If there exists $\nu\in(0,1)$ and $C\geq 1$ such that 
	$$\alpha_{k+1}\leq C\beta^\nu(\alpha_{k}+\gamma)^{1-\nu},\ k = 0, \cdots, N-1$$
	 then,
	$$\alpha_N\leq C'\beta^\mu(\alpha_{0}+\gamma)^{1-\mu},$$
	where $\mu=1-(1-\nu)^N$ and $C'=(2C)^{1+(1-\nu)+\cdots+(1-\nu)^{N-1}}$.
\end{lemma}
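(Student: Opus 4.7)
The plan is to argue by induction on $N$, with the base case $N=1$ immediate from the hypothesis (using $C\leq 2C$ to match the claimed constant $(2C)^{1}$). For the inductive step, suppose $\alpha_N \leq (2C)^{S_N}\beta^{\mu_N}(\alpha_0+\gamma)^{1-\mu_N}$ where $S_N=1+(1-\nu)+\cdots+(1-\nu)^{N-1}$ and $\mu_N=1-(1-\nu)^N$. Applying the hypothesis once more yields $\alpha_{N+1}\leq C\beta^\nu(\alpha_N+\gamma)^{1-\nu}$, into which I substitute the inductive bound. The elementary identities $\nu+\mu_N(1-\nu)=\mu_{N+1}$, $(1-\mu_N)(1-\nu)=1-\mu_{N+1}$, and $1+S_N(1-\nu)=S_{N+1}$ will then deliver the required exponents and constant, provided I can absorb the extra additive term $\gamma$ that appears in the recursion at each step.

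This absorption is the heart of the argument and I would handle it through a short case analysis. First, if $\alpha_0+\gamma \geq \beta$, then $\beta^{\mu_{N+1}}(\alpha_0+\gamma)^{1-\mu_{N+1}} \geq \beta \geq \alpha_{N+1}$ by the hypothesis $\alpha_i\leq\beta$, so the claim holds trivially. I may therefore assume $\alpha_0+\gamma \leq \beta$, which in particular forces $\gamma\leq\beta$. Next I split according to whether $\gamma$ is dominated by the inductive bound $(2C)^{S_N}\beta^{\mu_N}(\alpha_0+\gamma)^{1-\mu_N}$: in the dominated case, $\alpha_N+\gamma$ is at most twice this quantity, and raising to the power $1-\nu$ yields a factor $2^{1-\nu}\leq 2$ that combines with the outer $C$ to contribute exactly one more factor of $2C$, in agreement with $S_{N+1}=1+S_N(1-\nu)$. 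In the opposite case, $\alpha_N+\gamma \leq 2\gamma \leq 2(\alpha_0+\gamma)$, and the elementary inequality $r^{1-\nu}\leq r^{1-\mu_{N+1}}$ for $r=(\alpha_0+\gamma)/\beta\in(0,1]$ (using $\mu_{N+1}\geq\nu$) lets me upgrade the exponent $\nu$ on $\beta$ to $\mu_{N+1}$, completing the inductive step.

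The main obstacle I anticipate is the bookkeeping of the geometric-series constant $(2C)^{S_N}$ alongside the additive perturbation $\gamma$: unlike a purely multiplicative recursion, an additive term does not contract automatically under the power $1-\nu<1$, so the dichotomy above appears essentially forced if one wishes to preserve the clean exponent $\mu_N=1-(1-\nu)^N$. Once the large-$\gamma$ regime $\alpha_0+\gamma\geq\beta$ is disposed of via the \emph{a priori} bound $\alpha_i\leq\beta$, what remains is a routine verification of exponents and constants.
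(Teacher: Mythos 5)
Your induction is correct: the exponent identities $\nu+\mu_N(1-\nu)=\mu_{N+1}$, $(1-\mu_N)(1-\nu)=1-\mu_{N+1}$, $1+S_N(1-\nu)=S_{N+1}$ are right, the trivial case $\alpha_0+\gamma\geq\beta$ is disposed of by $\alpha_{N+1}\leq\beta$, and both branches of your dichotomy close the step with the claimed constant $(2C)^{S_{N+1}}$. The paper reaches the same bound by a slightly leaner route: it splits only on whether $\gamma\geq\beta$ (weaker than your $\alpha_0+\gamma\geq\beta$), and for $\gamma\leq\beta$ it observes that $\frac{\gamma}{\beta}\leq\bigl(\frac{\gamma}{\beta}\bigr)^{1-\nu}\leq C\frac{(\alpha_k+\gamma)^{1-\nu}}{\beta^{1-\nu}}$, so that adding $\gamma$ to the hypothesis yields the uniform one-step recursion $\frac{\alpha_{k+1}+\gamma}{\beta}\leq 2C\bigl(\frac{\alpha_k+\gamma}{\beta}\bigr)^{1-\nu}$ for every $k$; composing this $N$ times gives the result directly, with no per-step case analysis. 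In other words, the paper absorbs $\gamma$ by comparing it to the right-hand side of the recursion itself (where it is always dominated once $\gamma\leq\beta$), whereas you compare it to the accumulated inductive bound, which forces your second subcase and the extra exponent-monotonicity step $r^{1-\nu}\leq r^{1-\mu_{N+1}}$ for $r=(\alpha_0+\gamma)/\beta\leq 1$. Your version costs a little more bookkeeping but is equally valid and produces the same constant; the paper's normalization by $\beta$ is the ingredient that makes the dichotomy unnecessary.
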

\begin{proof}
	If $\gamma\geq\beta$, then $\alpha_0+\gamma\geq\beta$, then
	$$\alpha_N\leq \beta\leq\beta^\mu(\alpha_0+\gamma)^{1-\mu}.$$
	Now we assume $\gamma\leq \beta$, then we obtain that
	$$\frac{\gamma}{\beta}\leq \frac{\gamma^{1-\nu}}{\beta^{1-\nu}}\leq \frac{(\alpha_k+\gamma)^{1-\nu}}{\beta^{1-\nu}}\leq C\frac{(\alpha_k+\gamma)^{1-\nu}}{\beta^{1-\nu}}$$
	for any $k$.  Moreover, the assumption implies that
	$$\frac{\alpha_{k+1}}{\beta}\leq C\frac{(\alpha_k+\gamma)^{1-\nu}}{\beta^{1-\nu}}.$$
	Hence, we reach
	$$\frac{\alpha_{k+1}+\gamma}{\beta}\leq 2 C\frac{(\alpha_k+\gamma)^{1-\nu}}{\beta^{1-\nu}}.$$
	Therefore,
	$$\frac{\alpha_N}{\beta}\leq 2C\frac{(\alpha_{N-1}+\gamma)^{1-\nu}}{\beta^{1-\nu}}\leq \cdots\leq C'\frac{(\alpha_0+\gamma)^{(1-\nu)^N}}{\beta^{(1-\nu)^N}}.$$
	Equivalently,
	$$\alpha_N\leq C' \beta^\mu(\alpha_0+\gamma)^{1-\mu}.$$
	This finishes the proof.	
\end{proof}

We start from the following lemma by applying the construction in Lemma \ref{lemma:local3} first.
\begin{lemma}\label{lemma:interpolation1}
	Suppose that $X$ is a complete Riemannian manifold, and $Y$ is a discrete net in $X$, and $a$ is a small positive number. Given small positive numbers  $\tau \ll t_0<T\ll a$ and $a_1\ll a$, there exists $C>0$ and $\nu\in(0,1)$ such that,  for any smooth section $\sigma$ of $E$ over $X\times\R_{\geq 0}$, we have
		\begin{equation*}
		\|\sigma\|_{H_1(N_{a_1}(Y)\times N_\tau(t_0))}\leq C \|\sigma\|_{H_1(X\times[0,T])}^\nu\Big(\|Q\sigma\|_{L^2(X\times[0,T])}+\Big\|\frac{\partial \sigma}{\partial t}\Big\|_{L^2(N_a(Y)\times\{0\})}\Big)^{1-\nu}.
	\end{equation*}
\end{lemma}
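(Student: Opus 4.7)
The plan is to reduce the lemma to the Carleman estimate of Lemma~\ref{lemma:local3} by cutting off in the weight $\varphi_Y$, and then apply the elementary three-norm inequality of Lemma~\ref{lemma:interpolation}. Fix three thresholds
\[
0<c_0<c_1<c_*<1
\]
for the weight $\varphi_Y(x,t)=e^{-t-d(x,y)^{6}}$ such that
\begin{equation*}
N_{a_1}(Y)\times N_\tau(t_0)\subset\{\varphi_Y\geq c_*\}\quad\text{and}\quad\{\varphi_Y\geq c_0\}\subset N_a(Y)\times[0,a).
\end{equation*}
Such a choice exists because $\varphi_Y$ attains its maximum $1$ on $Y\times\{0\}$ and decays, while $a_1,\tau,t_0$ are small relative to $a$. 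Pick a smooth cutoff $\chi$ with $\chi\equiv 1$ on $\{\varphi_Y\geq c_1\}$ and $\operatorname{supp}\chi\subset\{\varphi_Y\geq c_0\}$, arranged near $X\times\{0\}$ so that $\chi\sigma$ falls in the class $\mathcal{F}_Y$ of Lemma~\ref{lemma:local3}.

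Apply Lemma~\ref{lemma:local3} to $g=\chi\sigma$ and decompose $Q(\chi\sigma)=\chi\,Q\sigma+[Q,\chi]\sigma$. The first-order commutator $[Q,\chi]$ is supported in the transition band $\{c_0\leq\varphi_Y\leq c_1\}$, where $e^{2\varphi_Y/h}\leq e^{2c_1/h}$; on the target region $e^{2\varphi_Y/h}\geq e^{2c_*/h}$; and on all of $\operatorname{supp}\chi$ one has $e^{2\varphi_Y/h}\leq e^{2/h}$. Dividing the resulting weighted inequality by $e^{2c_*/h}$ and absorbing $h$-polynomial prefactors yields, for every $h$ in some $(0,h_0)$,
\begin{equation*}
\|\sigma\|_{H^{1}(N_{a_1}(Y)\times N_\tau(t_0))}^{2}\leq e^{-p/h}\|\sigma\|_{H^{1}(X\times[0,T])}^{2}+e^{q/h}\Bigl(\|Q\sigma\|_{L^{2}(X\times[0,T])}+\|\partial_t\sigma\|_{L^{2}(N_a(Y)\times\{0\})}\Bigr)^{2},
\end{equation*}
with $p=2(c_*-c_1)>0$ and $q=2(1-c_*)>0$ depending only on the thresholds. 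The trivial bound $\alpha\leq A\beta$ required by Lemma~\ref{lemma:interpolation} follows from $N_{a_1}(Y)\times N_\tau(t_0)\subset X\times[0,T]$, so Lemma~\ref{lemma:interpolation} delivers $\alpha\leq C\beta^{\nu}\gamma^{1-\nu}$ with $\nu=q/(p+q)\in(0,1)$, and extracting square roots yields the claim.

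The main obstacle is arranging the Neumann boundary term $h^{-1}\int_{X\times\{0\}}|\partial_t(\chi\sigma)|^{2}$ produced by Lemma~\ref{lemma:local3} to be controlled by $\|\partial_t\sigma\|_{L^{2}(N_a(Y)\times\{0\})}$ rather than by $\|\sigma\|_{L^{2}(N_a(Y)\times\{0\})}$: simply forcing $\chi$ to vanish at $t=0$ would convert $\partial_t(\chi\sigma)|_{t=0}$ into $(\partial_t\chi|_{t=0})\sigma(\cdot,0)$, giving the wrong trace. The natural device is to first subtract from $\sigma$ a short-time Cauchy-data lift $\eta(t)\sigma(\cdot,0)$ with $\eta\in C_c^{\infty}([0,t_0/2))$, $\eta(0)=1$, $\eta'(0)=0$, so that $\widehat\sigma=\sigma-\eta\,\sigma(\cdot,0)$ satisfies $\widehat\sigma|_{t=0}=0$ and $\partial_t\widehat\sigma|_{t=0}=\partial_t\sigma|_{t=0}$; applying the Carleman inequality to $\chi\widehat\sigma$ (where $\chi$ need no longer vanish at $t=0$) produces the desired Neumann term, while the extra source $Q(\eta\sigma(\cdot,0))$ is supported in the fixed collar $X\times[0,t_0/2)$ and is absorbed into $\|\sigma\|_{H^{1}(X\times[0,T])}$ via a trace inequality. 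Bounded geometry ensures that the cutoff constants, the commutator estimates, and the trace constants can all be chosen uniformly in $y\in Y$.
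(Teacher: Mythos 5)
Your core argument is the same as the paper's: cut off along level sets of the weight $\varphi_Y$, apply the Carleman estimate of Lemma \ref{lemma:local3} to $\chi\sigma$, split $Q(\chi\sigma)=\chi\,Q\sigma+[Q,\chi]\sigma$ so that the commutator lives in a band where the weight is strictly smaller than on the target region, divide by the weight, and feed the resulting two-exponential inequality into Lemma \ref{lemma:interpolation}. Your choice of thresholds $c_0<c_1<c_*$ and the exponents $p=2(c_*-c_1)$, $q=2(1-c_*)$ are a correct (indeed cleaner) version of the paper's $b_1,b_2,b_3$ bookkeeping, and the reduction to Lemma \ref{lemma:interpolation} is exactly as in the paper.

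The genuine gap is in your final device for the $t=0$ boundary term. The subtraction $\widehat\sigma=\sigma-\eta(t)\sigma(\cdot,0)$ creates the source $Q(\eta\,\sigma(\cdot,0))=-\eta''(t)\,\sigma(\cdot,0)+\eta(t)\,P\sigma(\cdot,0)$, and the correction also enters $\|\widehat\sigma\|_{H^1(X\times[0,T])}$ through $\|\eta\,\sigma(\cdot,0)\|_{H^1}$. These terms involve one and two $X$-derivatives of the trace $\sigma(\cdot,0)$, and no trace inequality bounds them by $\|\sigma\|_{H^1(X\times[0,T])}$ (the trace of $H^1$ on $X\times\{0\}$ is only $H^{1/2}$), so they cannot be ``absorbed'' as claimed. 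In fact no repair within the stated norms is possible: take $\sigma(x,t)=u(x)$ with $Pu=0$ for a nonzero smooth $H^1$ section $u$ (for instance an $L^2$-harmonic spinor when $P=D^2$); then $Q\sigma=0$ and $\partial_t\sigma\equiv 0$, so the right-hand side of the claimed inequality vanishes while the left-hand side does not. The statement therefore needs the implicit hypothesis $\sigma|_{X\times\{0\}}=0$ — this is exactly what the paper uses when it asserts that $\chi\sigma$ lies in $\mathcal F_Y$ ``by assumption'', and it is satisfied by the only section to which the lemma is applied, namely $F_t$ with $F_0=0$ in the proof of Theorem \ref{thm:uniqueCont}. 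Once that hypothesis is added, your cutoff argument closes without any lift: with $\sigma(\cdot,0)=0$ one has $\partial_t(\chi\sigma)|_{t=0}=\chi\,\partial_t\sigma|_{t=0}$, so the Neumann term is controlled by $\|\partial_t\sigma\|_{L^2(N_a(Y)\times\{0\})}$ directly.
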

\begin{proof}
	For any $b>0$, we define
	$$\Omega_b=\{(x,t)\in N_a(Y)\times[0,a):\varphi_Y(x,t)\geq b\}.$$ Let $b_1<b_3<0$ such that $$H_1(N_{a_1}(Y)\times N_\tau(t_0))\subset \Omega_{b_1}\subset \Omega_{b_3}\subset X\times[0,T].$$
	We shall prove that there exists a constant $C>0$,
		\begin{equation}
		\|\sigma\|_{H_1(\Omega_{b_1})}\leq C \|\sigma\|_{H_1(\Omega_{b_3})}^\nu\Big(\|Q\sigma\|_{L^2(\Omega_{b_3})}+\Big\|\frac{\partial}{\partial t}\sigma\Big\|_{L^2(N_a(Y)\times\{0\})}\Big)^{1-\nu}.
	\end{equation}
	
	Let $\rho$ be a smooth non-increasing function on $\R$ such that $\rho(s)=1$ if $s\leq b_1$, and $\rho(s)=0$ if $s\geq b_3$.
%
	Set $\chi=\rho\circ\varphi_Y$. It is straightforward that $\nabla\chi$ is only supported on $\Omega_{b_3}-\Omega_{b_1}$. We fix $b_2\in({b_1},{b_3})$ such that $\rho(s)=1/2$. Let $g=\chi\sigma$, which lies in $\mathcal F_Y$ by assumption. 
	
	First, we consider the right-hand side of line \eqref{eq:local3}. Since $\frac{\partial}{\partial t}( \chi\sigma)=\chi\frac{\partial}{\partial t} \sigma+\frac{\partial\chi}{\partial t} \sigma$ and $\frac{\partial\chi}{\partial t}$ is only supported on $\Omega_{b_3}-\Omega_{b_1}$, we have
	\begin{equation*}
		\int_{X\times\R_{\geq 0}}\frac 1 he^{2\varphi_Y/h}\Big|\frac{\partial}{\partial t} (\chi\sigma)\Big|^2\geq \frac{1}{8h}e^{2b_2/h}\int_{\Omega_{b_2}}\Big|\frac{\partial}{\partial t} \sigma\Big|^2-\|\nabla\chi\|_\infty\frac 1 he^{2{b_1}/h}\int_{\Omega_{b_3}-\Omega_{b_1}}|\sigma|^2.
	\end{equation*}
	Similarly,
		\begin{equation*}
		\int_{X\times\R_{\geq 0}}\frac 1 he^{2\varphi_Y/h}|\nabla(\chi\sigma)|^2\geq \frac{1}{8h}e^{2b_2/h}\int_{\Omega_{b_2}}|\nabla \sigma|^2-\|\nabla\chi\|_\infty\frac 1 he^{2{b_1}/h}\int_{\Omega_{b_3}-\Omega_{b_1}}|\sigma|^2.
	\end{equation*}
	It is also clear that
	\begin{equation*}
		\int_{X\times\R_{\geq 0}}\frac{1}{h^3}e^{2\varphi_Y/h}|\chi\sigma|^2\geq \frac{1}{4h^3}e^{2b_2/h}\int_{\Omega_{b_2}}|\sigma|^2.
	\end{equation*}
	
	Secondly,  we consider the left-hand side of line \eqref{eq:local3}. It is clear that
	\begin{equation*}
		\frac{1}{h}\int_{X\times\{0\}}\Big|\frac{\partial}{\partial t}(\chi\sigma)\Big|^2\leq \frac{1}{h}\int_{N_{b_3}(Y)\times\{0\}}\Big|\frac{\partial}{\partial t}\sigma\Big|^2.
	\end{equation*}
	We note that $Q(\chi\sigma)=\chi(Q\sigma)+[Q,\chi]\sigma$, where $[Q,\chi]$ is a first-order differential operator that is supported on $\Omega_{b_3}-\Omega_{b_1}$. Therefore, there exists $c_1>0$ such that
	\begin{equation*}
		\begin{split}
			\int_{X\times\R_{\geq 0}}e^{2\varphi_Y/h}|Q(\chi\sigma)|^2\leq& \frac 1 2\int_{X\times\R_{\geq 0}}e^{2\varphi_Y/h}|\chi Q\sigma|^2+\int_{\Omega_{b_3}-\Omega_{{b_1}}}e^{2\varphi_Y/h}|[Q,\chi]\sigma|^2\\
			\leq &\frac1 2\|Q\sigma\|_{L^2(\Omega_{b_3})}+c_1e^{2{b_1}/h}\|\sigma\|^2_{H^1(\Omega_{b_3})}.
		\end{split}
	\end{equation*}
	Combining all the inequalities above, we reach that there exists $c_2>0$ such that
	\begin{equation*}
		e^{2b_2/h}\|\sigma\|_{H^1(\Omega_{b_2})}^2\leq c_2e^{2{b_1}/h}\|\sigma\|_{H^1(\Omega_{b_3})}^2+c_2\Big(\|Q\sigma\|_{L^2(\Omega_{b_3})}+\Big\|\frac{\partial}{\partial t}\sigma\Big\|_{L^2(N_a(Y)\times\{0\})}\Big).
	\end{equation*}
	Thus,
		\begin{equation*}
		\|\sigma\|_{H^1(\Omega_{{b_1}})}^2\leq c_2e^{2({b_1}-b_2)/h}\|\sigma\|_{H^1(\Omega_{b_3})}^2+c_2e^{-2b_2/h}\Big(\|Q\sigma\|_{L^2(\Omega_{b_3})}+\Big\|\frac{\partial}{\partial t}\sigma\Big\|_{L^2(N_a(Y)\times\{0\})}\Big).
	\end{equation*}
	In particular, there exists $h_0>0$ such that the above inequality holds uniformly for any $h\in(0,h_0)$. We emphasize that here ${b_1}-b_2<0$ and $-b_2>0$. Clearly $\|\sigma\|_{H^1(\Omega_{{b_1}})}\leq \|\sigma\|_{H^1(\Omega_{{b_3}})}$. This finishes the proof by applying Lemma \ref{lemma:interpolation}.
\end{proof}

Lemma \ref{lemma:interpolation1} shows that the $H^1$-norm of $\sigma$ on $H^1(N_{a_1}(Y)\times N_\tau(t_0))$ is bounded in the sense of interpolation. By the assumption in Definition \ref{def:net}, the $r_2$-neighborhood of $Y$ covers the entire $X$ for some $r_2>0$. We shall prove that the $H^1$-norm of $\sigma$ on $X\times N_\tau(t_0)$ is also bounded in the sense of interpolation, by increasing the radius $a_1$.

\begin{proposition}\label{prop:interpolation}
		Let $Y$ be a discrete net in $X$ and $a$ a small positive number. Given small positive numbers $\tau\ll t_0<T\ll a$, there exists $C>0$ and $\nu\in(0,1)$ such that for any smooth section $\sigma$ of $E$ over $X\times\R_{\geq 0}$, we have
	\begin{equation}\label{eq:interpolation}
	\begin{split}
			&\|\sigma\|_{H_1(X\times N_\tau(t_0))}\\\leq &C \|\sigma\|_{H_1(X\times[0,T])}^\nu\Big(\|Q\sigma\|_{L^2(X\times[0,T])}+\Big\|\frac{\partial}{\partial t}\sigma\Big\|_{L^2(N_a(Y)\times\{0\})}\Big)^{1-\nu}.
	\end{split}
	\end{equation}
\end{proposition}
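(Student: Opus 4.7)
The strategy is to extend the local interpolation of Lemma \ref{lemma:interpolation1} from the thin tube $N_{a_1}(Y)\times N_\tau(t_0)$ to the full slab $X\times N_\tau(t_0)$ by iteratively propagating the estimate outward, using the hypersurface Carleman estimate of Lemma \ref{lemma:localZ} as the propagation step and Lemma \ref{lemma:iteratedInterpolation} to compose the finitely many propagations. Since $Y$ is a net, there is some $r>0$ with $N_r(Y)=X$, so only finitely many outward steps are needed.

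Concretely, fix a small step size $\delta>0$ and set $N=\lceil (r-a_1)/\delta\rceil$. For $k=0,1,\ldots,N$, let
$$\Omega_k = N_{a_1+k\delta}(Y)\times N_\tau(t_0),\qquad \alpha_k = \|\sigma\|_{H^1(\Omega_k)}.$$
Put $\beta=\|\sigma\|_{H^1(X\times[0,T])}$ and $\gamma=\|Q\sigma\|_{L^2(X\times[0,T])}+\|\partial_t\sigma\|_{L^2(N_a(Y)\times\{0\})}$. Lemma \ref{lemma:interpolation1} is the base case: $\alpha_0\leq C\beta^{\nu_0}\gamma^{1-\nu_0}$. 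Since $\Omega_N\supseteq X\times N_\tau(t_0)$, it suffices to show that the iteration $\alpha_k\mapsto\alpha_{k+1}$ satisfies an interpolation bound of the form
$$\alpha_{k+1}\leq C'\beta^\nu(\alpha_k+\gamma)^{1-\nu}$$
with constants uniform in $k$; Lemma \ref{lemma:iteratedInterpolation} then yields $\alpha_N\leq C''\beta^\mu(\alpha_0+\gamma)^{1-\mu}$, and combining with the base case produces \eqref{eq:interpolation} after absorbing exponents using Lemma \ref{lemma:interpolation} one more time.

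For the propagation step, I would apply Lemma \ref{lemma:localZ} to the family $Z_k=\{\partial N_{a_1+k\delta}(y):y\in Y\}$, with $v_y$ the signed distance to $\partial N_{a_1+k\delta}(y)$ (positive outside). Choose a cutoff $\chi_k$ equal to $1$ on $\Omega_{k+1/2}$ and vanishing outside $\Omega_{k+1}$, so that the commutator $[Q,\chi_k]$ is supported in the thin shell $\Omega_{k+1}\setminus\Omega_{k+1/2}$, where the weight $\varphi_{Z_k}$ takes controlled values. Mimicking the localization argument of Lemma \ref{lemma:interpolation1} with Lemma \ref{lemma:localZ} in place of Lemma \ref{lemma:local3} produces, for all sufficiently small $h>0$,
$$\alpha_{k+1}^2 \leq C e^{-p/h}\beta^2 + C e^{q/h}(\alpha_k^2+\gamma^2),$$
after which Lemma \ref{lemma:interpolation} converts this into the required interpolation bound.

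The principal obstacle is making all constants in the propagation step uniform in $k$ and, more importantly, uniform across the (typically infinite) index set $Y$. This is exactly where the bounded geometry assumption enters: it guarantees that the distance functions $v_y$, the Carleman weights $\varphi_{Z_k}$, the cutoff functions $\chi_k$, and the lower-order remainders in Lemma \ref{lemma:localZ} have uniformly bounded derivatives across $y\in Y$, so the constants $C,p,q,\nu$ can be chosen independently of $y$ and $k$. A secondary point is that the balls $N_{a_1+k\delta}(y)$ overlap for different $y$, so the norm on $\Omega_k$ must be interpreted as an $\ell^2$-sum of local norms, with multiplicity controlled by bounded geometry; this accounting is routine but must be kept track of carefully.
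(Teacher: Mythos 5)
Your high-level strategy --- iterate outward from $N_{a_1}(Y)$ using hypersurface Carleman estimates, with Lemma \ref{lemma:interpolation1} as base case and Lemma \ref{lemma:iteratedInterpolation} to compose --- is exactly the paper's, and your remarks on uniformity and bounded geometry are the right ones. However, the way you invoke Lemma \ref{lemma:localZ} does not match its hypotheses, and this is not merely an imprecision but a genuine gap in the propagation step.

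First, you take the Carleman weight to be $v_y$, the signed distance to the whole sphere $\partial N_{a_1+k\delta}(y)$, dropping the localizing term $-d((x,t),(z_i,t_0))^6$ that the lemma requires. That term is what makes the sublevel sets $\{\varphi_Z\geq b\}$ small balls near $(z_i,t_0)$ and keeps the remainders $\mathscr R_j$ small, and its concavity is part of what gives the pseudoconvexity in \eqref{eq:[A,B]}; with $\varphi=-v_y$ alone this structure is gone. Second, and more fundamentally, your cutoff $\chi_k$ equals $1$ on all of $\Omega_{k+1/2}$ and vanishes outside $\Omega_{k+1}$, so $\chi_k\sigma$ is supported in the entire tube $\Omega_{k+1}$, not in a small neighborhood of $Z\times\{t_0\}$. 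Lemma \ref{lemma:localZ} is only stated for $g\in\mathcal F_Z$, which requires precisely that small-neighborhood support, so the inequality \eqref{eq:localZ} simply cannot be applied to $\chi_k\sigma$. The paper instead chooses tiny patches $Z_i\subset\partial N_{a_1}(y_i)$ with basepoints $z_i$, uses a bump cutoff supported near each $Z_i\times\{t_0\}$, and splits the commutator region into $\widehat\Omega_{\text{in}}$ (already controlled by the known bound on $N_{a_1}(Y)\times N_\tau(t_0)$) and $\widehat\Omega_{\text{out}}$ (where the local weight is $\leq b_1$, hence absorbable); the shell $N_{a_1+\varepsilon}(Y)\setminus N_{a_1}(Y)$ is then covered by finitely many such patch-neighborhoods, with the count controlled by bounded geometry. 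Your nested-ball cutoff produces no $\widehat\Omega_{\text{in}}$ term at all, so the accounting that closes the estimate is missing. The fix is to replace each step by a covering argument over small hypersurface patches, exactly as in the paper's proof of \eqref{eq:interpolationH1}.
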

\begin{proof}
	We shall prove that there exists $\varepsilon>0$ such that
		\begin{equation}\label{eq:interpolationH1}
		\begin{split}
			&\|\sigma\|_{H_1(N_{a_1+\varepsilon}(Y)\times N_\tau(t_0))}\\\leq &C_1 \|\sigma\|_{H_1(X\times[0,T])}^{\nu_1}\Big(\|Q\sigma\|_{L^2(X\times[0,T])}+\|\sigma\|_{H_1(N_{a_1}(Y)\times N_\tau(t_0))}\Big)^{1-\nu_1}.
		\end{split}
	\end{equation}
	for some $C_1>0$ and $\nu_1>0$. By Definition \ref{def:net}, there are only finitely many steps to exhaust $X$ from $Y$ by increasing $\varepsilon$ of the neighborhood of $Y$. Since $X$ has bounded geometry, Lemma \ref{lemma:interpolation1}, Lemma \ref{lemma:iteratedInterpolation} and line \eqref{eq:interpolationH1} together implies line \eqref{eq:interpolation}.
		\begin{figure}[h]
		\begin{tikzpicture}[scale=1]\label{fig}
			\node at (-7.3,0){$Z_i$};
			\node at (-5.6,-2.2){$\varphi_i=b_2$};
			\node at (-5.6,-2.2+.6){$\varphi_i=b_1$};
			\fill[gray,opacity=0.3] (3.943,-0.67) arc(-9.5:189.5:4) -- (-3,0) arc (180:0:3) -- cycle;
			\node at (0,3.5)[fill=white]{$\widehat\Omega_{\text{out}}$};
			\draw (0,0) circle[radius=0.5];
			\draw[very thick] (-7,0) -- (7,0);
			\fill (0,0) circle[radius=0.05];
			\draw (0,0) circle[radius=3];
			\draw (0,0) circle[radius=4];
			\draw[thick] (-5,-2) .. controls (0,1.5) .. (5,-2);
			\draw[thick] (-5,-2+.6) .. controls (0,1.5+.6) .. (5,-2+.6);
			\node at (0,0)[fill=white]{$z_i$};
			\path[clip] (3.943,-0.67) arc(-9.5:-170.5:4) -- (-3,0) arc (-180:0:3) -- cycle;
			\foreach \y in {0,-.2,...,-4}
			\draw (-4,\y) -- (4,\y);
			\node at (0,-3.5)[fill=white]{$\widehat\Omega_{\text{in}}$};
		\end{tikzpicture}
	\end{figure}
	
	We shall prove line \eqref{eq:interpolationH1} by applying Lemma \ref{lemma:localZ} with carefully chosen $Z$ and $\varphi_Z$. Once chosen, the rest of the proof is completely similar to the proof of Lemma \ref{lemma:interpolation}. 
	Given $N_{a_1}(Y)=\cup_i N_{a_1}(y_i)$, let $z_i$ be a point on $\partial N_{a_1}(y_i)$, $Z_i$ a tiny piece of $\partial N_{a_1}(y_i)$ near $z_i$, and $Z=\cup_iZ_i$. 
	Pick smooth functions $v_i$ supported near $Z_i$ such that $|\nabla v_i|=1$, $Z_i$ is the level set $\{v_i=0\}$, and $\nabla v_i$ is pointing outward from $Z_i\subset \partial N_{a_1}(Y)$. We consider the function $\varphi_Z$ on $X\times\R_{\geq 0}$ such that
	$$\varphi_Z(x,t)=-v_i-d((x,t),(z_i,t_0))^6$$
	near each $Z_i$. 
	
	Let $\chi$ be a smooth cut-off function that is equal to $1$ on $N_{\varepsilon_1}(Z\times\{t_0\})$ and equal to $0$ outside $N_{\varepsilon_2}(Z\times\{t_0\})$. Denote $\widehat\Omega=N_{\varepsilon_2}(Z\times\{t_0\})-N_{\varepsilon_1}(Z\times\{t_0\})$, which contains the supported of $\nabla\chi$.
	Similar to the proof of Lemma \ref{lemma:interpolation}, we define
	$$\Omega_b=\{(x,t)\in N_{\varepsilon_1}(Z\times\{t_0\}):\varphi_Z(x,t)\geq b\}.$$
	By construction of $\varphi_Z$, we have $\varphi_Z(z_i,t_0)=0$, and there exists $b_1<0$ such that $\Omega_{b_1}\cap\widehat\Omega$ is contained inside $N_{a_1}(Y)\times N_\tau(t_0)$. Pick $b_2,b_3$ with $b_1<b_2<0<b_3$ and $\varepsilon>0$ such that $N_\varepsilon(Z)\times N_\tau(t_0)\subset \Omega_{b_2}$ and $N_{\varepsilon_2}(Z\times\{t_0\})\subset (\Omega_{b_3})^c$. 
	
	Now we consider the section $\chi\sigma$, which lies in $\mathcal F_Z$ by assumption, hence satisfies the inequality in Lemma \ref{lemma:localZ}. Similar to the computation in the proof of Lemma \ref{lemma:interpolation1}, the right-hand side of line \eqref{eq:localZ} satisfies the following:
	\begin{align*}
	\int_{X\times\R_{\geq 0}}\frac 1 he^{2\varphi_Z/h}\Big(|\frac{\partial}{\partial t} (\chi\sigma)|^2+|\nabla (\chi\sigma)|^2\Big)\geq&\int_{N_\varepsilon(Z)\times N_\tau(t_0)}\frac 1 he^{2\varphi_Z/h}\Big(|\frac{\partial}{\partial t} (\chi\sigma)|^2+|\nabla (\chi\sigma)|^2\Big)\\
	\geq &\frac 1 he^{2b_2/h}\int_{N_\varepsilon(Z)\times N_\tau(t_0)}\Big(|\frac{\partial}{\partial t} (\chi\sigma)|^2+|\nabla (\chi\sigma)|^2\Big),
	\end{align*}
	and
	\begin{align*}
		\int_{X\times\R_{\geq 0}}\frac{1}{h^3}e^{2\varphi_Z/h}|\chi\sigma|^2\geq \frac{1}{h^3}e^{2b_2/h}\int_{N_\varepsilon(Z)\times N_\tau(t_0)}|\sigma|^2.
	\end{align*}	
	For the left-hand side of line \eqref{eq:localZ}, we still notice that 
	$$Q(\chi\sigma)=\chi Q\sigma+[Q,\chi]\sigma,$$
	where $[Q,\chi]$ is a first-order differential operator that is supported only on $\widehat\Omega$. Write $\widehat\Omega=\widehat\Omega_{\text{in}}\cup \widehat\Omega_{\text{out}}$, where $\widehat\Omega_{\text{in}}\coloneqq \Omega_{b_1}\cap\widehat\Omega$ and $\widehat\Omega_{\text{out}}$ is the complement of $\widehat\Omega_{\text{in}}$. We note that by construction, $\widehat\Omega_{\text{in}}$ is contained in $N_{a_1}(Y)\times N_\tau(t_0)$, while on  $\widehat\Omega_{\text{out}}$ we have $\varphi_Z\leq b_1$. Therefore, there exists $c_1>0$ such that
	\begin{align*}
		&\int_{X\times\R_{\geq 0}}e^{2\varphi_Z/h}|Qg|^2\leq \frac 1 2\int_{N_{\varepsilon_1}(Z\times\{t_0\})}e^{2\varphi_Z/h}|\chi Q\sigma|^2+\int_{\widehat\Omega}e^{2\varphi_Z/h}|[Q,\chi]\sigma|^2\\
	\leq &\frac1 2e^{2b_3/h}\int_{L^2(N_{\varepsilon_1}(Z\times\{t_0\}))}|Q\sigma|^2+\int_{\widehat\Omega_{\text{in}}}e^{2\varphi_Z/h}|[Q,\chi]\sigma|^2+\int_{\widehat\Omega_{\text{out}}}e^{2\varphi_Z/h}|[Q,\chi]\sigma|^2\\
	\leq &\frac1 2e^{2b_3/h}\|Q\sigma\|^2_{L^2(X\times[0,T])}+c_1e^{2b_3/h}\|\sigma\|^2_{H_1(N_{a_1}(Y)\times N_\tau(t_0))}+c_1e^{2b_1/h}\|\sigma\|^2_{H^1(X\times[0,T])}
	\end{align*}
	Therefore, by Lemma \ref{lemma:localZ}, there is $c_2>0$ such that
	\begin{align*}
		&e^{2b_2/h}\|\sigma\|_{H^1(N_\varepsilon(Z)\times N_\tau(t_0))}^2\\ \leq& c_2\Big(e^{2b_3/h}\|Q\sigma\|^2_{L^2(X\times[0,T])}+e^{2b_3/h}\|\sigma\|^2_{H_1(N_{a_1}(Y)\times N_\tau(t_0))}+e^{2b_1/h}\|\sigma\|^2_{H^1(X\times[0,T])}\Big).
	\end{align*}
	Equivalently,
	\begin{align*}
		&\|\sigma\|_{H^1(N_\varepsilon(Z)\times N_\tau(t_0))}^2\\ \leq& c_2e^{2(b_1-b_2)/h}\|\sigma\|^2_{H^1(X\times[0,T])}+c_2e^{2(b_3-b_2)/h}\Big(\|Q\sigma\|^2_{L^2(X\times[0,T])}+\|\sigma\|^2_{H_1(N_{a_1}(Y)\times N_\tau(t_0))}\Big).
	\end{align*}
	We note that $b_1-b_2<0$ and $b_3-b_2>0$. It follows together with Lemma \ref{lemma:interpolation} that
	\begin{align*}
		\|\sigma\|_{H^1(N_\varepsilon(Z)\times N_\tau(t_0))}\leq c_3\|\sigma\|^{\nu_1}_{H^1(X\times[0,T])}\Big(\|Q\sigma\|^2_{L^2(X\times[0,T])}+\|\sigma\|^2_{H_1(N_{a_1}(Y)\times N_\tau(t_0))}\Big)^{1-\nu_1}.
	\end{align*}
	for some $c_3>0$ and $\nu_1>0$. Note that as $X$ has bounded geometry $N_{a_1+\varepsilon}(Y)$ is covered by at most $N$ sets, which are of the form $N_\varepsilon(Z)$ for some $Z\subset\partial N_{a_1}(Y)$. This finishes the proof of line \eqref{eq:interpolationH1} with $C_3=Nc_1$, hence completes the proof of line \eqref{eq:interpolation} by the discussion at the beginning.	
\end{proof}

Finally, we are ready to prove Theorem \ref{thm:uniqueCont}.

\begin{proof}
 The G\r{a}rding inequality implies that
	\begin{equation}
		\langle P\psi,\psi\rangle\geq c\|\nabla\psi\|^2-c'\|\psi\|^2,
	\end{equation}
	for any $L^2$-section $\psi$. Thus, we obtain that $P+c'\geq 0$. 
	Without loss of generality, we may assume that $P\geq 0$.	
	
	Given $\sigma\in V_\lambda$, we define
	$$F_t=\frac{\sinh(t\sqrt{P})}{\sqrt P}\sigma=\frac{e^{t\sqrt{P}}-e^{-t\sqrt{P}}}{2\sqrt{P}}\sigma=\sum_{n=0}^{\infty}\frac{t^{2n+1}}{(2n+1)!}P^n\sigma.$$
	It is clear from the definition that
	$$\frac{\partial}{\partial t}F_t\Big|_{t=0}=\sigma,\text{ and }QF_t=\Big(-\frac{\partial^2}{\partial t^2}+P\Big)F_t=0.$$
	Together with Proposition \ref{prop:interpolation}, we obtain that
	\begin{equation}\label{eq:interpolationFt}
			\|F_t\|_{H_1(X\times (t_0-\tau,t_0+\tau))}\leq C \|F_t\|_{H_1(X\times[0,T])}^\nu\|\sigma\|_{L^2(N_a(Y))}^{1-\nu}.
	\end{equation}
	
	Moreover, the construction of $F_t$ directly implies that, for any $t\in[0,T]$,
	$$\|F_t\|\leq \frac{\sinh(t\sqrt\lambda)}{\sqrt\lambda}\|\sigma\|,~\langle PF_t,F_t\rangle\leq \sqrt{\lambda}\sinh(t\sqrt{\lambda})\|\sigma\|,$$
	and
	$$\|\sigma\|\leq \Big\|\frac{\partial}{\partial t}F_t\Big\|\leq \cosh(t\sqrt{\lambda})\|\sigma\|.$$
	Therefore,
	\begin{align*}
		\|F_t\|_{H^1(M\times[0,T])}^2\leq&\int_0^T\Big((1+\frac{c'}{c})\|F_t\|^2+\Big\|\frac{\partial}{\partial t}F_t\Big\|^2+\frac 1 c\langle PF_t,F_t\rangle\Big) dt\\
		\leq &T\Big((1+\frac{c'}{c}\frac{\sinh^2(T\sqrt\lambda)}{\lambda}+\cosh^2(T\sqrt{\lambda}))+\frac 1 c\lambda\sinh^2(T\lambda)\Big)\|\sigma\|^2\\
		=&C_\lambda^{\frac{2(1-\nu)}{\nu}}\big(\frac{2\tau}{C^2}\big)^{\frac{1}{\nu}}\|\sigma\|^2
	\end{align*}
	Here, $$C_\lambda=\big(\frac{C^2}{2\tau}\big)^{\frac{1}{2(1-\nu)}}T^{\frac{\nu}{2(1-\nu)}}\Big((1+\frac{c'}{c}\frac{\sinh^2(T\sqrt\lambda)}{\lambda}+\cosh^2(T\sqrt{\lambda}))+\frac 1 c\lambda\sinh^2(T\lambda)\Big)^{\frac{\nu}{2(1-\nu)}}.$$
	By the G\r{a}rding inequality, we have that
	$$\|F_t\|_{H_1(X\times (t_0-\tau,t_0+\tau))}\geq \int_{t_0-\tau}^{t_0+\tau}\Big\|\frac{\partial}{\partial t}F_t\Big\|^2 dt\geq 2\tau \|\sigma\|^2.$$
	Thus we obtain from line \eqref{eq:interpolationFt} that
	{
	$$2\tau \|\sigma\|^2\leq C^2 \Big(C_\lambda^{\frac{2(1-\nu)}{\nu}}\big(\frac{2\tau}{C^2}\big)^{\frac{1}{\nu}}\|\sigma\|^2\Big)^\nu\|\sigma\|_{L^2(N_a(Y))}^{2-2\nu}.$$	}
	A direct simplification indicates that 
	$$\|\sigma\|_{L^2(X)}\leq C_\lambda\|\sigma\|_{L^2(N_a(Y))}.$$

\end{proof}

\begin{bibdiv}
	\begin{biblist}
		
		\bib{MR0996437}{incollection}{
			author={Baum, Paul},
			author={Connes, Alain},
			title={{$K$}-theory for discrete groups},
			date={1988},
			booktitle={Operator algebras and applications, {V}ol.\ 1},
			series={London Math. Soc. Lecture Note Ser.},
			volume={135},
			publisher={Cambridge Univ. Press, Cambridge},
			pages={1\ndash 20},
			url={https://doi.org/10.1007/978-1-4612-3762-4_1},
			review={\MR{996437}},
		}
		
		\bib{BaumConnesHigson}{incollection}{
			author={Baum, Paul},
			author={Connes, Alain},
			author={Higson, Nigel},
			title={Classifying space for proper actions and {$K$}-theory of group
				{$C^\ast$}-algebras},
			date={1994},
			booktitle={{$C^\ast$}-algebras: 1943--1993 ({S}an {A}ntonio, {TX}, 1993)},
			series={Contemp. Math.},
			volume={167},
			publisher={Amer. Math. Soc., Providence, RI},
			pages={240\ndash 291},
			url={https://doi.org/10.1090/conm/167/1292018},
			review={\MR{1292018}},
		}
		
		\bib{spinorialapproach}{book}{
			author={Bourguignon, Jean-Pierre},
			author={Hijazi, Oussama},
			author={Milhorat, Jean-Louis},
			author={Moroianu, Andrei},
			author={Moroianu, Sergiu},
			title={A spinorial approach to {R}iemannian and conformal geometry},
			series={EMS Monographs in Mathematics},
			publisher={European Mathematical Society (EMS), Z\"{u}rich},
			date={2015},
			ISBN={978-3-03719-136-1},
			url={https://doi.org/10.4171/136},
			review={\MR{3410545}},
		}
		
		\bib{CGH00}{article}{
			author={Calderbank, David M.~J.},
			author={Gauduchon, Paul},
			author={Herzlich, Marc},
			title={Refined {K}ato inequalities and conformal weights in {R}iemannian
				geometry},
			date={2000},
			ISSN={0022-1236,1096-0783},
			journal={J. Funct. Anal.},
			volume={173},
			number={1},
			pages={214\ndash 255},
			url={https://doi.org/10.1006/jfan.2000.3563},
			review={\MR{1760284}},
		}
		
		\bib{MR3325537}{article}{
			author={Chen, Xiaoman},
			author={Wang, Qin},
			author={Yu, Guoliang},
			title={The coarse {N}ovikov conjecture and {B}anach spaces with
				{P}roperty ({H})},
			date={2015},
			ISSN={0022-1236,1096-0783},
			journal={J. Funct. Anal.},
			volume={268},
			number={9},
			pages={2754\ndash 2786},
			url={https://doi.org/10.1016/j.jfa.2015.02.001},
			review={\MR{3325537}},
		}
		
		\bib{Cheng_eigenvalue}{article}{
			author={Cheng, Shiu~Yuen},
			title={Eigenvalue comparison theorems and its geometric applications},
			date={1975},
			ISSN={0025-5874,1432-1823},
			journal={Math. Z.},
			volume={143},
			number={3},
			pages={289\ndash 297},
			url={https://doi.org/10.1007/BF01214381},
			review={\MR{378001}},
		}
		
		\bib{ConnesMoscovici}{article}{
			author={Connes, Alain},
			author={Moscovici, Henri},
			title={Cyclic cohomology, the {N}ovikov conjecture and hyperbolic
				groups},
			date={1990},
			ISSN={0040-9383},
			journal={Topology},
			volume={29},
			number={3},
			pages={345\ndash 388},
			url={https://doi.org/10.1016/0040-9383(90)90003-3},
			review={\MR{1066176}},
		}
		
		\bib{Davaux}{article}{
			author={Davaux, H\'el\`ene},
			title={An optimal inequality between scalar curvature and spectrum of
				the {L}aplacian},
			date={2003},
			ISSN={0025-5831,1432-1807},
			journal={Math. Ann.},
			volume={327},
			number={2},
			pages={271\ndash 292},
			url={https://doi.org/10.1007/s00208-003-0451-8},
			review={\MR{2015070}},
		}
		
		\bib{MR1791141}{article}{
			author={Gong, Guihua},
			author={Yu, Guoliang},
			title={Volume growth and positive scalar curvature},
			date={2000},
			ISSN={1016-443X,1420-8970},
			journal={Geom. Funct. Anal.},
			volume={10},
			number={4},
			pages={821\ndash 828},
			url={https://doi.org/10.1007/PL00001639},
			review={\MR{1791141}},
		}
		
		\bib{MR4268302}{article}{
			author={Gong, Sherry},
			author={Wu, Jianchao},
			author={Yu, Guoliang},
			title={The {N}ovikov conjecture, the group of volume preserving
				diffeomorphisms and {H}ilbert-{H}adamard spaces},
			date={2021},
			ISSN={1016-443X,1420-8970},
			journal={Geom. Funct. Anal.},
			volume={31},
			number={2},
			pages={206\ndash 267},
			url={https://doi.org/10.1007/s00039-021-00563-7},
			review={\MR{4268302}},
		}
		
		\bib{GromovLawson}{article}{
			author={Gromov, Mikhael},
			author={Lawson, H.~Blaine, Jr.},
			title={Positive scalar curvature and the {D}irac operator on complete
				{R}iemannian manifolds},
			date={1983},
			ISSN={0073-8301},
			journal={Inst. Hautes \'{E}tudes Sci. Publ. Math.},
			number={58},
			pages={83\ndash 196 (1984)},
			url={http://www.numdam.org/item?id=PMIHES_1983__58__83_0},
			review={\MR{720933}},
		}
		
		\bib{MR2217050}{article}{
			author={Guentner, Erik},
			author={Higson, Nigel},
			author={Weinberger, Shmuel},
			title={The {N}ovikov conjecture for linear groups},
			date={2005},
			ISSN={0073-8301,1618-1913},
			journal={Publ. Math. Inst. Hautes \'Etudes Sci.},
			number={101},
			pages={243\ndash 268},
			url={https://doi.org/10.1007/s10240-005-0030-5},
			review={\MR{2217050}},
		}
		
		\bib{MR4855340}{article}{
			author={Hao, Tianze},
			author={Shi, Yuguang},
			author={Sun, Yukai},
			title={Llarull type theorems on complete manifolds with positive scalar
				curvature},
			date={2024},
			ISSN={0002-9947,1088-6850},
			journal={Trans. Amer. Math. Soc.},
			volume={377},
			number={10},
			pages={7403\ndash 7420},
			url={https://doi.org/10.1090/tran/9249},
			review={\MR{4855340}},
		}
		
		\bib{MR1821144}{article}{
			author={Higson, Nigel},
			author={Kasparov, Gennadi},
			title={{$E$}-theory and {$KK$}-theory for groups which act properly and
				isometrically on {H}ilbert space},
			date={2001},
			ISSN={0020-9910},
			journal={Invent. Math.},
			volume={144},
			number={1},
			pages={23\ndash 74},
			url={https://doi.org/10.1007/s002220000118},
			review={\MR{1821144}},
		}
		
		\bib{HormanderIV}{book}{
			author={H\"ormander, Lars},
			title={The analysis of linear partial differential operators. {IV}},
			series={Classics in Mathematics},
			publisher={Springer-Verlag, Berlin},
			date={2009},
			ISBN={978-3-642-00117-8},
			url={https://doi.org/10.1007/978-3-642-00136-9},
			note={Fourier integral operators, Reprint of the 1994 edition},
			review={\MR{2512677}},
		}
		
		\bib{GK88}{article}{
			author={Kasparov, G.~G.},
			title={Equivariant {$KK$}-theory and the {N}ovikov conjecture},
			date={1988},
			ISSN={0020-9910,1432-1297},
			journal={Invent. Math.},
			volume={91},
			number={1},
			pages={147\ndash 201},
			url={https://doi.org/10.1007/BF01404917},
			review={\MR{918241}},
		}
		
		\bib{MR1998480}{article}{
			author={Kasparov, Gennadi},
			author={Skandalis, Georges},
			title={Groups acting properly on ``bolic'' spaces and the {N}ovikov
				conjecture},
			date={2003},
			ISSN={0003-486X,1939-8980},
			journal={Ann. of Math. (2)},
			volume={158},
			number={1},
			pages={165\ndash 206},
			url={https://doi.org/10.4007/annals.2003.158.165},
			review={\MR{1998480}},
		}
		
		\bib{Kazdan}{article}{
			author={Kazdan, Jerry~L.},
			title={Deformation to positive scalar curvature on complete manifolds},
			date={1982},
			ISSN={0025-5831,1432-1807},
			journal={Math. Ann.},
			volume={261},
			number={2},
			pages={227\ndash 234},
			url={https://doi.org/10.1007/BF01456220},
			review={\MR{675736}},
		}
		
		\bib{MR3041662}{article}{
			author={Le~Rousseau, J\'er\^ome},
			author={Lebeau, Gilles},
			title={On {C}arleman estimates for elliptic and parabolic operators.
				{A}pplications to unique continuation and control of parabolic equations},
			date={2012},
			ISSN={1292-8119,1262-3377},
			journal={ESAIM Control Optim. Calc. Var.},
			volume={18},
			number={3},
			pages={712\ndash 747},
			url={https://doi.org/10.1051/cocv/2011168},
			review={\MR{3041662}},
		}
		
		\bib{MR1362555}{incollection}{
			author={Lebeau, G.},
			author={Robbiano, L.},
			title={Contr\^ole exacte de l'\'equation de la chaleur},
			date={1995},
			booktitle={S\'eminaire sur les \'equations aux {D}\'eriv\'ees {P}artielles,
				1994--1995},
			publisher={\'Ecole Polytech., Palaiseau},
			pages={Exp. No. VII, 13},
			review={\MR{1362555}},
		}
		
		\bib{Li_geometric_analysis}{book}{
			author={Li, Peter},
			title={Geometric analysis},
			series={Cambridge Studies in Advanced Mathematics},
			publisher={Cambridge University Press, Cambridge},
			date={2012},
			volume={134},
			ISBN={978-1-107-02064-1},
			url={https://doi.org/10.1017/CBO9781139105798},
			review={\MR{2962229}},
		}
		
		\bib{Li_Wang_positive_spectrum_1}{article}{
			author={Li, Peter},
			author={Wang, Jiaping},
			title={Complete manifolds with positive spectrum},
			date={2001},
			ISSN={0022-040X,1945-743X},
			journal={J. Differential Geom.},
			volume={58},
			number={3},
			pages={501\ndash 534},
			url={http://projecteuclid.org/euclid.jdg/1090348357},
			review={\MR{1906784}},
		}
		
		\bib{Li_Wang_positive_spectrum_2}{article}{
			author={Li, Peter},
			author={Wang, Jiaping},
			title={Complete manifolds with positive spectrum. {II}},
			date={2002},
			ISSN={0022-040X,1945-743X},
			journal={J. Differential Geom.},
			volume={62},
			number={1},
			pages={143\ndash 162},
			url={http://projecteuclid.org/euclid.jdg/1090425532},
			review={\MR{1987380}},
		}
		
		\bib{MunteanuWang}{article}{
			author={Munteanu, Ovidiu},
			author={Wang, Jiaping},
			title={Bottom spectrum of three-dimensional manifolds with scalar
				curvature lower bound},
			date={2024},
			ISSN={0022-1236,1096-0783},
			journal={J. Funct. Anal.},
			volume={287},
			number={2},
			pages={Paper No. 110457, 41},
			url={https://doi.org/10.1016/j.jfa.2024.110457},
			review={\MR{4736650}},
		}
		
		\bib{QiaoRoe}{article}{
			author={Qiao, Yu},
			author={Roe, John},
			title={On the localization algebra of {G}uoliang {Y}u},
			date={2010},
			ISSN={0933-7741},
			journal={Forum Math.},
			volume={22},
			number={4},
			pages={657\ndash 665},
			url={https://doi.org/10.1515/FORUM.2010.036},
			review={\MR{2661442}},
		}
		
		\bib{Roe}{book}{
			author={Roe, John},
			title={Index theory, coarse geometry, and topology of manifolds},
			series={CBMS Regional Conference Series in Mathematics},
			publisher={Published for the Conference Board of the Mathematical Sciences,
				Washington, DC; by the American Mathematical Society, Providence, RI},
			date={1996},
			volume={90},
			ISBN={0-8218-0413-8},
			url={https://doi.org/10.1090/cbms/090},
			review={\MR{1399087}},
		}
		
		\bib{MR720934}{article}{
			author={Rosenberg, Jonathan},
			title={{$C^{\ast} $}-algebras, positive scalar curvature, and the
				{N}ovikov conjecture},
			date={1983},
			ISSN={0073-8301},
			journal={Inst. Hautes \'{E}tudes Sci. Publ. Math.},
			number={58},
			pages={197\ndash 212 (1984)},
			url={http://www.numdam.org/item?id=PMIHES_1983__58__197_0},
			review={\MR{720934}},
		}
		
		\bib{MR3275037}{article}{
			author={Wang, Hang},
			title={{$L^2$}-index formula for proper cocompact group actions},
			date={2014},
			ISSN={1661-6952,1661-6960},
			journal={J. Noncommut. Geom.},
			volume={8},
			number={2},
			pages={393\ndash 432},
			url={https://doi.org/10.4171/JNCG/160},
			review={\MR{3275037}},
		}
		
		\bib{WangXie25}{article}{
			author={Wang, Jinmin},
			author={Xie, Zhizhang},
			title={Scalar curvature rigidity of degenerate warped product spaces},
			date={2025},
			ISSN={2330-0000},
			journal={Trans. Amer. Math. Soc. Ser. B},
			volume={12},
			pages={1\ndash 37},
			url={https://doi.org/10.1090/btran/206},
			review={\MR{4853305}},
		}
		
		\bib{wang2024fillingradiusquantitativektheory}{article}{
			author={Wang, Jinmin},
			author={Xie, Zhizhang},
			author={Yu, Guoliang},
			author={Zhu, Bo},
			title={Filling radius, quantitative {K}-theory and positive scalar
				curvature},
			date={2024},
			eprint={2311.15347},
			url={https://arxiv.org/abs/2311.15347},
		}
		
		\bib{MR1879228}{article}{
			author={Wang, Xiaodong},
			title={The mass of asymptotically hyperbolic manifolds},
			date={2001},
			ISSN={0022-040X,1945-743X},
			journal={J. Differential Geom.},
			volume={57},
			number={2},
			pages={273\ndash 299},
			url={http://projecteuclid.org/euclid.jdg/1090348112},
			review={\MR{1879228}},
		}
		
		\bib{Wang_eigenvalue}{article}{
			author={Wang, Xiaodong},
			title={Harmonic functions, entropy, and a characterization of the
				hyperbolic space},
			date={2008},
			ISSN={1050-6926,1559-002X},
			journal={J. Geom. Anal.},
			volume={18},
			number={1},
			pages={272\ndash 284},
			url={https://doi.org/10.1007/s12220-007-9001-z},
			review={\MR{2365675}},
		}
		
		\bib{willett2020higher}{book}{
			author={Willett, Rufus},
			author={Yu, Guoliang},
			title={Higher index theory},
			series={Cambridge Studies in Advanced Mathematics},
			publisher={Cambridge University Press, Cambridge},
			date={2020},
			volume={189},
			ISBN={978-1-108-49106-8},
			url={https://doi.org/10.1017/9781108867351},
			review={\MR{4411373}},
		}
		
		\bib{Yulocalization}{article}{
			author={Yu, Guoliang},
			title={Localization algebras and the coarse {B}aum-{C}onnes conjecture},
			date={1997},
			ISSN={0920-3036},
			journal={$K$-Theory},
			volume={11},
			number={4},
			pages={307\ndash 318},
			url={https://doi.org/10.1023/a:1007766031161},
			review={\MR{1451759}},
		}
		
		\bib{Yu}{article}{
			author={Yu, Guoliang},
			title={The {N}ovikov conjecture for groups with finite asymptotic
				dimension},
			date={1998},
			ISSN={0003-486X},
			journal={Ann. of Math. (2)},
			volume={147},
			number={2},
			pages={325\ndash 355},
			url={https://doi.org/10.2307/121011},
			review={\MR{1626745}},
		}
		
		\bib{Yucoarseembed}{article}{
			author={Yu, Guoliang},
			title={The coarse {B}aum-{C}onnes conjecture for spaces which admit a
				uniform embedding into {H}ilbert space},
			date={2000},
			ISSN={0020-9910},
			journal={Invent. Math.},
			volume={139},
			number={1},
			pages={201\ndash 240},
			url={https://doi.org/10.1007/s002229900032},
			review={\MR{1728880}},
		}
		
	\end{biblist}
\end{bibdiv}


\end{document}